\documentclass{amsart}
\usepackage{hyperref, amssymb, xypic, amsmath, mathtools, amsthm, enumerate}

\DeclareMathOperator{\Proj}{Proj}
\DeclareMathOperator{\Sym}{Sym}
\DeclareMathOperator{\diag}{diag}
\DeclareMathOperator{\tr}{tr}
\DeclareMathOperator{\adj}{adj}
\DeclareMathOperator{\Aut}{Aut}
\DeclareMathOperator{\colim}{colim}
\DeclareMathOperator{\Func}{Func}
\DeclareMathOperator{\Stone}{Stone}
\DeclareMathOperator{\Clopen}{Clopen}
\DeclareMathOperator{\Spec}{Spec}
\DeclareMathOperator{\Cont}{Cont}
\DeclareMathOperator{\RP}{RP}
\DeclareMathOperator{\Rann}{R}
\DeclareMathOperator{\inter}{int}
\DeclareMathOperator{\range}{range}
\DeclareMathOperator{\cl}{cl}
\DeclareMathOperator{\ActiveProj}{AProj}

\newcommand{\AWstar}{\cat{AWstar}}
\newcommand{\Wstar}{\cat{Wstar}}
\newcommand{\cAWstar}{\cat{cAWstar}}
\newcommand{\PAWstar}{\cat{pAWstar}}
\newcommand{\cOML}{\cat{COrtho}}
\newcommand{\CBoolean}{\cat{CBool}}
\newcommand{\PCBoolean}{\cat{pCBool}}
\newcommand{\Group}{\cat{Group}}
\newcommand{\PGroup}{\cat{pGroup}}
\newcommand{\cAL}{\cat{cActive}}
\newcommand{\AL}{\cat{Active}}
\newcommand{\EAWstar}{\cat{eAWstar}}
\newcommand{\cat}[1]{\ensuremath{\mathbf{#1}}}
\newcommand{\generated}[2]{\ensuremath{#1 \langle #2 \rangle}}

\newcommand{\op}{\ensuremath{^\mathrm{op}}}
\newcommand{\cC}{\ensuremath{\mathcal{C}}}
\newcommand{\commeas}{\ensuremath{\odot}}
\newcommand{\C}{\mathbb{C}}
\newcommand{\M}{\mathbb{M}}
\newcommand{\R}{\mathbb{R}}
\renewcommand{\Re}{\ensuremath{\mathrm{Re}}}
\newcommand{\ie}{\textit{i.e.}}

\numberwithin{equation}{section}
\theoremstyle{plain}
\newtheorem{theorem}[equation]{Theorem}
\newtheorem*{theorem*}{Theorem}
\newtheorem{lemma}[equation]{Lemma}
\newtheorem{proposition}[equation]{Proposition}
\newtheorem{corollary}[equation]{Corollary}
\newtheorem*{corollary*}{Corollary}
\theoremstyle{definition}
\newtheorem{definition}[equation]{Definition}
\newtheorem{example}[equation]{Example}
\newtheorem{remark}[equation]{Remark}

\begin{document}

\title{Active lattices determine AW*-algebras}
\author{Chris Heunen}
\address{Department of Computer Science, University of Oxford, \\ Wolfson
  Building, Parks Road, OX1 3QD, Oxford, UK}
\email{heunen@cs.ox.ac.uk}
\author{Manuel L. Reyes}
\address{Department of Mathematics, Bowdoin College, \\
  8600 College Station, Brunswick, ME 04011, USA}
\email{reyes@bowdoin.edu}
\date{\today}

\begin{abstract}
  We prove that operator algebras that have enough projections are completely determined by those projections, their symmetries,
  and the action of the latter on the former.  
  This includes all von Neumann algebras and all AW*-algebras.
  We introduce \emph{active lattices}, which are formed from these three
  ingredients. More generally, we prove that the category of AW*-algebras is
  equivalent to a full subcategory of active lattices. Crucial ingredients are an
  equivalence between the 
  category of piecewise AW*-algebras and that of piecewise complete
  Boolean algebras, and a refinement of the piecewise algebra
  structure of an AW*-algebra that enables recovering its total
  structure.  
\end{abstract}

\keywords{AW*-algebra, active lattice, piecewise complete Boolean algebra, projection, symmetry, unitary action, category}
\subjclass[2010]{46M15, 46L10, 06C15, 05E18}

\maketitle

\section{Introduction}
\label{sec:introduction}

Operator algebras play a major role in modern functional analysis and
mathematical physics, particularly algebras with an ample supply
of projections. Such algebras 
display a rich interplay between their algebraic structure, the order-theoretic
structure of their projections, the group-theoretic structure of their
unitaries, and their various topological structures. It is therefore
natural to wonder to what extent one of these aspects determines the
others. We will consider algebras for whom operator topologies play
a minor role, and focus on the
other facets; specifically, we work with AW*-algebras, which include all von Neumann algebras.
Such algebras are not completely determined by the
group-theoretic structure of their unitaries: for example, $U(A) \cong
U(A\op)$, but $A \not\cong A\op$ in general~\cite{connes:factor}.
Adding the order-theoretic structure of their projections does not
suffice to reconstruct the algebra either: again, $\Proj(A)
\cong \Proj(A\op)$.
Closely related to projections is the structure of the normal part
$N(A)$ of $A$ as a
\emph{piecewise\footnote{We prefer the terminology `piecewise algebra' 
over the traditional `partial algebra', because of the unfortunate
conjunction `partial complete Boolean algebra'.} algebra} (see~\cite{vdbergheunen:colim}). Roughly, these are
algebras where one can only add or multiply commuting elements. 
But adding this structure is still not enough to determine the
algebra, since $N(A)$ and $N(A\op)$ are isomorphic as piecewise algebras.
It follows from our main result that taking into account one final
ingredient does suffice to completely determine the algebra structure, namely the
action by conjugation of the unitaries on the projections. Thus we
answer the following preserver problem. 

\begin{corollary*}
  Let $A$ and $B$ be AW*-algebras. 
  If $f \colon N(A) \to N(B)$ is an isomorphism of piecewise algebras, that
  restricts to isomorphisms $\Proj(A) \cong \Proj(B)$ and 
  $U(A) \cong U(B)$, and satisfies
  $f(upu^*)=f(u)f(p)f(u)^*$, then $A \cong B$.
\end{corollary*}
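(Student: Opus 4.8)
The plan is to deduce the statement from the main theorem, which realizes the category $\AWstar$ of AW*-algebras as a full subcategory of the category $\AL$ of active lattices via a fully faithful functor $F$. Since fully faithful functors reflect isomorphisms, it suffices to show that the hypotheses on $f$ assemble into an isomorphism $F(A) \cong F(B)$ in $\AL$; the desired $A \cong B$ then follows by reflecting this isomorphism, that is, by lifting the $\AL$-isomorphism and its inverse to AW*-morphisms $A \to B$ and $B \to A$ and using faithfulness to see that the composites are identities.

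Recall that $F(A)$ bundles three ingredients: the projections $\Proj(A)$ viewed as a piecewise complete Boolean algebra, the unitaries $U(A)$, and the conjugation action $u \cdot p = upu^*$ of the latter on the former. I would check that $f$ induces a component-wise isomorphism respecting all of this data. First, because $f$ is an isomorphism of piecewise algebras it preserves adjoints and the piecewise Boolean operations on commuting projections (which are definable from the piecewise algebra structure), so its restriction $\Proj(A) \to \Proj(B)$ is a homomorphism of the relevant order-theoretic structure; invoking the equivalence $\PAWstar \simeq \PCBoolean$ applied to the piecewise AW*-algebras $N(A)$ and $N(B)$, whose images are precisely $\Proj(A)$ and $\Proj(B)$, this restriction is in fact an isomorphism of piecewise complete Boolean algebras, matching the hypothesis $\Proj(A) \cong \Proj(B)$. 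Second, the hypothesis already supplies the group isomorphism $U(A) \cong U(B)$. Third, the relation $f(upu^*) = f(u) f(p) f(u)^*$ states exactly that $f$ intertwines the two conjugation actions. Since $f^{-1}$ enjoys all the same properties, the resulting active-lattice morphism is invertible.

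The step I expect to be the crux is verifying that this assembled data literally satisfies the definition of a morphism in $\AL$: one must confirm that no coherence condition between the lattice structure and the group action has been overlooked beyond the intertwining relation already supplied, and that the group component used in the definition of an active lattice is handled correctly. If active lattices are built from symmetries (self-adjoint unitaries) rather than from the full unitary group, I would instead pass from $U(A) \cong U(B)$ to the induced bijection on symmetries, which $f$ preserves because it preserves $*$ and sends an element $s$ with $s = s^*$ and $s^2 = 1$ to an element with the same properties, and then check that the action and all axioms restrict appropriately. Once the morphism and its inverse are confirmed to be $\AL$-morphisms, full faithfulness of $F$ delivers the isomorphism $A \cong B$.
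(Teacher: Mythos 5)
Your proposal is correct and is essentially the paper's own route: the corollary is stated there as a direct consequence of the main theorem, precisely by assembling the hypotheses into an isomorphism $\ActiveProj(A) \cong \ActiveProj(B)$ in $\AL$ and using that the full and faithful functor $\ActiveProj$ reflects isomorphisms (fullness supplies morphisms $g \colon A \to B$ and $h \colon B \to A$ lifting $f$ and $f^{-1}$, and faithfulness forces $hg = \mathrm{id}_A$ and $gh = \mathrm{id}_B$). Your final paragraph also correctly resolves the one genuine subtlety, namely that an active lattice carries the symmetry group $\Sym(A)$ rather than all of $U(A)$: since $f$ is piecewise linear it satisfies $f(1-2p) = 1 - 2f(p)$, so the group isomorphism $U(A) \cong U(B)$ restricts to $\Sym(A) \cong \Sym(B)$, and the conjugation hypothesis then gives exactly the required equivariance on the lattice.
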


There is considerable overkill in the previous corollary. 
For one thing, we could have stated the assumption on piecewise
algebras in terms that a priori contain less
information, such as, for example, the partial orders of commutative
subalgebras of $A$ and $B$ (see~\cite{hamhalter:jordan}), or various
notions built on those. 
We will prove that any isomorphism $\Proj(A) \to \Proj(B)$ extends
uniquely to an isomorphism $N(A) \to N(B)$ of piecewise algebras, that
could therefore have been left out from the assumptions altogether.
This puts the following two driving questions
on an equal footing. 
\begin{itemize}
\item What extra data make projections a complete
  invariant of AW*-algebras? 
\item What extra data on piecewise AW*-algebras enable
  extension to total ones?
\end{itemize} 
Moreover, it suffices to consider the subgroup of the
unitaries generated by so-called symmetries (see~\cite[Chapter~6]{alfsenshultz:statespaces}). 
Finally, the projection lattice injects into the symmetry group by $p
\mapsto 1-2p$, and so the projection lattice acts on itself in a 
certain sense. We can package up the remaining data in an \emph{active
lattice}, which therefore completely determines the AW*-algebra
structure. The precise definition can be found in
Section~\ref{sec:activelattices}, but let us emphasize here that it is
expressed exclusively in terms of the projection lattice and the
symmetry group that it generates. (For related ideas, see also~\cite{mayet:orthosymmetric}, which only came to our attention when the current work was already in press.)

In fact, we will be (quite) a bit more general, and work with
arbitrary morphisms instead of just isomorphisms: we define a functor
from the category of AW*-algebras to that of active lattices, and prove it 
to be full and faithful. 
This then implements our main result, which makes precise the titular claim that
an AW*-algebra is completely determined by its active lattice.

\begin{theorem*}
  The category of AW*-algebras is equivalent to a full subcategory of the category of active lattices.
\end{theorem*}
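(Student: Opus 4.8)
The plan is to construct a functor from AW*-algebras to active lattices and show it is full and faithful; the equivalence with a full subcategory then follows formally, since any full and faithful functor corestricts to an equivalence onto the full subcategory spanned by its image.

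The plan is to construct a functor $\ActiveProj$ from AW*-algebras to active lattices and to prove that it is full and faithful; the stated equivalence then follows at once, since any full and faithful functor corestricts to an equivalence between its domain and the full subcategory of its codomain spanned by the objects in its image. On objects, $\ActiveProj(A)$ records the projection lattice $\Proj(A)$, the subgroup of $U(A)$ generated by the symmetries $1-2p$, and the conjugation action of the latter on the former. On morphisms, a $*$-homomorphism $f\colon A\to B$ sends projections to projections and symmetries $1-2p$ to symmetries $1-2f(p)$, and the relation $f(upu^*)=f(u)f(p)f(u)^*$ makes it compatible with the action, so that $\ActiveProj(f)$ is a morphism of active lattices; functoriality is then immediate.

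Faithfulness is the easy half. Every self-adjoint element of an AW*-algebra is determined by its spectral projections, and $A$ is generated by its self-adjoint part, so a $*$-homomorphism is already determined by its restriction to $\Proj(A)$. Since the projection lattice is part of the active lattice, any two parallel morphisms of AW*-algebras inducing the same morphism of active lattices must coincide, and $\ActiveProj$ is faithful.

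Fullness is where the real work lies. Given a morphism $\phi\colon\ActiveProj(A)\to\ActiveProj(B)$, I would first extract its projection component $\phi_P\colon\Proj(A)\to\Proj(B)$ and extend it to a morphism $\bar\phi\colon N(A)\to N(B)$ of piecewise algebras, invoking the equivalence between piecewise AW*-algebras and piecewise complete Boolean algebras together with the result that a map of projection lattices extends uniquely to the normal part. The compatibility of $\phi$ with the symmetry action says precisely that $\bar\phi$ intertwines conjugation by symmetries. The crux is then to upgrade this piecewise morphism to a genuine $*$-homomorphism $A\to B$, and this is exactly what the refinement of the piecewise algebra structure is designed to supply: one expresses the total sum and product of \emph{non}-commuting elements in terms of piecewise operations combined with conjugation by symmetries. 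Morally, two orthogonal equivalent projections together with a symmetry interchanging them generate a copy of $\M_2(\C)$ whose off-diagonal matrix units carry precisely the orientation distinguishing $A$ from $A\op$. Because $\phi$ respects both the piecewise operations and the symmetry action, $\bar\phi$ carries these generators to their counterparts in $B$ and hence respects the reconstructed total operations, yielding a $*$-homomorphism that induces $\phi$.

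The hard part, and the step I expect to be the main obstacle, is this final upgrade: showing that the total associative product is genuinely recoverable from the piecewise (essentially Jordan) structure together with the symmetry action, and that compatibility with the action \emph{forces} $\bar\phi$ to be multiplicative for the total product rather than merely the piecewise one. This is the content of the refinement alluded to in the abstract, and it is exactly what prevents the reconstruction from collapsing the distinction between $A$ and $A\op$; everything else is bookkeeping about lattices, groups, and their extensions.
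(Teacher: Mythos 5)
Your skeleton matches the paper's: the functor $\ActiveProj(A) = (\Proj(A),\Sym(A),\cdot)$, faithfulness via the fact that $A$ is the closed linear span of its projections, and the formal corestriction of a full and faithful functor onto its image. The faithfulness argument is correct. But the proposal has a genuine gap, and it is exactly the step you flag yourself as ``the main obstacle'': you never actually show that an equivariant morphism of active lattices extends to a total $*$-homomorphism. Saying that the refinement ``is designed to supply'' this, and that two projections swapped by a symmetry ``morally generate a copy of $\M_2(\C)$,'' is a statement of intent, not an argument; since fullness \emph{is} the theorem (everything else is bookkeeping, as you note), the proof is missing precisely where it matters. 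The paper fills this hole with substantial machinery: an AW*-version of Dye's theorem (Theorem~\ref{thm:dye}) showing that a $\cOML$-morphism $\Proj(A)\to\Proj(B)$ satisfying the equivariance condition $f(s_pqs_p)=s_{f(p)}f(q)s_{f(p)}$ extends to a Jordan $*$-homomorphism --- proved by reducing to matrix algebras $\M_n(D)$ with $n\geq 3$, coordinatizing via the vector projections $p_{ij}(\alpha)$, and expressing addition, multiplication and involution of coordinates by lattice polynomials --- followed by Lemma~\ref{lem:fullness:typeone}, where full multiplicativity (not just the Jordan product) is recovered by expanding $f\bigl((1-2p)(1-2q)\bigr)$ in two ways using that $f$ is a group homomorphism on symmetries. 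That last computation is the concrete mechanism by which the symmetry action breaks the $A$ versus $A\op$ ambiguity; your proposal gestures at it but does not perform it.

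A second, independent defect: your argument is uniform in $A$, but Dye-type extension theorems fail for type $\mathrm{I}_2$ algebras, which are the classical exceptions in the Mackey--Gleason circle of problems. The paper must therefore split $A = p_1A\oplus p_2A$ into its type $\mathrm{I}_2$ summand and its $\mathrm{I}_2$-free part (Theorem~\ref{thm:fullness}), check that $f$ carries this central decomposition to a central decomposition of $B$ (using that $1-2p_i$ is central in $\Sym(A)$), and then handle type $\mathrm{I}_2$ by a completely separate direct computation (Proposition~\ref{prop:fullness:typeonetwo}): writing $A=\M_2(C)$, building matrix units in $B$ from $f(e_{11})$ and $f(e_{12}+e_{21})$, and verifying agreement on all projections via an explicit factorization of $1-2q$ into symmetries using determinants and traces over $C$. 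Your $\M_2(\C)$ heuristic, ironically, lives entirely inside the case your intended main tool cannot handle, and nothing in the proposal would detect that a case split is needed at all.
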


This is summarized in the following commuting diagram of
functors. Solid arrows represent functors that are faithful but not
full, whereas the dashed functor we construct is both
full and faithful. 
\[\xymatrix@R-2ex@C+2ex{
  & \AWstar \ar_-{\Proj}[dl] \ar^-{\Sym}[dr] \ar@{..>}[d] \\
  \cOML & \AL \ar[l] \ar[r] & \Group 
}\]
In particular, this result incorporates all von Neumann algebras, as W*-algebras and normal $*$-homomorphisms form a full subcategory of AW*-algebras.	

\subsubsection*{Motivation}

Our main motivation is to generalize the duality of commutative
C*-algebras and their Gelfand spectra to the noncommutative case.
Many proposals for noncommutative spectra have been studied. One of them concerns
\emph{quantales}~\cite{mulvey:andthen}, that are based on projection lattices
in the case of AW*-algebras. However, there are rigorous
obstructions to various categories being in duality with that of
C*-algebras, including that of quantales~\cite{reyes:obstructing,vdbergheunen:localicnogo}.
These obstructions suggest that 
a good notion of spectrum can instead be based on piecewise structures~\cite{heunenlandsmanspitterswolters:gelfand,hamhalter:jordan}. 
Our active lattices come very close to quantales, but circumvent the
obstruction afflicting them. Whereas a quantale is a monoid that
is also a lattice, an active lattice can be regarded as a
monoid that is generated by a lattice.
Stone duality between Stonean spaces and complete Boolean
algebras (see Section~\ref{sec:duality}) allows one to consider
$\Proj(A)$ as a substitute for the Gelfand spectrum in case $A$ is
a commutative AW*-algebra. So our results can also be regarded as a successful
extension of this ``substitute spectrum'' to noncommutative AW*-algebras.
This goal explains why go to the nontrivial trouble of take morphisms seriously, and deal with arbitrary morphisms with different domain and codomain rather than just focusing on isomorphisms.

The theorem above succeeds in extending a combination of Gelfand's and Stone's representation theorems noncommutatively for the case of AW*-algebras. Because of the relation to complete Boolean algebras sketched above, active lattices could be regarded as ``noncommutative Boolean algebras'', providing progress toward a category of ``noncommutative sets''.  This is an important step closer to the ``noncommutative topological spaces'' that C*-algebras represent than the ``noncommutative measure spaces'' of von Neumann algebras. This explains why we take pains to avoid measure-theoretic arguments and work with AW*-algebras instead of von Neumann algebras.

Our results can also be regarded as a novel answer to the Mackey--Gleason problem, 
that has been studied in great detail for von Neumann 
algebras. This type of problem asks what properties of a function
between projection lattices ensure that it extends to a linear
function between operator algebras, or more generally, what properties
of a function between operator algebras that is only piecewise linear
make it linear~\cite{buncewright:mackeygleason}. 
As mentioned, we generalize many constructions from von Neumann algebras to AW*-algebras, as the latter are the natural home for our arguments. In particular, we will not rely on Gleason's theorem to extend piecewise linearity to linearity~\cite{buncewright:jordan}, but directly generalize results to due to Dye~\cite{dye:projections} instead.
In addition, our main results hold perfectly well for algebras with $\mathrm{I}_2$
summands, which are exceptions to many classic theorems, including the Mackey--Gleason problem. Thus our main results answer this problem by approaching it a substantially different and worthwhile way.


\subsubsection*{Structure of the paper}

The article proceeds as follows. Section~\ref{sec:duality} recalls
AW*-algebras, complete Boolean algebras, and their piecewise
versions. It then proves that the two resulting categories of piecewise structures
are equivalent. Section~\ref{sec:activelattices} introduces
active lattices after discussing the ingredients of projection
lattices and symmetry groups. It also constructs the functor taking an
AW*-algebra to its active lattice. Section~\ref{sec:fullness} is
devoted to proving that this functor is full.

\section{(Piecewise) AW*-algebras and complete Boolean algebras}
\label{sec:duality}

After reviewing commutative AW*-algebras and their equivalence to
complete Boolean algebras, this section extends the equivalence to 
piecewise AW*-algebras and piecewise complete Boolean algebras,
positively answering~\cite[Remark~3]{vdbergheunen:colim}. 

\subsection*{AW*-algebras and complete Boolean algebras}

Kaplansky introduced AW*-algebras as an abstract generalization of von
Neumann algebras~\cite{kaplansky:awstar, berberian}. Their main characteristic is
that they are algebraically determined by their projections, \ie\ self-adjoint
idempotents, to a great extent. We denote the set of projections of a
$*$-ring $A$ by $\Proj(A)$. This set is partially ordered by the relation
$p \leq q \iff p = pq\, (= qp)$.

\begin{definition}\label{def:awstar}
  An \emph{AW*-algebra} is a C*-algebra $A$ that satisfies the
  following left-right symmetric and equivalent conditions:
  \begin{enumerate}[(a)]
  \item the right annihilator of any subset is generated as
    right ideal by a projection;
  \item the right annihilator of any element $a \in A$ is
    generated by a projection, and $\Proj(A)$ forms a
    complete lattice;
  \item the right annihilator of any element $a \in A$ is generated
    by a projection, and every orthogonal family in
    $\Proj(A)$ has a supremum;
  \item any maximal commutative subalgebra $C$ is the closed linear
    span of $\Proj(C)$, and every orthogonal family in $\Proj(A)$ has
    a supremum.
  \end{enumerate}
  A \emph{morphism of AW*-algebras} is a $*$-homomorphism that preserves
  suprema of projections. We write $\AWstar$ for the category of
  AW*-algebras and their morphisms.
\end{definition}

For a subset $S$ of an AW*-algebra $A$, write $\Rann(S)$ for
the (unique) projection of Definition~\ref{def:awstar}(a): $\Rann(S)$ is
the least projection annihilating every element of $S$, and is also the
(unique) projection such that $xy = 0$ for all $x \in S$ if and
only if $\Rann(S)y = y$. This is the \emph{right annihilating projection}
of $S$. With a slight abuse of notation, we write $\Rann(a)$ in place
of $\Rann(\{a\})$ for a single element $a \in A$. The projection
$\RP(a) = 1 - \Rann(a)$ is the \emph{right supporting projection}
of $a$. It is the least projection satisfying $a \RP(a) = a$.

Given the equivalent conditions defining AW*-algebras, there are
several possible choices for morphisms of the category \AWstar.
Fortunately, the most obvious conditions one might impose on
a $*$-homomorphism are also equivalent, as the following lemma shows.
Recall that a set of projections is called directed when every pair
of its elements has an upper bound within the set.

\begin{lemma}\label{lem:awstarmorphisms}
  For a $*$-homomorphism $f \colon A \to B$ between AW*-algebras, the
  following conditions are equivalent:
  \begin{enumerate}[\quad (a)]
  \item $f$ preserves right annihilating projections of arbitrary subsets;
  \item $f$ preserves suprema of arbitrary families of projections;
  \item $f$ preserves suprema of orthogonal families of projections;
  \item $f$ preserves suprema of directed families of projections.
  \end{enumerate}
  If $f$ satisfies these equivalent conditions, then the kernel of $f$
  is generated by a central projection and $f$ preserves $\RP$.
\end{lemma}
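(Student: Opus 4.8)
The plan is to route every condition through preservation of support projections and the order-theoretic dictionary $\bigvee_i p_i = 1 - \Rann(\{p_i\})$ together with $\Rann(S) = 1 - \bigvee_{a \in S}\RP(a)$, which turn annihilators into suprema of supports and back. Two auxiliary identities carry the argument: first, for a single element, $\RP(a) = \bigvee_n e_n$ where $e_n = \chi_{[1/n,\infty)}(a^*a)$ is an increasing sequence of spectral projections; second, the binary join identity $p \vee q = p + \mathrm{LP}((1-p)q)$, expressing a join as an orthogonal sum of $p$ with a single left support projection $\mathrm{LP}((1-p)q)$. I also use the trivial fact that every $*$-homomorphism preserves \emph{finite} orthogonal joins, since $f(p+q) = f(p)+f(q)$ with $f(p)f(q) = f(pq) = 0$ whenever $p \perp q$.

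First I would show that (c) and (d) each force $f$ to preserve $\RP$ (and symmetrically $\mathrm{LP}$). Writing $b = a^*a$ and choosing continuous bumps $g_n$ with $\chi_{[1/n,\infty)} \le g_n \le \chi_{[1/(n+1),\infty)}$, one has $e_n \le g_n(b) \le e_{n+1}$; applying the positive map $f$ and using $f(g_n(b)) = g_n(f(b))$ sandwiches $\bigvee_n f(e_n)$ between the spectral projections of $f(b) = f(a)^*f(a)$, forcing $\bigvee_n f(e_n) = \RP(f(a))$. Under (d) the increasing sequence $\{e_n\}$ is directed, so $f(\RP(a)) = \bigvee_n f(e_n)$ at once; under (c) I instead feed the orthogonal family $\{e_1, e_2 - e_1, e_3 - e_2, \dots\}$ into orthogonal additivity and telescope, reaching the same value. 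Either way $f(\RP(a)) = \RP(f(a))$.

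Given support preservation, the binary join identity immediately yields preservation of finite joins: $f(p \vee q) = f(p) + \mathrm{LP}(f((1-p)q)) = f(p) \vee f(q)$, the last join being orthogonal. From here the two branches reconverge. Under (d), the finite subjoins of an arbitrary family $\{p_i\}$ form a directed family with the same supremum, each preserved by finite-join preservation and the whole preserved by (d), giving (b). Under (c), I would run a transfinite induction over a well-ordering of $\{p_i\}$ on the partial joins $s_\alpha = \bigvee_{\beta \le \alpha} p_\beta$: successor steps are finite joins, while limit steps are handled by applying orthogonal additivity to the orthogonal differences $s_\gamma - \sup_{\delta<\gamma} s_\delta$ and telescoping along the increasing values $f(s_\gamma)$ supplied by the inductive hypothesis, again giving (b). Thus both (c) and (d) imply (b), while (b) implies (c) and (d) by specialization; the dictionary converts (b) into (a) through $\bigvee_i p_i = 1 - \Rann(\{p_i\})$ and back, closing the cycle. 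Finally, since $f$ preserves $\RP$, we have $f(x) = 0 \iff \RP(x) \in \ker f$; the projection $z = \bigvee\{p \in \Proj(A) : f(p) = 0\}$ lies in $\ker f$ by (b), is fixed by every unitary conjugation because $\ker f$ is a two-sided ideal, hence is central, and $\ker f = zA$.

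The step I expect to be the main obstacle is extracting support preservation, and then arbitrary-join preservation, from the orthogonal hypothesis (c) alone: the spectral resolutions and partial joins that arise naturally are increasing (directed) rather than orthogonal, so (c) cannot be applied to them directly. The remedy is to phrase everything through orthogonal differences and telescoping, and to carry the general family by transfinite induction with orthogonal additivity supplying the limit stages. A secondary wrinkle is the bookkeeping when $f$ is not unital: with $e = f(1)$ the annihilators in condition (a) must be read inside the corner $eBe$, equivalently one corestricts $f$ to the unital homomorphism $A \to eBe$ before invoking the dictionary.
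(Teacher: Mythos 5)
Your proof is correct, and it takes a genuinely more self-contained route than the paper, which delegates its two pivotal steps to Berberian's book. The paper obtains $\RP$-preservation and the central-kernel statement under (c) by citing Berberian's Exercise~23.8, obtains preservation of binary joins of projections by citing Berberian's Proposition~5.7 (glossed there as ``any $*$-homomorphism between AW*-algebras restricts to a lattice homomorphism''), and then feeds these into Lemma~\ref{lem:orthocomplete} to get (c)$\Rightarrow$(b); the remaining implications are the same short arguments you give. You instead prove $\RP$-preservation directly, by sandwiching spectral projections of $a^*a$ between continuous bumps so that (c) and (d) are handled uniformly (telescoped orthogonal differences versus the increasing sequence itself), derive binary joins from it via the Rickart identity $p \vee q = p + \mathrm{LP}((1-p)q)$, replace Lemma~\ref{lem:orthocomplete} by an equivalent transfinite induction, and prove the kernel statement by hand from $z=\bigvee\{p \in \Proj(A) : f(p)=0\}$ and unitary invariance. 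Your route buys two things. First, your dependency order ($\RP$-preservation first, lattice homomorphism second) is the mathematically safe one: binary-join preservation is \emph{not} automatic for arbitrary $*$-homomorphisms between AW*-algebras (an ultralimit $\prod_n \M_2(\C) \to \M_2(\C)$ sends $p \vee q$ to $1$ while $f(p) \vee f(q)$ has rank one, for suitable rank-one $p,q$ with angles tending to zero), so the lattice-homomorphism property invoked by the paper is only available because $f$ is already known to preserve $\RP$ at that point---exactly the order in which you argue. Second, your dictionary $\bigvee_i p_i = 1 - \Rann(\{p_i\})$ is stated in the correct orientation, whereas the paper's displayed identity $\bigvee\{p_i\} = \Rann(\{1-p_i\})$ actually computes the infimum: $(1-p_i)p = 0$ forces $p \leq p_i$, not $p_i \leq p$. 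Your only superfluous worry is the non-unital corner correction at the end: condition (a) applied to $S=\{0\}$ already forces $f(1)=1$, and the paper's conventions make morphisms unital anyway (note that $\C$ is initial in $\AWstar$); the caution would matter only if one insisted on reading (b)--(d) for genuinely non-unital maps, where equivalence with (a) does fail.
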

\begin{proof}
  For a morphism $f$ satisfying (c), the last sentence of the lemma
  follows from~\cite[Exercise~23.8]{berberian}. That (b)
  implies (a) follows from the fact that such $f$ preserves $\RP$ as
  well as the following equation for any $S \subseteq A$,
  \[
     \Rann(S) = \bigvee_{x \in S} \Rann(x) = \bigvee_{x \in S} (1 - \RP(x))
  \]
  (see also~\cite[Proposition~4.2]{berberian}). Conversely,
  assume (a), and let $\{p_i\} \subseteq \Proj(A)$. We will prove
  that
  \[
      \bigvee \{p_i\} = \Rann(\{1-p_i\}),
  \]
  from which (a) $\Rightarrow$ (b) will follow. Writing $p =
  \Rann(\{1-p_i\})$, every $(1-p_i) \perp p$, which gives $p_i \leq p$
  for all $i$. And if all $p_i  \leq q$ for any $q \in \Proj(A)$, then
  all $(1-p_i) \perp q$, which means that $pq = q$ and thus $q \leq p$.
  Hence $p = \bigvee_i p_i$ as desired.

  Clearly (b) $\Rightarrow$ (d). To see (d) $\Rightarrow$ (c), let
  $\mathcal{P}$ be an orthogonal family of projections. Setting $q_S =
  \bigvee S$ for every finite subset $S \subseteq \mathcal{P}$ gives a
  directed family of projections with the same supremum as
  $\mathcal{P}$. Because each $S$ is orthogonal and finite, we have
  $f(q_S) = f(\sum S) = \sum f(S) = \bigvee f(S)$. And because $f$ is
  assumed to preserve directed suprema, $f(\bigvee \mathcal{P}) = f(\bigvee_S q_S) =
  \bigvee_S f(q_S) = \bigvee_S f(S) = \bigvee f(\mathcal{P})$.

  Finally, because any
  $*$-homomorphism $f \colon A \to B$ between AW*-algebras 
  restricts to a lattice homomorphism $\Proj(A) \to
  \Proj(B)$ (see~\cite[Proposition~5.7]{berberian}),
  Lemma~\ref{lem:orthocomplete} below provides a direct proof of (c)
  $\Rightarrow$ (b). 
\end{proof}

Observe that the proof of the previous lemma establishes more than was
promised: it shows that direct sums provide finite products in the category
\AWstar. 
The initial object is the AW*-algebra $\C$, and the terminal object is the
zero algebra.
Observe also that the above lemma holds true if $f$ is only assumed
to be a $*$-ring homomorphism. This will be useful later in
Section~\ref{sec:fullness}. 

Let $\Wstar$ denote the category of W*-algebras
(\ie\ abstract von Neumann algebras) and 
normal $*$-homomorphisms. Then $\Wstar$ is a full subcategory of
$\AWstar$. (The objects of $\Wstar$ are objects of $\AWstar$
by~\cite[Proposition~4.9]{berberian}, and the subcategory can
be shown to be full, for instance, by composing a $*$-homomorphism $A \to B$
with all normal linear functionals on $B$ and
using~\cite[Corollary~III.3.11]{takesaki:operatoralgebras1}. See
also~\cite[Lecture~11]{lurie:neumann}.)
In particular, the lemma above provides equivalent conditions for
a $*$-homomorphism between von Neumann algebras to be
normal.

If an AW*-algebra is commutative, its projections form a
\emph{complete Boolean algebra}: a distributive lattice in which
every subset has a least upper bound, and in which every element has a
complement. In fact, we now detail an equivalence between the
categories of commutative AW*-algebras and complete Boolean algebras.

First recall Stone duality~\cite[Corollary~II.4.4]{johnstone:stonespaces},
which gives a dual equivalence between Boolean algebras and Stone
spaces, \ie\ totally disconnected compact Hausdorff spaces. If the Boolean
algebra is complete, the corresponding Stone space is in fact a Stonean
space, \ie\ extremally disconnected, meaning that the closure of every open
set is (cl)open. We write $\CBoolean$ for the category of complete
Boolean algebras and homomorphisms of Boolean algebras that preserve
arbitrary suprema.
On the topological side, we write \cat{Stonean} for 
the category of Stonean spaces and open continuous functions. With
this choice of morphisms, Stone duality restricts to a dual
equivalence between $\CBoolean$ and
$\cat{Stonean}$. See~\cite[Section~6]{bezhanishvili:devries}.

Similarly, recall that Gelfand duality gives a dual equivalence
between commutative C*-algebras and compact Hausdorff spaces. If the
C*-algebra is an AW*-algebra, then the compact Hausdorff space is in
fact a Stonean space~\cite[Theorem~7.1]{berberian}.
If we write \cat{cAWstar} for the full subcategory of
\cat{AWstar} consisting of commutative AW*-algebras, then Gelfand
duality restricts to a dual equivalence between \cat{cAWstar} and
\cat{Stonean}. 
Hence we have the following equivalences.
\begin{equation}\label{eq:commutativeduality}
  \xymatrix@C+4ex{
    \cat{cAWstar} \ar@{}|-{\simeq}[r] \ar@<1ex>^-{\Spec}[r]
    & \cat{Stonean}\op \ar@{}|-{\simeq}[r] \ar@<1ex>^-{\Clopen}[r] \ar@<1ex>^-{\Cont}[l]
    & \CBoolean \ar@<1ex>^-{\Stone}[l]
  }
\end{equation}

Explicitly, $\Spec$ is the functor taking characters and furnishing
them with the Gelfand topology, and $\Clopen$ takes clopen subsets,
so the composite $\Clopen \circ \Spec$ is naturally isomorphic to the
functor $\Proj$. We write $\Func$ for the composite $\Cont \circ
\Stone$. Explicitly, $\Stone = \CBoolean(-,2)$ and $\Cont = C(-,\mathbb{C})$.  
Thus $\Proj$ and $\Func$ form an equivalence between
commutative AW*-algebras and complete Boolean algebras.

\subsection*{Piecewise structures}

Piecewise algebras are sets of which only certain pieces carry
algebraic structure, but in a coherent way. Before we can extend the
equivalence above to a piecewise setting, we spell out the
appropriate definitions. Definition~\ref{def:awstar}(c) leads to a
specialization of the definition of a piecewise C*-algebra, that we 
recall first~\cite{vdbergheunen:colim}. 

\begin{definition}
\label{def:pawstar}
  A \emph{piecewise C*-algebra} consists of a set $A$ with:
  \begin{itemize}
    \item a reflexive and symmetric binary
      (\emph{commeasurability}) relation $\commeas \subseteq A \times A$;
    \item elements $0,1 \in A$;
    \item a (total) involution $* \colon A \to A$;
    \item a (total) function $\cdot \colon \mathbb{C} \times A \to A$;
    \item a (total) function $\|\! - \!\| \colon A \to \mathbb{R}$;
    \item (partial) binary operations $+, \cdot \colon  \commeas \to A$;
  \end{itemize}
  such that every set $S \subseteq A$ of pairwise commeasurable
  elements is contained in a set $T \subseteq A$ of pairwise
  commeasurable elements that forms a commutative C*-algebra under the
  above operations.

  A \emph{piecewise AW*-algebra} is a piecewise C*-algebra $A$ with
  \begin{itemize}
    \item a (total) function $\RP \colon A \to \Proj(A)$;
    \item a (partial) operation $\bigvee \colon \{ X \subseteq \Proj(A) \mid X
      \times X \subseteq \commeas \} \to \Proj(A)$;
  \end{itemize}
  such that every set $S \subseteq A$ of pairwise commeasurable
  elements is contained in a set $T \subseteq A$ of pairwise
  commeasurable elements that forms a commutative AW*-algebra under
  the above operations.

  A \emph{morphism of piecewise AW*-algebras} is a (total) function $f
  \colon A \to B$ such that:
  \begin{itemize}
    \item $f(a) \commeas f(b)$ for commeasurable $a,b \in A$;
    \item $f(ab)=f(a)f(b)$ for commeasurable $a,b \in A$;
    \item $f(a+b)=f(a)+f(b)$ for commeasurable $a,b \in A$;
   \item $f(za) = zf(a)$ for $z \in \mathbb{C}$ and $a \in A$;
    \item $f(a)^* = f(a^*)$ for $a \in A$;
    \item $f(\bigvee_i p_i) = \bigvee_i f(p_i)$ for pairwise
        commeasurable projections $\{p_i\}$.
  \end{itemize}
  By Lemma~\ref{lem:awstarmorphisms}, such a morphism automatically
  satisfies $f(\RP(a)) = \RP(f(a))$. Also, it follows from the last
  condition that $f(1)=1$.
  Piecewise AW*-algebras and their morphisms organize themselves into
  a category denoted by $\cat{PAWstar}$.
\end{definition}

The prime example of a piecewise AW*-algebra is the set $N(A)$ of
normal elements of an AW*-algebra $A$, where commeasurability is given
by commutativity. Hence one can regard piecewise AW*-algebras as
AW*-algebras of which the algebraic structure between noncommuting
elements is forgotten.

\begin{lemma}\label{lem:normalfunctor}
  The assignment sending an AW*-algebra $A$ to its set of normal elements
  $N(A)$ defines a functor $N \colon \AWstar \to \PAWstar$.
\end{lemma}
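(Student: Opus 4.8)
The plan is to verify the two halves of functoriality separately: first that $N(A)$ carries the structure of a piecewise AW*-algebra for every object $A$ of $\AWstar$, and then that every morphism of $\AWstar$ restricts to a morphism of $\PAWstar$ in a way that respects identities and composition. Throughout, I equip $N(A)$ with the commeasurability relation $a \commeas b$ given by commutativity (equivalently, by $a$ and $b$ lying in a common commutative C*-subalgebra), and with the operations $0$, $1$, $*$, scalar multiplication, norm, $\RP$, and the partial operations $+$, $\cdot$, $\bigvee$ all inherited by restriction from $A$. Note first that these partial operations are well defined on $N(A)$: the sum and product of two commuting normal elements is again normal, and $\RP(a)$ as well as any $\bigvee$ of projections is a projection, hence normal.

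The heart of the object part is the coherence condition of Definition~\ref{def:pawstar}. Given a set $S \subseteq N(A)$ of pairwise commeasurable elements, the elements of $S$ together with their adjoints pairwise commute, so they generate a commutative $*$-subalgebra of $A$; by Zorn's lemma this is contained in a maximal commutative $*$-subalgebra $T$. Every maximal commutative $*$-subalgebra of an AW*-algebra is itself a commutative AW*-algebra: condition~(d) of Definition~\ref{def:awstar} supplies exactly this, since $T$ is the closed linear span of $\Proj(T)$ and inherits orthogonal suprema. All elements of $T$ are normal and pairwise commeasurable, and $T \supseteq S$, so $T$ is the required extension, provided the operations inherited from $A$ agree with the intrinsic commutative AW*-algebra structure of $T$.

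The one point requiring care, and the step I expect to be the main obstacle, is precisely this last agreement for the order-theoretic operations $\RP$ and $\bigvee$, which are defined using the global structure of $A$ rather than that of $T$. For the product, sum, involution, scalars, and norm there is nothing to prove, as these are literally the restrictions of the $A$-operations. For suprema, I would use the standard fact that the supremum in $A$ of a family of projections commuting with an element $x$ again commutes with $x$ (it is computed via annihilators, cf.\ the formula $\bigvee p_i = \Rann(\{1-p_i\})$ from the proof of Lemma~\ref{lem:awstarmorphisms}). Hence for a commeasurable family in $\Proj(T)$ its supremum computed in $A$ commutes with all of $T$, so lies in $T$ by maximality, and is then the supremum in $T$ as well; the analogous argument applied to $\RP(a) = 1 - \Rann(a)$ for $a \in T$ shows that $\RP(a) \in T$ coincides with the supporting projection computed in $T$. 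This establishes that $T$ is a commutative AW*-algebra under the inherited operations, completing the object part.

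For the morphism part, let $f \colon A \to B$ be a morphism in $\AWstar$ and restrict it to $N(A) \to N(B)$; this is well defined since $f(a)f(a)^* = f(aa^*) = f(a^*a) = f(a)^*f(a)$ for normal $a$. The conditions of Definition~\ref{def:pawstar} for a piecewise morphism are then immediate: $f$ preserves commeasurability because $ab = ba$ forces $f(a)f(b) = f(b)f(a)$ (and likewise for adjoints), it preserves the partial sum and product because it is a $*$-homomorphism, and it preserves scalars, involution, and suprema of pairwise commeasurable projections, the last by Lemma~\ref{lem:awstarmorphisms}; the identity $f(\RP(a)) = \RP(f(a))$ was already noted there. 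Finally, since $N$ acts on morphisms by restriction, $N(\mathrm{id}_A) = \mathrm{id}_{N(A)}$ and $N(g \circ f) = N(g) \circ N(f)$ hold trivially, so $N$ is a functor.
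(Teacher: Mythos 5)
Your proof is correct and takes essentially the same route as the paper's: both hinge on the fact that any pairwise commuting subset of $N(A)$ lies in a maximal commutative subalgebra of $A$, which by Definition~\ref{def:awstar}(d) is a commutative AW*-subalgebra contained in $N(A)$, with functoriality then being routine. The paper simply compresses what you spell out --- it cites \cite[Proposition~3]{vdbergheunen:colim} for the piecewise C*-structure (where your unremarked claim that commuting normal elements have commuting adjoints, i.e.\ Fuglede's theorem, is handled) and absorbs your careful argument that the suprema and $\RP$ inherited from $A$ agree with the intrinsic ones of $T$ into the single word ``AW*-subalgebra''.
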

\begin{proof}
  Let $A$ be an AW*-algebra. The natural piecewise algebra structure on $N(A)$
  is a piecewise $C^*$-algebra by~\cite[Proposition~3]{vdbergheunen:colim}.
  It is a piecewise AW*-algebra under the inherited $\RP$ 
  and supremum operations, because every pairwise commuting subset of
  $N(A)$ is contained in a maximal commutative subalgebra of $A$, that
  is an AW*-subalgebra by Definition~\ref{def:awstar}(d), and must
  itself necessarily be contained in $N(A)$.  
  Functoriality of $N$ is easy to check.
\end{proof}

The next lemma observes that the structures 
$\bigvee$ and $\RP$ in Definition~\ref{def:pawstar} are
in fact properties. (Nonetheless morphisms in $\PAWstar$ have
to preserve $\bigvee$.) Thus we may say that a certain piecewise C*-algebra
``is a piecewise AW*-algebra'' without ambiguity of the
AW*-operations.
We call a projection $p$ of a piecewise C*-algebra a \emph{least upper
commeasurable bound} of a commeasurable set $S$ of projections when $p
\commeas a$ for any $a$ that makes $S \cup \{a\}$ 
commeasurable, and whenever a projection $q$ is 
commeasurable with $S \leq q$, then $q$ is commeasurable with $p$
as well and $p \leq q$. 

\begin{lemma}
  Let $A$ be a piecewise C*-algebra. There is at most one choice of
  operations $\bigvee$ and $\RP$ as in Definition~\ref{def:pawstar} 
  making $A$ a piecewise AW*-algebra.
\end{lemma}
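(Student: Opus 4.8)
The plan is to show that both $\RP$ and $\bigvee$ are forced by the underlying piecewise C*-algebra, by exploiting the defining axiom: any pairwise commeasurable set sits inside a piece $T$ that is a commutative AW*-algebra under the given operations. The crucial principle is that a commutative AW*-algebra has a \emph{unique} right-supporting-projection map and a unique supremum operation on its projections, both determined by its C*-algebra structure alone. Consequently, if $(\RP, \bigvee)$ is any piecewise AW*-structure on $A$ and $T \subseteq A$ is a commutative AW*-algebra under it, then $\RP$ and $\bigvee$ restricted to $T$ must coincide with the support projection and supremum computed internally in $T$, and in particular land inside $T$. Note also that $\Proj(A)$ (the self-adjoint idempotents) and the order $p \le q \iff p = pq$ on commeasurable projections are features of the piecewise C*-structure alone, and that $\le$ is antisymmetric on any commeasurable pair, since such a pair lies in a common commutative piece.

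Suppose then that we have two piecewise AW*-structures $(\RP_1, \bigvee_1)$ and $(\RP_2, \bigvee_2)$ on $A$, and first treat $\RP$. Fix $a \in A$. I would establish that $p_1 := \RP_1(a)$ satisfies $p_1 \commeas a$, $a p_1 = a$, and $p_1 \le q$ for every projection $q$ with $q \commeas a$ and $aq = a$ (with $p_1 \commeas q$). For this last property, given such a $q$, the pair $\{a, q\}$ is pairwise commeasurable and hence lies in some commutative AW*-piece $T$ of structure $1$; then $p_1 = \RP_1(a) \in T$ by the principle above, so $p_1 \commeas q$, and internally in $T$ the support of $a$ lies below any projection $q \in T$ with $aq = a$, giving $p_1 \le q$. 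The same holds for $p_2 := \RP_2(a)$. Since $p_2 \commeas a$ and $ap_2 = a$, the minimality of $p_1$ yields $p_1 \le p_2$, and symmetrically $p_2 \le p_1$; as $p_1 \commeas p_2$, antisymmetry forces $\RP_1(a) = \RP_2(a)$.

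For $\bigvee$, let $X \subseteq \Proj(A)$ be pairwise commeasurable. I would verify that each $\bigvee_i X$ is a least upper commeasurable bound of $X$ in the sense defined just above the lemma. Indeed, for any $a$ with $X \cup \{a\}$ commeasurable, this set lies in a commutative AW*-piece $T'$ of structure $i$, whence $\bigvee_i X \in T'$ and so $\bigvee_i X \commeas a$; and for any projection $q \commeas X$ with $X \le q$, the set $X \cup \{q\}$ lies in a commutative AW*-piece $T''$, so $\bigvee_i X \in T''$ gives $\bigvee_i X \commeas q$, while internally the supremum lies below the upper bound $q$, giving $\bigvee_i X \le q$. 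Thus $\bigvee_2 X$ is a commeasurable upper bound of $X$, and the minimality clause for $\bigvee_1 X$ yields $\bigvee_1 X \le \bigvee_2 X$; by symmetry and antisymmetry $\bigvee_1 X = \bigvee_2 X$.

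The main obstacle to anticipate is that commeasurability is neither transitive nor strong enough to place all relevant elements into a single commutative piece at once: given $\RP_1(a)$ and a competitor $q$ with $aq = a$, one does not know a priori that $\RP_1(a) \commeas q$. The resolution, used repeatedly above, is to invoke the piece axiom only on sets already known to be pairwise commeasurable (typically a single pair), and then to recover the missing commeasurabilities from the fact that the globally defined operations land inside whatever commutative AW*-piece one has produced. Antisymmetry of $\le$ on commeasurable projections is what finally upgrades the two resulting inequalities to the desired equalities.
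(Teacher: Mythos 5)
Your proposal is correct and follows essentially the same route as the paper: both proofs rest on the observation that any piecewise AW*-structure forces $\bigvee S$ to be the \emph{least upper commeasurable bound} of $S$ (proved by invoking a piece containing $S$, and a piece containing $S \cup \{q\}$ for each commeasurable majorant $q$), which is a notion intrinsic to the underlying piecewise C*-algebra, so that antisymmetry pins $\bigvee$ down uniquely. The only (minor) divergence is in how $\RP$ is handled: you run the same minimality-plus-antisymmetry argument directly on $\RP(a)$ as the least commeasurable projection $q$ with $aq=a$, whereas the paper disposes of $\RP$ by writing it as a formula in terms of the already-determined $\bigvee$ and the order; your direct argument is, if anything, the more carefully justified of the two.
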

\begin{proof}
  First, we claim that in any piecewise AW*-algebra, $\bigvee$ is
  characterized as giving the least upper commeasurable bound.
  Since these rely only the underlying
  piecewise C*-algebra structure, $\bigvee$ is then unique. The claim
  derives from 
  Definition~\ref{def:pawstar} as follows. Since $\bigvee$ makes $A$
  into a piecewise AW*-algebra, there exists a commutative AW*-algebra
  $T \subseteq A$ whose suprema are given by $\bigvee$, containing
  $S$. Hence $T$ contains $\bigvee S$, making $S \cup 
  \{ \bigvee S \}$ commeasurable, and $\bigvee S$ majorizes $S$. If $q$ is
  commeasurable with $S$ and majorizes it, there exists an AW*-algebra
  $T$ containing $S \cup \{q\}$. In particular, it is closed under suprema
  of projections, which are given by $\bigvee$. Thus it contains
  $\bigvee S$, which is therefore commeasurable with $q$ and
  $\bigvee S \leq q$.
  Finally, $\RP(a) = \bigwedge \{ p \in \Proj(A)
  \mid ap=a \}$ equals $\bigvee \{ q \in \Proj(A) \mid \forall p \in
  \Proj(A) \colon ap=a \Rightarrow p \leq q\}$.
\end{proof}

The next two results give convenient ways to recognize 
piecewise AW*-algebras among piecewise C*-algebras. 
The first shows that a piecewise AW*-algebra is a piecewise
C*-algebra that is ``covered'' by sufficiently many AW*-algebras; 
recall that an AW*-algebra $A$ is an \emph{AW*-subalgebra} of an
AW*-algebra $B$ when the inclusion $A \hookrightarrow B$ is a morphism
in $\AWstar$.
The second is a characterization analogous to Kaplansky's
original definition of AW*-algebras as 
C*-algebras with extra properties, Definition~\ref{def:awstar}(d).

\begin{lemma}\label{RPandsup}
  A piecewise C*-algebra $A$ is a piecewise AW*-algebra when:
  \begin{itemize}
  \item any commeasurable subset $S$ is contained in a
    commeasurable subset $T(S)$ that is an AW*-algebra, such that:
  \item if $S \subseteq S'$ are commeasurable subsets, 
    $T(S)$ is an AW*-subalgebra of $T(S')$.
  \end{itemize}
\end{lemma}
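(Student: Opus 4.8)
The plan is to manufacture the two missing operations $\RP$ and $\bigvee$ out of the covering AW*-algebras $T(S)$, and then check that each $T(S)$ witnesses the covering condition required by the piecewise AW*-algebra structure in Definition~\ref{def:pawstar}. The first observation is that every $T(S)$ is in fact a \emph{commutative} AW*-algebra: its elements are pairwise commeasurable, and within the ambient piecewise C*-algebra any pairwise commeasurable set sits inside a commutative C*-algebra, so the multiplication inherited from $A$ is commutative on $T(S)$.

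Next I would define the operations. For $a \in A$, set $\RP(a)$ to be the right supporting projection of $a$ computed inside the AW*-algebra $T(\{a\})$; this lies in $\Proj(A)$ because the operations of $T(\{a\})$ are precisely those inherited from $A$, so a self-adjoint idempotent of $T(\{a\})$ is one of $A$. For a pairwise commeasurable family $X \subseteq \Proj(A)$, set $\bigvee X$ to be the supremum computed inside $T(X)$. The crux is well-definedness: these values must not depend on extraneous choices and must be compatible across different covering algebras. Here the second hypothesis is essential. Whenever $a \in S$, the inclusion $T(\{a\}) \hookrightarrow T(S)$ is an AW*-subalgebra inclusion, and by Lemma~\ref{lem:awstarmorphisms} such inclusions preserve $\RP$; hence $\RP(a)$ computed in $T(\{a\})$ agrees with $\RP(a)$ computed in $T(S)$. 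The same argument, using condition (b) of Lemma~\ref{lem:awstarmorphisms}, shows that $\bigvee X$ computed in $T(X)$ agrees with the supremum computed in any larger $T(S) \supseteq X$.

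With the operations in hand, verifying the covering condition is then routine. Given a pairwise commeasurable $S$, take $T = T(S)$. It is a commutative AW*-algebra by the first paragraph; its C*-operations $+,\cdot,*,\|{-}\|$ and scalar action are those of $A$ because $A$ is assumed to be a piecewise C*-algebra; and the compatibility established above shows that the globally defined $\RP$ and $\bigvee$ restrict on $T(S)$ to the genuine AW*-operations of $T(S)$. Thus $T(S)$ realizes all of the required structure, so every pairwise commeasurable subset of $A$ lies in a commutative AW*-algebra whose operations agree with the prescribed ones, and $A$ is a piecewise AW*-algebra.

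I expect the main obstacle to be precisely the well-definedness and compatibility step of the second paragraph, since it is where the AW*-subalgebra compatibility hypothesis does its real work and where one must invoke the equivalence of conditions in Lemma~\ref{lem:awstarmorphisms} to know that subalgebra inclusions preserve both $\RP$ and arbitrary suprema of projections simultaneously. The remaining verifications are bookkeeping that follows from $A$ already being a piecewise C*-algebra.
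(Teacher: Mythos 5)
Your proposal is correct and takes essentially the same route as the paper: define $\RP(a)$ by computing it in $T(\{a\})$ and $\bigvee X$ by computing it in $T(X)$, then use the AW*-subalgebra hypothesis to see that these values are unchanged when computed in any larger $T(S)$. The only cosmetic difference is that you justify the invariance of $\RP$ and of suprema under AW*-subalgebra inclusions via Lemma~\ref{lem:awstarmorphisms}, whereas the paper cites Berberian's Propositions~3.8 and~4.8 directly; both justifications are valid.
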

\begin{proof}
  Define functions $\RP$ and $\bigvee$ by calculating $\RP(a)$ as in $T(\{a\})$,
  and calculating $\bigvee X$ as in $T(X)$. By~\cite[Proposition~3.8]{berberian},
  then $\RP(a)$ is the same when calculated in any $T(S)$ with
  $a \in S$, because $T(\{a\})$ is an AW*-subalgebra of $T(S)$.
  Similarly, $\bigvee{X}$ is the same in any $T(S)$ with $X \subseteq
  S$~\cite[Proposition~4.8]{berberian}. Therefore $\RP$ and $\bigvee$
  make $A$ into a piecewise AW*-algebra.
\end{proof}

\begin{proposition}
  A piecewise C*-algebra $A$ is a piecewise AW*-algebra when both:
  \begin{itemize}
  \item commeasurable sets of projection have least upper
    commeasurable bounds; 
  \item maximal commeasurable subalgebras are closed linear
    spans of projections.   
  \end{itemize}
\end{proposition}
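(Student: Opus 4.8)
The plan is to verify that $A$ satisfies the definition of a piecewise AW*-algebra directly, by exhibiting for each commeasurable set a covering commutative AW*-algebra, rather than by routing through Lemma~\ref{RPandsup}: with maximal commeasurable subalgebras as the covering pieces the AW*-subalgebra coherence demanded by that lemma cannot hold, so a direct argument is cleaner. First I would pin down the two operations. By the uniqueness lemma above, in any piecewise AW*-algebra $\bigvee$ must coincide with the least upper commeasurable bound; the first hypothesis guarantees such bounds exist, so I take $\bigvee S$ to be the least upper commeasurable bound of a commeasurable set $S$ of projections. Likewise, following the formula in that lemma, I define $\RP(a) = \bigvee\{q \in \Proj(A) \mid \forall p \in \Proj(A)\colon ap=a \Rightarrow p \leq q\}$, so that both operations are expressed purely in terms of the underlying piecewise C*-structure and are in particular independent of any ambient subalgebra.

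Next, given a commeasurable subset $S \subseteq A$, I would invoke Zorn's lemma to enlarge $S$ to a maximal commeasurable subset $M$. By the piecewise C*-axiom every commeasurable set lies in a commeasurable commutative C*-algebra, so maximality forces $M$ itself to be a commutative C*-algebra. The second hypothesis then says $M$ is the closed linear span of $\Proj(M)$, which is half of the characterization in Definition~\ref{def:awstar}(d).

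The remaining, and main, step is to supply the suprema demanded by Definition~\ref{def:awstar}(d) and to check that they are computed by the global $\bigvee$. Given an orthogonal (indeed any commeasurable) family $X \subseteq \Proj(M)$, let $p = \bigvee X$ be its least upper commeasurable bound. For each $m \in M$ the set $X \cup \{m\}$ is commeasurable, since it sits inside $M$; hence the defining property of the least upper commeasurable bound gives $p \commeas m$. Thus $M \cup \{p\}$ is commeasurable, and maximality of $M$ puts $p \in M$. Moreover $p$ is a supremum of $X$ inside $\Proj(M)$: it is an upper bound, and any $q \in \Proj(M)$ with $q \geq X$ is commeasurable with $X$ and majorizes it, so $p \leq q$. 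Therefore orthogonal families in $\Proj(M)$ have suprema, $M$ is a commutative AW*-algebra by Definition~\ref{def:awstar}(d), and the global $\bigvee$ restricts on $M$ to its genuine lattice suprema; the analogous restriction statement for $\RP$ follows from its defining formula. This exhibits $M$ as the commeasurable commutative AW*-algebra required by Definition~\ref{def:pawstar}, completing the verification.

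I expect the crux to be the containment $p \in M$: everything hinges on showing that the intrinsically defined least upper commeasurable bound is commeasurable with the whole maximal subalgebra, so that maximality reels it back inside. Once $p$ lies in $M$, its coincidence with the lattice supremum and the well-definedness of $\RP$ are routine, and independence of the choice of $M$ is automatic because $\bigvee$ and $\RP$ were defined without reference to any subalgebra.
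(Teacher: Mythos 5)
Your overall route is the same as the paper's: enlarge a commeasurable set to a maximal commeasurable set $M$ via Zorn's lemma, note that maximality forces $M$ to be a commutative C*-algebra, and then combine the two hypotheses to upgrade $M$ to an AW*-algebra via Definition~\ref{def:awstar}(d). Your handling of $\bigvee$ is correct, and it usefully makes explicit a step the paper leaves implicit: that the least upper commeasurable bound $p$ of a family $X \subseteq \Proj(M)$ is commeasurable with every element of $M$, hence lies in $M$ by maximality, and is there the genuine lattice supremum. That is indeed the crux, and your argument for it is sound.

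The genuine gap is your global definition of $\RP$. The formula $\RP(a) = \bigvee\{q \in \Proj(A) \mid \forall p \in \Proj(A)\colon ap=a \Rightarrow p \leq q\}$ is not well-formed in a piecewise C*-algebra, because the operation $\bigvee$ supplied by the first hypothesis is only defined on \emph{pairwise commeasurable} sets of projections, and the set appearing here need not be one. First, as literally written the set consists of \emph{upper} bounds of $\{p \mid ap=a\}$; it always contains $1$, so whenever the formula evaluates at all it returns $1$, which is wrong already for $a = e_{11}$ in $N(\M_2(\C))$. Correcting the inequality to $q \leq p$ (lower bounds, whose join should recover the infimum) does not save it: take $a$ a normal, invertible, non-scalar element of $\M_2(\C)$, say $a = \mathrm{diag}(1,2)$; then $\{p \mid ap = a\} = \{1\}$, so the set of lower bounds is all of $\Proj(\M_2(\C))$, which is not pairwise commeasurable, and hypothesis 1 provides no least upper commeasurable bound for it. So neither version defines a function $A \to \Proj(A)$, and consequently your closing claim that ``the analogous restriction statement for $\RP$ follows from its defining formula'' has nothing to rest on. The repair is what the paper itself does: having established that a maximal commeasurable $M \ni a$ is a commutative AW*-algebra, define $\RP(a)$ as the right supporting projection of $a$ computed \emph{inside} $M$. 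What then still deserves a word---and is exactly what your intrinsic formula was trying, unsuccessfully, to secure---is that this value is independent of the choice of $M$; that independence, not a closed formula, is the content carried by the uniqueness lemma preceding the proposition.
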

\begin{proof}
  The first assumption defines a function $\bigvee$. 
  If $S$ is a commeasurable subset, Zorn's lemma provides
  a maximal commeasurable set $M \supseteq S$. By definition of
  piecewise C*-algebra, $M$ is contained in a commeasurable
  C*-algebra. Hence maximality guarantees that $M$ is a commutative
  C*-algebra under the operations of $A$. But now the second assumption
  together with $\bigvee$ make $M$ into an
  AW*-algebra~\cite[Exercise~7.1]{berberian}. Taking 
  $S=\{a\}$, we can define $\RP(a)$ as the unique right supporting
  projection in $M$.
  The functions $\bigvee$ and $\RP$ (uniquely) make $A$ into a piecewise
  AW*-algebra.  
\end{proof}

 There is a similar definition of piecewise
complete Boolean algebras that specializes the definition of piecewise
Boolean algebras~\cite{vdbergheunen:colim}.

\begin{definition}
  A \emph{piecewise complete Boolean algebra} consists of a set $B$ with
  \begin{itemize}
    \item a reflexive and symmetric binary
      (\emph{commeasurability}) relation $\commeas \subseteq B \times B$;
    \item a (total) unary operation $\lnot \colon B \to B$;
    \item a (partial) operation $\bigvee \colon \{ X \subseteq B \mid
      X \times X \subseteq \commeas \} \to B$;
  \end{itemize}
  such that every set $S \subseteq B$ of pairwise commeasurable
  elements is contained in a pairwise commeasurable set $T \subseteq
  B$ that forms a complete Boolean algebra under 
  the above operations.
  (Notice that these data uniquely determine elements $0=\bigvee
    \emptyset$ and $1=\neg 0$, and (partial) operations $x \vee y = \bigvee
    \{x,y\}$ and $x \wedge y = 
    \neg(\neg x \vee \neg y)$.)

  A \emph{morphism of piecewise complete Boolean algebras} is a
  (total) function that preserves commeasurability and all the
  algebraic structure, whenever defined. We write $\PCBoolean$ for the
  resulting category. 
\end{definition}

\subsection*{A piecewise equivalence}
 
The functor $\Proj \colon \AWstar \to \CBoolean$ extends to
a functor $\PAWstar \to
\PCBoolean$~\cite[Lemma~3]{vdbergheunen:colim}. We aim to prove 
that the latter functor is also (part of) an equivalence. 
By~\cite[Theorem~3]{vdbergheunen:colim}, any piecewise complete Boolean
algebra $B$ can be seen as (a colimit of) a functor $\cC(B) \to
\CBoolean$, where $\cC(B)$ is the diagram of (commeasurable)
complete Boolean subalgebras of $B$ and inclusions. Similarly, by the
AW*-variation of~\cite[Theorem~7]{vdbergheunen:colim}, any 
piecewise AW*-algebra $A$ can be seen as a functor $\cC(A) \to
\cAWstar$, where $\cC(A)$ is the diagram of (commeasurable)
commutative AW*-subalgebras of $A$ and inclusions. Hence
postcomposition with $\Func$ should turn a 
piecewise complete Boolean algebra into a piecewise AW*-algebra. Below we
explicitly compute the ensuing colimit to get a functor $F \colon
\PCBoolean \to \PAWstar$. Even though it is unclear how
general coequalizers are computed in either category, the fact that
$\cC(B)$ is a diagram of monomorphisms makes the constructions manageable. 

\begin{lemma}\label{monics}
  The monomorphisms in $\AWstar$, $\cAWstar$, and $\CBoolean$
  are precisely the injective morphisms. 
\end{lemma}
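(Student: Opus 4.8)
The plan is to prove both inclusions, handling the easy one uniformly and the hard one by exhibiting, in each category, a representing object for an appropriate forgetful functor. First I would dispose of the easy direction: each of $\AWstar$, $\cAWstar$, and $\CBoolean$ is concrete with a faithful forgetful functor to $\mathbf{Set}$, given by composition of underlying functions, so any morphism whose underlying function is injective is automatically a monomorphism. It then remains to show that every monomorphism has injective underlying function.

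For $\CBoolean$ I would use that the free Boolean algebra on one generator, $G = \{0, x, \lnot x, 1\}$, is finite and hence complete, and that a morphism $G \to B$ in $\CBoolean$ is determined by the image of $x$, which may be an arbitrary element of $B$. Thus $\CBoolean(G, -)$ is naturally isomorphic to the underlying-set functor. Since covariant hom-functors preserve monomorphisms, a monic $f$ induces an injection $\CBoolean(G, f)$, which under this isomorphism is precisely the underlying function of $f$; hence $f$ is injective.

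For $\AWstar$ and $\cAWstar$ I would argue analogously, but representing the projection functor instead of the full underlying set. The commutative AW*-algebra $\C \oplus \C$ lies in both categories, and a morphism $\C \oplus \C \to A$ is determined by the image of the projection $(1,0)$, which may be any projection of $A$; preservation of suprema is automatic because $\C \oplus \C$ carries only finitely many projections. Hence $\AWstar(\C \oplus \C, -) \cong \Proj(-)$ (and likewise over $\cAWstar$, since the representing object lies in the full subcategory), so a monic $f$ yields an injective $\Proj(f)$. To pass from injectivity on projections to injectivity on all of $A$, I would invoke the last sentence of Lemma~\ref{lem:awstarmorphisms}: the kernel of $f$ is generated by a central projection $z$. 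Then $z$ is a projection with $f(z) = 0 = f(0)$, so injectivity of $\Proj(f)$ forces $z = 0$, whence $\ker f = 0$ and $f$ is injective.

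The main obstacle is not any single computation but pinning down the representability claims and, above all, the reduction from projections to the whole algebra. The crux is the central-projection description of the kernel furnished by Lemma~\ref{lem:awstarmorphisms}; without it one would have to argue directly that injectivity on projections propagates to arbitrary elements (for instance via the clopen/spectral description in the commutative case), which is more delicate. Granting that lemma, the remaining verifications---that a $*$-homomorphism out of $\C \oplus \C$ preserves the (finite) suprema of projections, and that hom-functors preserve monomorphisms---are routine.
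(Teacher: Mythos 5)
Your proof is correct. For $\AWstar$ and $\cAWstar$ it is essentially the paper's argument in different clothing: the paper also detects projections with morphisms out of $\mathbb{C}^2$ (it writes down the two maps $g,h \colon \mathbb{C}^2 \to A$ with $g(1,0)=p$, $h(1,0)=q$ and cancels them against the mono), which is exactly your representability claim $\AWstar(\mathbb{C}\oplus\mathbb{C},-)\cong\Proj(-)$ used pointwise, and it then concludes via the same crux you identify, namely the last sentence of Lemma~\ref{lem:awstarmorphisms} putting $\ker f = zA$ for a central projection $z$. Where you genuinely diverge is $\CBoolean$: the paper does not argue there directly, but factors a mono $f \colon P \rightarrowtail Q$ as $P \cong \Proj(\Func(P)) \rightarrowtail \Proj(\Func(Q)) \cong Q$ and imports injectivity from the $\cAWstar$ case through the equivalence \eqref{eq:commutativeduality}, whereas you represent the underlying-set functor by the free Boolean algebra on one generator $\{0,x,\lnot x,1\}$ (finite, hence complete, with every Boolean homomorphism out of it automatically suprema-preserving) and apply the fact that hom-functors preserve monos. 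Your route is more self-contained and elementary --- it needs no Stone or Gelfand duality --- and it yields injectivity on the whole underlying set of $\CBoolean$ objects in one step; the paper's route buys economy, reusing the equivalence it has already set up and avoiding a separate representing object. Both are sound; the only conventions you tacitly use, that morphisms are unital and that $*$-homomorphisms preserve finite joins of (commuting) projections, are the same ones the paper's own proof relies on.
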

\begin{proof}
  Let $f \colon A \rightarrowtail B$ be a monomorphism in
  $\AWstar$ or $\cAWstar$. We first show that $\Proj(f) \colon \Proj(A)
  \rightarrowtail \Proj(B)$ is injective. Suppose that $f(p)=f(q)$ for
  $p,q \in \Proj(A)$. Define $g,h \colon \mathbb{C}^2 \to A$ by
  $g(1,0)=p$ and $h(1,0)=q$. Then $(f \circ g)(x,y) =
  xf(p)+yf(p)^\perp = (f \circ h)(x,y)$, so $g=h$ and hence $p=q$. In
  particular, $f$ cannot map a nonzero projection of $A$ to 0 in
  $B$. Thus $\ker(f)=0$ by Lemma~\ref{lem:awstarmorphisms}, and $f$ is
  injective. Conversely, injective morphisms are trivially monic.

  Monomorphisms $f \colon P \rightarrowtail Q$ in $\CBoolean$
  factor as 
  \[
    P \cong \Proj(\Func(P)) \rightarrowtail \Proj(\Func(Q)) \cong Q.
  \]
  Now, isomorphisms in $\CBoolean$ are bijective, and
  by the above, the middle arrow $\Proj(\Func(f))$ is injective,
  making $f$ itself injective. 
\end{proof}

We are ready to define the object part of a functor $F \colon
\PCBoolean \to \PAWstar$.

\begin{definition}\label{defFB}
  Let $B$ be a piecewise complete Boolean algebra. Define $F(B)$ to be
  the following collection of data.
  \begin{itemize}
  \item The carrier set $A$ is $(\coprod_{C \in \cC(B)}
    \Func(C)) /\sim$, where $\sim$ is the smallest equivalence relation
    satisfying $f \sim g$ for $f \in \Func(C)$ and $g \in \Func(D)$
    when $C \subseteq D$ and $g=\Func(C \hookrightarrow D)(f)$.
  \item Two equivalence classes $\rho$ and $\sigma$ in $A$ are
    commeasurable if and only if there exist $C \in \cC(B)$ and $f,g \in
    \Func(C)$ such that $f \in \rho$ and $g \in \sigma$.
  \item Notice that $z \cdot 1_C \sim z \cdot 1_D$ for $C \subseteq D$
    in $\cC(B)$, and any $z \in \mathbb{C}$. Also, $\{0,1\}$ is the
    minimal element of $\cC(B)$. Hence $z \cdot 1_C \sim z \cdot 1_D$
    for any $C,D \in \cC(B)$ by transitivity.

    In particular, $[0_{\Func(\{0,1\})}]=[0_C]$ defines an element $0
    \in A$ independently of $C$, and $1 \in A$ is defined by
    $[1_{\Func(\{0,1\})}]=[1_C]$ for any $C \in \cC(B)$. Likewise,
    $z \cdot [f] = [z \cdot f]$ is well-defined for $z \in
    \mathbb{C}$.
  \item Similarly, $[f]^* = [f^*]$ gives a well-defined operation $* \colon A \to A$. 
  \item If $\rho$ and $\sigma$ are two commeasurable elements of $A$,
    then by definition there are $C \in \cC(B)$ and $f,g \in \Func(C)$
    with $f \in \rho$ and $g \in \sigma$. Setting $\rho + \sigma =
    [f+g]$ and $\rho \cdot \sigma = [f \cdot g]$ gives well-defined
    operations $+,\cdot \colon \commeas \to A$.
  \item If $C \subseteq D$, then $\Func(C \hookrightarrow D) \colon \Func(C) \to
    \Func(D)$ is an injective $*$-homomorphism by Lemma~\ref{monics},
    and hence preserves norm~\cite[Theorems~4.1.8,~4.1.9]{kadisonringrose}. 
    So $\|[f]\| = \|f\|$ gives a well-defined operation $A \to
    \mathbb{R}$.
\end{itemize}
\end{definition}

\begin{proposition}
  The data $F(B)$ defined above form a piecewise AW*-algebra.  
\end{proposition}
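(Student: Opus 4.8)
The plan is to verify that the data in Definition~\ref{defFB} satisfy the defining axioms of a piecewise AW*-algebra. The cleanest route is to apply Lemma~\ref{RPandsup}: rather than constructing the operations $\RP$ and $\bigvee$ directly and checking all their properties, I would exhibit a coherent family of AW*-subalgebras covering $F(B)$ and invoke that lemma to supply the AW*-structure automatically. So the real work splits into two parts: first, confirming that the explicitly defined data $(\commeas, 0, 1, *, \cdot, \|\!-\!\|, +, \cdot)$ make $F(B)$ a piecewise C*-algebra; and second, identifying the covering family of commutative AW*-subalgebras and checking the compatibility condition of Lemma~\ref{RPandsup}.

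For the piecewise C*-algebra structure, the natural candidate for the covering family is indexed by $\cC(B)$ itself: for each $C \in \cC(B)$, the image of $\Func(C)$ under the quotient map $\coprod_{C} \Func(C) \to A$ gives a subset $A_C \subseteq A$. First I would check that the quotient map restricted to each $\Func(C)$ is injective, so that $A_C \cong \Func(C)$ as sets; this follows because the equivalence relation $\sim$ only identifies elements across the inclusions $C \hookrightarrow D$, and the transition maps $\Func(C \hookrightarrow D)$ are injective by Lemma~\ref{monics}, so two elements of a single $\Func(C)$ are identified only if they are already equal. Given this, each $A_C$ carries the structure of a commutative AW*-algebra transported from $\Func(C)$, and the well-definedness clauses already recorded in Definition~\ref{defFB} guarantee that the globally defined operations $+$, $\cdot$, $*$, scalar multiplication, and norm restrict correctly to each $A_C$. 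Any pairwise commeasurable subset $S \subseteq A$ lands, by definition of $\commeas$, inside a single $A_C$, which is the required commutative C*-algebra (indeed AW*-algebra) containing it.

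The compatibility condition of Lemma~\ref{RPandsup} then requires that for $S \subseteq S'$ commeasurable, the enclosing subalgebra $T(S) = A_C$ is an AW*-subalgebra of $T(S') = A_{C'}$ whenever $C \subseteq C'$. This is exactly the statement that $\Func(C \hookrightarrow C') \colon \Func(C) \to \Func(C')$ is a morphism in $\cAWstar$, \ie\ that it preserves suprema of projections. Since $\Func = \Cont \circ \Stone$ is (part of) the equivalence of~\eqref{eq:commutativeduality}, it sends the morphism $C \hookrightarrow C'$ of $\CBoolean$—which by definition preserves arbitrary suprema—to a morphism of $\cAWstar$, so this holds. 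One subtlety I would address carefully is that the assignment $S \mapsto T(S)$ in Lemma~\ref{RPandsup} must be genuinely functorial in the coherence sense: I would choose, for each commeasurable $S$, a single $C(S) \in \cC(B)$ containing the images, and verify that enlarging $S$ enlarges $C(S)$ compatibly; the directedness of $\cC(B)$ on commeasurable families makes this routine.

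The main obstacle, and the step deserving the most care, is the injectivity of the quotient map on each $\Func(C)$, equivalently a transitivity analysis of the equivalence relation $\sim$. Because $\sim$ is generated by relations running along inclusions in both directions, an a priori identification $f \sim g$ with $f, g \in \Func(C)$ could in principle be witnessed by a zig-zag passing through several intermediate algebras $D_1, D_2, \dots$ rather than a single inclusion. To rule this out I would use that $\cC(B)$ is a directed diagram of monomorphisms: any such zig-zag can be completed to a common upper bound $D \supseteq C, D_1, \dots$, and injectivity of all the transition maps (Lemma~\ref{monics}) forces $f$ and $g$ to have equal image in $\Func(D)$, hence $f = g$ already in $\Func(C)$. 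Once this is secured, every remaining verification is routine transport of structure along the isomorphisms $A_C \cong \Func(C)$.
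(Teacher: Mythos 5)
Your overall plan (set up the piecewise C*-structure, then invoke Lemma~\ref{RPandsup}) coincides with the paper's, but the step you treat as immediate is where the entire content of the proposition lies, and your justification for it does not work. You assert that any pairwise commeasurable subset $S \subseteq A$ lands, ``by definition of $\commeas$'', inside a single $A_C$. The commeasurability of Definition~\ref{defFB} is a \emph{binary} relation: for each pair $\rho, \sigma \in S$ it supplies some $C_{\rho\sigma} \in \cC(B)$ containing representatives of both, but nothing in the definition produces one $C$ that works for all of $S$ simultaneously. Promoting pairwise commeasurability to joint commeasurability is exactly the piecewise axiom you are supposed to verify, and the only possible source for it is the corresponding axiom of $B$ itself --- which your argument never invokes. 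This is precisely the work the paper's proof does: it attaches to each class $\rho$ the minimal algebra $C_\rho = \bigcap \{ C \in \cC(B) \mid \rho \cap \Func(C) \neq \emptyset \}$, which lies in $\cC(B)$ because $\cC(B)$ is closed under intersections; it checks that pairwise commeasurability of $S$ makes $\hat{S} = \bigcup_{\rho \in S} C_\rho$ pairwise commeasurable in $B$; and only then does the defining axiom of the piecewise complete Boolean algebra $B$ yield a commeasurable complete Boolean subalgebra $\hat{T} \supseteq \hat{S}$, so that $T = \{ [f] \mid f \in \Func(\hat{T}) \}$ is a single member of your covering family containing all of $S$. Without some version of this argument your proof has a hole at its center.

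A second, related error: both your injectivity argument and your coherence argument appeal to directedness of $\cC(B)$, and $\cC(B)$ is \emph{not} directed. If $B$ contains two non-commeasurable elements --- for instance $B = \Proj(\M_2(\C))$ with commutativity as commeasurability --- then two maximal commeasurable complete Boolean subalgebras have no common upper bound in $\cC(B)$; indeed, directedness of $\cC(B)$ would force $B$ to be a complete Boolean algebra outright, trivializing the whole construction. So a zig-zag witnessing $f \sim g$ cannot in general be completed to a single $D \in \cC(B)$. The two conclusions you wanted are nevertheless true, for different reasons. Injectivity of $f \mapsto [f]$ on each $\Func(C)$ follows from the well-definedness clauses already recorded in Definition~\ref{defFB}: if $[f] = [g]$ with $f, g \in \Func(C)$, then $[f-g] = [f] - [g] = 0$, whence $\|f - g\| = \|[f-g]\| = 0$ and $f = g$. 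And the coherence demanded by Lemma~\ref{RPandsup} comes not from directedness but from a canonical minimal choice: setting $\hat{T}(S) = \bigcap \{ D \in \cC(B) \mid \hat{S} \subseteq D \}$, closure of $\cC(B)$ under intersections makes $S \mapsto T(S)$ monotone, which is exactly what that lemma requires.
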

\begin{proof}
  For $\rho$ in $A$, define
  $
    C_\rho = \bigcap \{ C \in \cC(B) \mid \rho \cap \Func(C) \neq
    \emptyset \}.
  $
  Because $\cC(B)$ is closed under arbitrary intersections, $C_\rho \in
  \cC(B)$. If $\rho$ and $\sigma$ in $A$ are commeasurable, then by
  definition there are $C \in \cC(B)$ and $f,g \in \Func(C)$, so
  $C_\rho \subseteq C \supseteq C_\sigma$. But that implies any
  element of $C_\rho$ is commeasurable in $B$ with any element of $C_\sigma$.

  Let $S \subseteq A$ be pairwise commeasurable. Then $\hat{S} =
  \bigcup_{\rho \in S} C_\rho \subseteq B$ is pairwise
  commeasurable by the last paragraph. Hence there exists a set $\hat{T} \subseteq B$ that
  contains $\hat{S}$, is pairwise commeasurable, and forms a complete
  Boolean algebra under the operations from $B$. Therefore
  $T=\{ [f] \mid f \in \Func(\hat{T}) \} \subseteq A$ contains $S$, is
  commeasurable, and forms a commutative AW*-algebra under the
  operations from $A$.
  Hence $A$ is a piecewise C*-algebra. Moreover, if $S \subseteq S'$,
  then $\hat{S} \subseteq \hat{S'}$, and $\hat{T} \subseteq
  \hat{T'}$ are both complete Boolean subalgebras of $B$ under the
  same operation $\bigvee$, namely that of $B$. Hence $T$ is an
  AW*-subalgebra of $T'$, so that $A$ is in fact a piecewise
  AW*-algebra by Lemma~\ref{RPandsup}.
\end{proof}

\begin{lemma}\label{Fcolim}
  If $B \in \PCBoolean$, then $F(B)$ is a colimit of the diagram
  $\Func(C)$ with $C$ ranging over $\cC(B)$. Therefore $F$ is
  functorial $\PCBoolean \to \PAWstar$. 
\end{lemma}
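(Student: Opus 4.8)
The plan is to verify directly that $F(B)$, together with the maps $\iota_C \colon \Func(C) \to F(B)$ sending $f \mapsto [f]$, is a colimiting cocone in $\PAWstar$ for the diagram $C \mapsto \Func(C)$, where $C$ ranges over $\cC(B)$ and $\Func(C)$ is regarded as a piecewise AW*-algebra via the inclusion $\cAWstar \hookrightarrow \PAWstar$. First I would check that each $\iota_C$ is a morphism of piecewise AW*-algebras: preservation of $+$, $\cdot$, scalar multiplication, and $*$ is immediate from Definition~\ref{defFB}, since these operations on $F(B)$ are defined using representatives drawn from a common $\Func(C)$; preservation of $\bigvee$ holds because $\Func(C)$ is itself one of the covering subalgebras used in the preceding proposition, so that a supremum of a commeasurable family of projections computed in $\Func(C)$ agrees with the one computed in $F(B)$. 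Compatibility with the inclusions, $\iota_D \circ \Func(C \hookrightarrow D) = \iota_C$, is exactly the defining relation of $\sim$.

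For the universal property, suppose $\{\phi_C \colon \Func(C) \to X\}_{C \in \cC(B)}$ is any cocone in $\PAWstar$. I would define $\psi \colon F(B) \to X$ by $\psi([f]) = \phi_C(f)$ for $f \in \Func(C)$. This is well defined because $\sim$ is generated by pushforward along inclusions, and the cocone condition $\phi_D \circ \Func(C \hookrightarrow D) = \phi_C$ makes $\phi$ constant along every generating step, hence along every zigzag witnessing $[f]=[g]$. Checking that $\psi$ is a $\PAWstar$-morphism then splits into the finitary structure and the operation $\bigvee$: for commeasurability and the binary and unary operations, finitely many commeasurable elements of $F(B)$ have representatives in a single $\Func(C)$, so the required identities follow from $\phi_C$ being a morphism. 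Uniqueness is automatic, since any mediating $\psi'$ must satisfy $\psi'([f]) = \psi'(\iota_C(f)) = \phi_C(f)$.

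The step demanding the most care, and the main obstacle, is preservation of $\bigvee$ by $\psi$, precisely because the diagram $\cC(B) \to \PAWstar$ is \emph{not} filtered: two commeasurable subalgebras need not embed in a common one, so one cannot reduce an arbitrary supremum to a single stage by the usual filtered-colimit argument. Here I would invoke the covering property established in the preceding proposition: a pairwise commeasurable family $\{[p_i]\}$ of projections in $F(B)$, together with its supremum, is realized inside a single commutative AW*-subalgebra $\Func(\hat T)$. Computing $\bigvee [p_i]$ there as $[\bigvee p_i]$ and applying $\phi_{\hat T}$, which preserves $\bigvee$ as a $\PAWstar$-morphism, yields $\psi(\bigvee [p_i]) = \bigvee \psi([p_i])$. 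This reduction of an arbitrary commeasurable family to one stage of the diagram is the feature that makes the non-filtered colimit computable, and is the conceptual heart of the construction.

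Finally, functoriality of $F$ follows formally from the colimit property. Given a morphism $\phi \colon B \to B'$ in $\PCBoolean$, each $C \in \cC(B)$ has pairwise commeasurable image $\phi(C)$, hence lies in some $C' \in \cC(B')$, and $\phi$ restricts to a morphism $C \to C'$ in $\CBoolean$; the composite $\iota_{C'} \circ \Func(\phi|_C)$ then defines a cocone on the diagram for $B$ with values in $F(B')$. Using closure of $\cC(B')$ under intersection, this composite is independent of the choice of $C'$ by the $\sim$-identifications in $F(B')$, and it is compatible with the inclusions of $\cC(B)$. The universal property of $F(B)$ produces a unique $F(\phi) \colon F(B) \to F(B')$, and preservation of identities and composites follows at once from the uniqueness clause of that universal property.
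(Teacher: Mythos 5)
Your proposal is correct and takes essentially the same route as the paper: the same cocone $f \mapsto [f]$, the same mediating map $[f] \mapsto \phi_C(f)$, and functoriality deduced from the universal property of the fixed colimit. The paper compresses the verifications into a single ``clearly,'' and your expansion of them --- in particular invoking the covering algebras $\Func(\hat T)$ to handle $\bigvee$ in this non-filtered diagram --- is precisely what that ``clearly'' tacitly relies on.
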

\begin{proof}
  Clearly there exists a cocone of morphisms $\Func(C) \to A$ for
  each $C \in \cC(B)$, given by $f \mapsto [f]$.
  If $k_C \colon \Func(C) \to A'$ is another cocone, the unique
  mediating map $m \colon A \to A'$ is given by $m([f])=k_C(f)$ when
  $f \in \Func(C)$.

  Let $g \colon B_1 \to B_2$ be a morphism of $\PCBoolean$.
  Because $F(B_1)$ is a colimit of $\{\Func(C) \mid C \in \cC(B_1)\}$,
  to define a morphism $F(g) \colon F(B_1) \to F(B_2)$, it suffices to
  specify morphisms $\Func(C) \to F(B_2)$ in $\PAWstar$ for each
  $C \in \cC(B_1)$. But $g$ preserves commeasurability, so its restriction to
  $C$ is a morphism in $\CBoolean$ and we can just take
  $F(g)\big|_{\Func(C)} = \Func(g\big|_C)$. This assignment is automatically functorial.
  Moreover, it is well-defined, even though colimits are only unique
  up to isomorphism, because Definition~\ref{defFB} fixed one specific
  colimit.
\end{proof}

\begin{theorem}\label{thm:piecewiseequivalence}
  The functors $F$ and $\Proj$ form an equivalence between the categories
  $\PAWstar$ and $\PCBoolean$.
\end{theorem}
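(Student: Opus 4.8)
The plan is to exhibit natural isomorphisms $\Proj \circ F \cong \mathrm{Id}_{\PCBoolean}$ and $F \circ \Proj \cong \mathrm{Id}_{\PAWstar}$, leveraging the commutative equivalence $(\Proj, \Func)$ of~\eqref{eq:commutativeduality} on each commutative piece and then gluing along the colimit presentations of both kinds of piecewise structure.

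For the first isomorphism, fix $B \in \PCBoolean$. By Definition~\ref{defFB} the carrier of $F(B)$ is $(\coprod_{C \in \cC(B)} \Func(C))/\sim$, and a class $[f]$ is a projection precisely when $f$ may be represented by a projection in some $\Func(C)$, \ie\ by an element of $\Proj(\Func(C)) \cong C$. Hence the inclusions $C \cong \Proj(\Func(C)) \hookrightarrow \Proj(F(B))$ assemble into an isomorphism $B \xrightarrow{\sim} \Proj(F(B))$ of piecewise complete Boolean algebras: the commeasurability relation and the operations $\lnot$ and $\bigvee$ all match up because each is computed inside the complete Boolean subalgebras $C$, over which $B$ is the colimit by~\cite[Theorem~3]{vdbergheunen:colim}. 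Naturality in $B$ is immediate from the description of $F(g)$ in Lemma~\ref{Fcolim} as acting by $\Func(g|_C)$ on each piece.

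For the reverse composite, the key observation is that the diagram $\cC(A)$ of commutative AW*-subalgebras of a piecewise AW*-algebra $A$ is isomorphic, as a poset, to the diagram $\cC(\Proj(A))$ of complete Boolean subalgebras of $\Proj(A)$. The mutually inverse assignments are $C \mapsto \Proj(C)$ and $D \mapsto \Func(D)$, the latter realized inside $A$ as the commutative AW*-subalgebra generated by $D$; these are inverse to one another by the commutative equivalence, and both manifestly preserve inclusions. In particular every commutative AW*-subalgebra $C$ of $A$ satisfies $\Func(\Proj(C)) \cong C$. Now $A$ is the colimit of the inclusion diagram $\cC(A) \to \cAWstar$ by the AW*-variant of~\cite[Theorem~7]{vdbergheunen:colim}, while $F(\Proj(A))$ is, by Lemma~\ref{Fcolim}, the colimit of $D \mapsto \Func(D)$ over $\cC(\Proj(A))$. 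Transporting the latter diagram along the poset isomorphism turns it into $C \mapsto \Func(\Proj(C)) \cong C$ over $\cC(A)$, which is exactly the diagram presenting $A$; comparing colimits yields an isomorphism $F(\Proj(A)) \xrightarrow{\sim} A$ sending the class $[f]$ of $f \in \Func(\Proj(C))$ to the corresponding element of $C \subseteq A$.

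I expect the main obstacle to be the bookkeeping needed to make these identifications genuinely \emph{natural} rather than merely pointwise: one must check that the pointwise isomorphisms $\Func(\Proj(C)) \cong C$ are compatible with the inclusion maps in both diagrams, so that they constitute a morphism of cocones and hence of colimits, and that the comparison maps commute with arbitrary morphisms of piecewise AW*-algebras and of piecewise complete Boolean algebras. Via the colimit presentations this reduces to the already-established naturality of the commutative equivalence~\eqref{eq:commutativeduality} together with the explicit formula for $F$ on morphisms from Lemma~\ref{Fcolim}; the only genuinely new ingredient is the poset isomorphism $\cC(A) \cong \cC(\Proj(A))$, which rests on the fact that a complete Boolean subalgebra of $\Proj(A)$ and the commutative AW*-subalgebra of $A$ that it generates determine one another.
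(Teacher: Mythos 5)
Your proposal is correct and follows essentially the same route as the paper: the composite $F\circ\Proj\cong\mathrm{Id}$ is handled by exactly the same chain of colimit identifications (Lemma~\ref{Fcolim}, reindexing along the poset isomorphism $\cC(A)\cong\cC(\Proj(A))$, the commutative equivalence $\Func\circ\Proj\cong\mathrm{Id}$, and the colimit presentation $A\cong\colim_{C\in\cC(A)}C$), while $\Proj\circ F\cong\mathrm{Id}$ is obtained by gluing the unit isomorphisms $C\cong\Proj(\Func(C))$ over the pieces, just as in the paper's explicit construction via $\eta$. The only cosmetic difference is that you argue the poset isomorphism $\cC(A)\cong\cC(\Proj(A))$ directly, where the paper cites \cite[Proposition~6]{vdbergheunen:colim}.
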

\begin{proof}
  For a piecewise AW*-algebra $A$ we have
  \begin{align*}
    F(\Proj(A))
    & \cong \colim_{C \in \cC(\Proj(A))} \Func(C)  \\
    & \cong \colim_{C \in \cC(A)} \Func(\Proj(C)) \\
    & \cong \colim_{C \in \cC(A)} C 
       \cong A.
  \end{align*}
  by Lemma~\ref{Fcolim}, \cite[Proposition~6]{vdbergheunen:colim}, and
  \cite[Theorem~7]{vdbergheunen:colim}. 
  Each of the above isomorphisms is readily seen to be natural in $A$.

  Next we establish an isomorphism $\Proj(F(B)) \cong B$. 
  Let $\rho \in \Proj(F(B)) \subseteq F(B)$. If $\rho = [f]$ for $f \in
  \Func(C)$ and $C \in \cC(B)$, then $f \in \Proj(\Func(C))$. So $\eta_C(f) \in C
  \subseteq B$, where $\eta$ is the unit of the equivalence formed by
  $\Proj$ and $\Func$. In fact, by naturality of $\eta$, if $C \subseteq D$
  for another $D \in \cC(B)$ with the inclusion denoted by $i \colon C
  \hookrightarrow D$, the following diagram commutes.
  \[\xymatrix@R-2ex{
    & \Proj(\Func(C)) \ar^-{\eta_C}[r] \ar_-{\Proj(\Func(i))}[d]
    & C \ar@{^{(}->}^-{i}[d] \\
    & \Proj(\Func(D)) \ar_-{\eta_D}[r]
    & D
  }\]
  So if $g \sim f$ because $g=\Func(C \hookrightarrow D)(f)$, then
  $\eta_C(f)=\eta_D(g)$. That is, $\eta_C(f)$ is independent of the
  chosen representative $f$ of $\rho$. Thus we have a map $\Proj(F(B))
  \to B$ that is a morphism of piecewise complete Boolean algebras,
  because $\eta$ is a morphism of complete Boolean algebras.

  Conversely, for $b \in B$, consider the commeasurable subalgebra
  $\generated{B}{b}$ of $B$ generated by $b$.
  Then $\eta^{-1}_{\generated{B}{b}}(b)$ is an
  element of $\Proj(\Func(\generated{B}{b}))$. Thus
  $b \mapsto [\eta^{-1}_{\generated{B}{b}}(b)]$ is a function $B \to
  \Proj(F(B))$, that is easily seen to be inverse to the function
  $\Proj(F(B)) \to B$ above. Thus we have an isomorphism $B \cong
  \Proj(F(B))$ of piecewise complete Boolean algebras. Unfolding
  definitions shows that this isomorphism is natural in $B$.
\end{proof}

As a consequence, the functor $\Proj$ preserves general coequalizers.

\section{The category of active lattices}
\label{sec:activelattices}

This section equips the piecewise AW*-algebra structure of
AW*-algebras $A$ with enough extra data to recover their full algebra
structure, which will be done in the next section. The required
structure consists of three ingredients: a lattice structure on
$\Proj(A)$, a group structure on the so-called symmetry subgroup of
the unitaries $U(A)$, and an action of the latter on the former. We
will discuss each in turn.

\subsection*{The projection lattice}

We start with some axioms satisfied by lattices of projections of AW*-algebras.

\begin{definition}
  An \emph{orthocomplementation} on a lattice $P$ is an
  order-reversing involution $p \mapsto p^{\perp}$ satisfying 
  $p \vee p^{\perp} = 1$ and $p \wedge p^{\perp} = 0$ (\ie, $p^{\perp}$
  is a \emph{complement} of $p$). We say $p$ and $q$
  are \emph{orthogonal} when $p \leq q^\perp$. 
  An orthocomplemented lattice is said to be
  \emph{orthomodular} when
  $p \vee (p^{\perp} \wedge q) = q$ for all $p \leq q$.
  Complete orthomodular lattices form a category $\cOML$ 
  whose morphisms are functions that preserve the
  orthocomplementation as well as arbitrary suprema. 
\end{definition}

The condition of being an object of $\cOML$
can be tested on orthogonal subsets, and the same is
nearly true for morphisms. 

\begin{lemma}\label{lem:orthocomplete}
  An orthomodular lattice $P$ is complete if and only if every 
  orthogonal subset of $P$ has a least upper bound. If $P$ and $Q$
  are complete orthomodular lattices, a function $f \colon P \to Q$ is
  a morphism of $\cOML$ if and only if it preserves orthocomplements,
  binary joins, and suprema of orthogonal sets. 
\end{lemma}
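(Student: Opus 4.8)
The plan is to treat both biconditionals by reducing arbitrary suprema to suprema of orthogonal families through a transfinite orthogonalization procedure, with the orthomodular law doing the essential work. The easy directions are immediate in both cases: a complete lattice has all suprema, and a $\cOML$-morphism preserves orthocomplements and all suprema (hence in particular binary joins and orthogonal suprema), so only the converse implications require argument.

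For the first statement, assume every orthogonal subset of $P$ has a least upper bound. I would prove by transfinite induction on an ordinal $\lambda$ that every family $\{p_\xi\}_{\xi<\lambda}$ admits a supremum. The zero and successor cases are handled by the bottom element $0$ and by binary joins, which exist since $P$ is a lattice. The limit case is the crux: writing $s_\beta=\bigvee_{\xi<\beta}p_\xi$ and $u_\beta=\bigvee_{\alpha<\beta}s_\alpha$, both of which exist for $\beta<\lambda$ by the induction hypothesis, I set $t_\beta=s_\beta\wedge u_\beta^{\perp}$. Orthomodularity applied to $u_\beta\le s_\beta$ gives $s_\beta=u_\beta\vee t_\beta$ with $t_\beta\le u_\beta^{\perp}$, and a short computation then shows that $\{t_\beta\}_{\beta<\lambda}$ is an orthogonal family and that $s_\beta=\bigvee_{\alpha\le\beta}t_\alpha$. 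The orthogonal supremum $t=\bigvee_{\beta}t_\beta$ exists by hypothesis, and one checks it is the least upper bound of all $s_\beta$, hence of all $p_\xi$, completing the induction; feeding this back through the successor recursion yields suprema of arbitrary subsets.

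For the second statement, assume $f$ preserves orthocomplements, binary joins, and orthogonal suprema; I would show it preserves arbitrary suprema by running the same induction and transporting it through $f$. Preservation of $0=\bigvee\emptyset$ and of $1=0^{\perp}$ comes for free, successor stages use preservation of binary joins, and at limit stages I would reuse the orthogonal decomposition $s=\bigvee_\beta t_\beta$ from the first part. The key point is that, since $f$ preserves orthocomplements and is monotone (being join-preserving), it sends the orthogonal family $\{t_\beta\}$ to an orthogonal family $\{f(t_\beta)\}$; preservation of orthogonal suprema then gives $f(s)=\bigvee_\beta f(t_\beta)$, and comparing this with $\bigvee_\xi f(p_\xi)$ using $t_\beta\le s_{\beta+1}$ together with the induction hypothesis yields the desired equality.

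The main obstacle is the limit stage of the induction: producing the orthogonal family $\{t_\beta\}$ and verifying that its orthogonal join recovers the join of the original ascending chain. This is exactly where orthomodularity is indispensable, as it is what lets us peel off $t_\beta=s_\beta\wedge u_\beta^{\perp}$ while retaining $u_\beta\vee t_\beta=s_\beta$. For the morphism statement the additional subtlety is checking that orthogonality is preserved under $f$, which is precisely why preservation of orthocomplements, and not merely of orthogonal joins, must be assumed among the hypotheses.
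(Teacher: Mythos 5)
Your proof is correct, but it takes a genuinely different route from the paper's. For the first claim the paper simply cites Holland's theorem (Corollary~1 of the reference \texttt{holland:orthocomplete}), whereas you reprove it from scratch by transfinite induction, using orthomodularity to disjointify the increasing chain $s_\beta$ into orthogonal increments $t_\beta = s_\beta \wedge u_\beta^{\perp}$; this is essentially the classical argument behind the cited result, and your verification sketch (orthogonality of the $t_\beta$, and $s_\beta = \bigvee_{\alpha \leq \beta} t_\alpha$ by a nested induction) does go through. The real divergence is in the morphism statement: the paper avoids ordinals entirely and instead applies Zorn's lemma, choosing a maximal orthogonal family $\{e_\alpha\}$ of nonzero elements of $P$ satisfying $f(e_\alpha) \leq \bigvee f(p_i)$ and setting $e = \bigvee e_\alpha$; if some $p_j \nleq e$, orthomodularity makes $e' = (p_j \vee e) \wedge e^{\perp}$ a nonzero element orthogonal to $e$ with $f(e') \leq f(p_j) \vee f(e) \leq \bigvee f(p_i)$, contradicting maximality. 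Your induction reuses the disjointification machinery from the first half, which makes the whole lemma self-contained and uniform, at the price of ordinal bookkeeping (well-ordering the index set and the nested induction above); the paper's maximality argument is shorter and dodges that bookkeeping, but leans on an external citation for the first half. One small correction to your closing remark: preservation of orthogonality under $f$ is not actually needed anywhere, since the hypothesis ``preserves suprema of orthogonal sets'' quantifies over sets that are orthogonal in the domain $P$; preservation of orthocomplements must be assumed simply because it is part of the definition of a $\cOML$-morphism (indeed, the paper's proof of the second claim never invokes it).
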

\begin{proof}
  The first statement is~\cite[Corollary~1]{holland:orthocomplete}.
  Let $f \colon P \to Q$ be as in the second statement, and let
  $\{p_i\}$ be any subset of $P$. Because $f$ preserves finite joins,
  it preserves order, and so $\bigvee f(p_i) \leq f(\bigvee p_i)$; we
  prove the reverse comparison. 
  Let $\{e_\alpha\}$ be a maximal orthogonal set of nonzero elements of $P$
  with $f(e_\alpha) \leq \bigvee f(p_i)$, and set $e = \bigvee e_\alpha$.
  By hypothesis, $f(e) = \bigvee f(e_\alpha) \leq \bigvee f(p_i)$.
  Thus it suffices to show that each $p_i \leq e$, for then $\bigvee p_i \leq e$
  and $f(\bigvee p_i) \leq f(e) \leq \bigvee f(p_i)$ as desired. Assume for
  contradiction that some $p_j \nleq e$. Then $e' = (p_j \vee e) \wedge e^\perp$
  is a nonzero element of $P$ orthogonal to $e$ and hence orthogonal
  to each $e_\alpha$. Furthermore
  \[
  f(e') \leq f(p_j \vee e) = f(p_j) \vee f(e) \leq \bigvee f(p_i),
  \]
  since $e' \leq p_j \vee e$. But this contradicts the maximality of $\{e_\alpha\}$.
\end{proof}

The axioms defining AW*-algebras and their morphisms are such that the
operation of passing to projection lattices defines a functor $\Proj
\colon \AWstar \to \cOML$. 

Complete orthomodular lattices are tightly linked to piecewise
complete Boolean algebras (rather than the more general
orthocomplemented lattices).
Indeed, any complete orthomodular lattice $P$ canonically is a
piecewise complete Boolean algebra, as follows. Define a
commeasurability relation  
$\odot$ on $P$ by the following equivalent
conditions, for any $p, q \in P$: 
\begin{enumerate}[\quad (i)]
\item there is a Boolean subalgebra of $P$ that contains both
  $p$ and $q$;
\item there exist pairwise orthogonal $p', q', r \in P$ with
  $p = p' \vee r$ and $q = q' \vee r$;
\item $p \wedge (p \wedge q)^\perp$ is orthogonal to $q$;
\item $q \wedge (p \wedge q)^\perp$ is orthogonal to $p$;
\item the \emph{commutator} 
  $(p \vee q) \wedge (p \vee q^\perp) \wedge (p^\perp \vee q)
  \wedge (p^\perp \vee q^\perp)$ of $p$ and $q$ is zero.
\end{enumerate}
For the equivalence of (i)--(iv) we refer
to~\cite[Lemma~6.7]{varadarajan:geometry1}; for the equivalence of
(i) and (v) see~\cite{marsden:commutator}.

\begin{lemma}\label{lem:piecewiseorthomodular}
  The assignment $P \mapsto (P,\odot)$ is a functor
  $\cOML \to \PCBoolean$.
\end{lemma}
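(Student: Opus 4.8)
The plan is to establish the three things that make $P \mapsto (P,\odot)$ a functor: that $(P,\odot)$ is genuinely an object of $\PCBoolean$, that every $\cOML$-morphism $f \colon P \to Q$ is automatically a $\PCBoolean$-morphism, and that the assignment respects identities and composition. The last point is free, since the functor acts as the identity on underlying functions; so the work lies in the first two items.

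For the object part I would first note that $\odot$ is reflexive, because any single $p$ lies in the Boolean subalgebra $\{0,p,p^\perp,1\}$, and symmetric, because each of the conditions (i)--(v) is symmetric in $p$ and $q$. I would take $\lnot$ to be the orthocomplementation $(-)^\perp$ and let $\bigvee$ be the supremum inherited from the complete lattice $P$. To verify the defining axiom of a piecewise complete Boolean algebra, I would start from a pairwise commeasurable set $S$ and use Zorn's lemma to enlarge it to a maximal pairwise commeasurable set $M \supseteq S$. The crux is then that $M$ is a complete Boolean subalgebra whose operations agree with $\lnot$ and $\bigvee$: writing $C(a)=\{x \in P \mid x \odot a\}$ for the commutant, the standard fact that in a complete orthomodular lattice each $C(a)$ is closed under $(-)^\perp$ and under arbitrary suprema computed in $P$ shows that $M=\bigcap_{a \in M} C(a)$ is a complete suborthomodular lattice, with maximality forcing closure under $\lnot$ and $\bigvee$. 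Since every two elements of $M$ commute, $M$ is a complete Boolean algebra, because an orthomodular lattice in which all pairs commute is Boolean. Thus $T=M$ witnesses the axiom.

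For the morphism part, let $f \colon P \to Q$ be a morphism of $\cOML$, so that $f$ preserves $(-)^\perp$ and arbitrary suprema; in particular it preserves binary joins, hence the order, and it sends orthogonal pairs to orthogonal pairs, since $p \leq q^\perp$ gives $f(p) \leq f(q^\perp)=f(q)^\perp$. Preservation of $\lnot$ and of the partial operation $\bigvee$ is then immediate from the definition of $\cOML$-morphisms, so it remains only to show that $f$ preserves $\odot$. Here I would invoke characterization (ii): given $p \odot q$, choose pairwise orthogonal $p',q',r$ with $p=p'\vee r$ and $q=q'\vee r$; applying $f$ yields pairwise orthogonal elements $f(p'),f(q'),f(r)$ with $f(p)=f(p')\vee f(r)$ and $f(q)=f(q')\vee f(r)$, whence $f(p)\odot f(q)$ by (ii) read in $Q$. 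This is the one genuinely new computation, and it is short.

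I expect the main obstacle to be the object-level completeness bookkeeping rather than the morphism part: specifically, justifying that commutants in a complete orthomodular lattice are closed under arbitrary suprema, so that the maximal commeasurable set $M$ is a \emph{complete} Boolean subalgebra and its $\bigvee$ coincides with the one inherited from $P$. Once this standard structural fact is in hand, everything else --- reflexivity and symmetry of $\odot$, the identification of $M$ as Boolean, and preservation of all structure by $f$ --- falls out cleanly, and functoriality is immediate because the assignment does not alter the underlying functions.
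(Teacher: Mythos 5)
Your proof is correct, and it is considerably more self-contained than the paper's, which disposes of this lemma in a single sentence by citing Varadarajan's Lemma~6.10 for the fact that the supremum operation of $P$ restricts to a partial operation on pairwise commeasurable subsets (leaving reflexivity, symmetry, the morphism condition, and functoriality implicit). The mathematical core is the same in both cases --- pairwise commuting elements of a complete orthomodular lattice sit inside a complete Boolean subalgebra on which $(-)^\perp$ and $\bigvee$ restrict --- but where the paper outsources this to the literature, you reconstruct it: Zorn's lemma produces a maximal commeasurable set $M$, the identity $M=\bigcap_{a\in M}C(a)$ together with closure of commutants under $(-)^\perp$ and arbitrary suprema shows that $M$ is a complete subortholattice, and Foulis--Holland distributivity makes it Boolean. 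Your one borrowed ingredient, closure of commutants under arbitrary existing suprema, is essentially the content of the result the paper cites, so you have not smuggled in anything stronger. You also supply the part of the argument the paper skips entirely: that an $\cOML$-morphism preserves $\odot$, which you obtain cleanly from characterization (ii), since such morphisms preserve order, orthogonality, and joins. The trade-off is the usual one: the paper's citation is economical, while your version makes visible exactly which structural facts about orthomodular lattices the lemma depends on, and it covers the morphism half of functoriality, which is genuinely needed for the statement and appears nowhere in the paper's proof.
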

\begin{proof}
  Given a complete orthomodular lattice $P$ and the commeasurability
  relation $\odot$ above, it follows from~\cite[Lemma~6.10]{varadarajan:geometry1}
  that the supremum operation of $P$ restricts to a partial operation
  $\bigvee \colon \{X \subseteq P \mid X \times X \subseteq \odot\} \to P$.
\end{proof}

Composing this forgetful functor with the equivalence $\PCBoolean \to
\PAWstar$ of Theorem~\ref{thm:piecewiseequivalence} gives a
canonical functor $\cOML \to \PAWstar$.
Below, we will extend the structure of the piecewise complete
Boolean algebra $\Proj(A)$ to that of a complete orthomodular lattice,
where $A$ is a piecewise AW*-algebra. As a converse to the above
lemma, we now show that this is a property rather than structure.

For any piecewise Boolean algebra $B$, let $\leq$ be the union of the
partial orders on each commeasurable subalgebra $C$ of $B$.
When this relation is transitive, it is a partial order, which we call
the induced partial order. In that case we call $B$ \emph{transitive}.
If every pair of (not necessarily commeasurable) elements of $B$
have a least upper bound with respect to $\leq$, we say that $B$
is \emph{joined}.
Similarly, we call a piecewise AW*-algebra $A$ transitive or joined
when $\Proj(A)$ is respectively transitive or joined.

\begin{proposition}
  The following categories are equivalent:
  \begin{enumerate}[\quad (a)]
  \item the category $\cOML$ of complete orthomodular lattices;
  \item the subcategory of $\PCBoolean$ whose objects are transitive and joined
    and whose morphisms preserve binary joins;
  \item the subcategory of $\PAWstar$ whose objects are transitive and joined
    and whose morphisms preserve binary joins of projections.
  \end{enumerate}
\end{proposition}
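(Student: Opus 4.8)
The plan is to establish the two equivalences (b) $\simeq$ (c) and (a) $\simeq$ (b) separately, and then compose them to obtain (a) $\simeq$ (c). The first is almost formal. Since transitivity and joinedness of a piecewise AW*-algebra $A$ are by definition properties of $\Proj(A)$, and since a morphism of piecewise AW*-algebras preserves a binary join of projections exactly when its restriction to $\Proj$ does, the equivalence $F \dashv \Proj$ of Theorem~\ref{thm:piecewiseequivalence} restricts to an equivalence between the subcategories in (b) and (c). I would just verify that $F$ sends transitive, joined objects to transitive, joined objects, which is immediate from the natural isomorphism $\Proj(F(B)) \cong B$.

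For (a) $\simeq$ (b) I would use the functor $\Phi \colon \cOML \to \PCBoolean$, $P \mapsto (P, \commeas)$, of Lemma~\ref{lem:piecewiseorthomodular}, and show it is full, faithful, and essentially surjective onto the subcategory in (b). Faithfulness is clear, as $\Phi$ is the identity on underlying sets and functions. To see that $\Phi$ lands in (b), note that for a complete orthomodular lattice $P$ the order on each commeasurable Boolean subalgebra is the restriction of the lattice order, while comparable elements are commeasurable; hence the induced order coincides with the lattice order and $(P,\commeas)$ is transitive, and it is joined because $P$ has all binary joins. A $\cOML$-morphism preserves all suprema, in particular binary joins, so it lands in (b). Fullness then follows from Lemma~\ref{lem:orthocomplete}: a morphism in (b) preserves orthocomplements (the total operation $\lnot$) and binary joins by assumption, and it preserves suprema of orthogonal families because orthogonal elements are commeasurable and morphisms of $\PCBoolean$ preserve the partial operation $\bigvee$; hence it preserves all suprema and is a morphism of $\cOML$.

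The main work, and the step I expect to be the principal obstacle, is essential surjectivity: from a transitive, joined $B \in \PCBoolean$ I must reconstruct a complete orthomodular lattice. I would take the carrier of $B$ with the induced partial order $\leq$ (a partial order by transitivity) and orthocomplementation $\lnot$. Joinedness supplies binary joins, and since $\lnot$ is an order-reversing involution it carries these to genuine greatest lower bounds $x \wedge y = \lnot(\lnot x \vee \lnot y)$; a short argument on lower bounds inside a common Boolean subalgebra shows $p \wedge \lnot p = 0$ and dually $p \vee \lnot p = 1$, so the orthocomplementation axioms hold. The orthomodular law for $p \leq q$ I would verify directly: writing $w = p \vee (\lnot p \wedge q) \leq q$, any nonzero $v = q \wedge \lnot w$ would be a lower bound of $q$ and $\lnot p$, whence $v \leq \lnot p \wedge q \leq w$, while also $v \leq \lnot w$, forcing $v \leq w \wedge \lnot w = 0$; thus $w = q$.

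The crux, on which both completeness and the recovery of the commeasurability relation rest, is that piecewise joins coincide with induced-order suprema (``local equals global''). I would prove that for any pairwise commeasurable family $X$ the piecewise join $\bigvee_B X$ is the $\leq$-least upper bound of $X$. It is evidently an upper bound; for the converse, given an upper bound $t$, I would refine $X$ inside a containing complete Boolean subalgebra to an orthogonal family $\{y_\alpha\}$ with the same piecewise join and $y_\alpha \leq t$, then observe that $\{y_\alpha\} \cup \{\lnot t\}$ is pairwise orthogonal, hence pairwise commeasurable, and so lies in a complete Boolean subalgebra $T'$; there $\bigvee_{T'} y_\alpha$ is orthogonal to $\lnot t$ and therefore below $t$, and by well-definedness of the partial operation $\bigvee$ this element is exactly $\bigvee_B X$. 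With this lemma, completeness follows from Lemma~\ref{lem:orthocomplete}, and the reconstructed $P$ is a complete orthomodular lattice. Finally I would check $\Phi(P) \cong B$: the partial joins agree by the lemma, and the commeasurability relations agree because a common Boolean subalgebra of $B$ remains one in $P$ (its operations are computed by piecewise joins), while conversely any $\commeas$-related pair in $P$ decomposes through pairwise orthogonal elements that are commeasurable in $B$ and hence lie together in a complete Boolean subalgebra of $B$.
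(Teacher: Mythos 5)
Your overall architecture is sound and, except for one step, closely parallels the paper's own proof: the equivalence of (b) and (c) via Theorem~\ref{thm:piecewiseequivalence}, the observation that the functor of Lemma~\ref{lem:piecewiseorthomodular} lands in the transitive joined subcategory, the use of Lemma~\ref{lem:orthocomplete} to show that morphisms in (b) are $\cOML$-morphisms, and your ``local equals global'' comparison of piecewise joins with induced-order suprema (which the paper states as ``straightforward to verify'') are all correct. The difference is that where the paper invokes Kalmbach's bundle lemma~\cite[1.4.22]{kalmbach:orthomodularlattices} to get orthomodularity of the induced order, you attempt a direct verification --- and that is exactly where your argument has a genuine gap.

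In your orthomodularity proof you set $w = p \vee (\lnot p \wedge q) \leq q$, correctly deduce $q \wedge \lnot w = 0$, and then conclude ``thus $w = q$.'' That last inference is invalid in a general ortholattice: the implication ``$a \leq b$ and $b \wedge \lnot a = 0$ imply $a = b$'' is \emph{equivalent} to the orthomodular law (it fails, for instance, in the hexagon ortholattice $O_6$). So, as written, your verification of orthomodularity silently assumes an equivalent form of orthomodularity --- it is circular precisely at the point where the piecewise Boolean structure must enter. The step can be repaired with tools you already have: $w \leq q$ in the induced order forces $w \commeas q$ (comparable elements are by definition comparable inside some commeasurable subalgebra), so $w$, $\lnot w$, and $q$ lie in a common complete Boolean subalgebra $T$; by your local-equals-global lemma the induced meet $q \wedge \lnot w$ agrees with the meet computed in $T$; and distributivity of the Boolean algebra $T$ then yields $q = (q \wedge w) \vee (q \wedge \lnot w) = w \vee 0 = w$. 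Note that this repair requires the local-equals-global lemma (or at least its binary case) to be proved \emph{before} the orthomodularity step, whereas you placed it after; with that reordering and the distributivity argument inserted, your direct route works and gives a self-contained alternative to the citation of Kalmbach's bundle lemma.
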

\begin{proof}
  The piecewise complete Boolean algebras that are in the image of the
  functor $\cOML \to \PCBoolean$ from
  Lemma~\ref{lem:piecewiseorthomodular} are by definition transitive and joined.  
  Next, we define a functor $G$ in the opposite direction. Let $B$ be a transitive,
  joined piecewise complete Boolean algebra and $\leq$ its induced partial
  order. By construction of $\leq$, it restricts to the given partial order on
  each commeasurable subalgebra of
  $B$. Furthermore, it is straightforward to verify that if $X
  \subseteq B$ is commeasurable then $\bigvee X$ is the least upper
  bound of $X$ with respect to $\leq$. Kalmbach's bundle
  lemma~\cite[1.4.22]{kalmbach:orthomodularlattices} now applies to 
  show that $\leq$ and $\neg$ induce the structure of an orthomodular
  lattice on $B$. Because orthogonal subsets are commeasurable, and
  $B$ has suprema of such subsets, it in fact has suprema of
  arbitrary subsets by Lemma~\ref{lem:orthocomplete}. 
  This makes $B$ into a complete orthomodular lattice, and we can define
  $G(B)=(B,\leq)$. Setting $G(f)=f$ on for $\PCBoolean$ morphisms
  that preserve binary joins gives a well-defined
  functor, thanks to Lemma~\ref{lem:orthocomplete}.
  It is straightforward to see that these two functors form an isomorphism
  of categories.

  The equivalence of~(b) and~(c) follows from
  Theorem~\ref{thm:piecewiseequivalence}. 
\end{proof}

\begin{remark}
  For an AW*-algebra $A$, recall that $\cC(A)$ is the set of commutative
  AW*-subalgebras, ordered by inclusion. It carries the same
  information as the projection lattice 
  $\Proj(A)$~\cite[Theorem~2.5]{heunen:cstarsubalgebras}. Therefore, everything
  that follows can equivalently be expressed in terms of $\cC(A)$
  instead of $\Proj(A)$. 
\end{remark}

\subsection*{The symmetry group}

If $A$ is a piecewise AW*- algebra, we let $U(A)$ denote the set of
unitary elements of $A$, \ie\ the set of all elements $u \in A$ such
that $u u^* = 1$ (recall that $u \commeas u^*$ for all $u \in A$).
This set carries the structure of a \emph{piecewise group},
\ie\ one can multiply commeasurable elements, the multiplication has a
unit (that is commeasurable with any element), and there is a total
function giving inverses, such that every commeasurable subset
generates a commutative subgroup.
A \emph{piecewise subgroup} is a subset that is a piecewise group in
its own right under the inherited operations (and commeasurability relation).
Every group is a piecewise
group, and conversely, we will be extending the structure of the
piecewise group $U(A)$ to that of a group. Piecewise groups form a
category $\PGroup$ with the evident morphisms.

\begin{definition}
  A \emph{symmetry} in an AW*-algebra $A$ is a self-adjoint unitary
  element; these are precisely the elements of the form $p^\perp - p =
  1-2p$ for some $p \in \Proj(A)$.  Let $U(A)$ denote the group of unitary
  elements of $A$, and define $\Sym(A)$ to be the subgroup of $U(A)$
  generated by the symmetries of $A$. (Notice that if $A$ is not
  commutative then $\Sym(A)$ contains elements that are not
  symmetries.)
\end{definition}

Before moving on to actions of groups on lattices, we consider 
how large the symmetry $\Sym(A)$ group can become. We will see that this
depends on the type: $\Sym(A)$ is (significantly) smaller than $U(A)$ for type
$\mathrm{I}_n$ algebras, and just as large as $U(A)$ for other AW*-algebras.

If $A$ is an AW*-algebra of type $\mathrm{I}_1$, i.e.\ if $A$ is
commutative, then $\Sym(A)$ is as small as possible, namely in bijection
with $\Proj(A)$, as the following example shows.

\begin{example}\label{ex:sym:comm}
  If $A$ is a commutative AW*-algebra,  then the product of symmetries
  is again a symmetry, and so the sets $\Sym(A)$ and $\Proj(A)$ are
  bijective. In fact, $(1-2p)(1-2q) = 1-2((p+q-pq)-pq) = 1-2((p \vee
  q)-(p \wedge q)) = 1-2(p \Delta q)$, where $\Delta$ is the symmetric  
  difference operation. Thus $\Sym(A)$ is the additive group of the
  Boolean ring structure associated to the Boolean algebra $\Proj(A)$.
\end{example}

For AW*-algebras of type $\mathrm{I}_n$ for $n \geq
2$, we will use the fact that traces and determinants are
well-defined for matrices over commutative rings.
Recall that any AW*-algebra of type $\mathrm{I}_n$ takes the form
$\M_n(C)$ for a commutative AW*-algebra
$C$~\cite[Proposition~18.2]{berberian}.
We will use roman letters $a,b,p,\ldots$ for elements of a matrix
algebra $\M_n(B)$ and greek letters $\alpha,\beta,\pi,\ldots$ for
elements of $B$ when both are needed.

\begin{lemma}\label{lem:sym:typeone}
  Let $A=\M_n(C)$ for $n\geq 2$ and a commutative AW*-algebra $C$.
  \begin{enumerate}[\quad (a)]
  \item If $b,c \in C$ satisfy $0 \leq c =b^2 \leq 1$ and $b^*=b$,
    then there exists $u \in \Sym(C)$ with $b=uc_0$,  where $c_0$ is the unique
    positive square root of $c$ in $C$. 
  \item If $u \in U(A)$ has $\det(u)=1$, then $u=(1-2p)(1-2q)$ for some $p,q \in \Proj(A)$.
  \item If $u \in U(A)$ has $\det(u) =1-2\pi$ for $\pi \in \Proj(C)$,
    then $u$ can be written as $u=(1-2p)(1-2q)(1-2r)$ for some $p,q,r \in \Proj(A)$. 
  \item $\Sym(A)$ is the normal subgroup $\{ u \in U(A) \mid \det(u)^2 =
    1 \} = \det^{-1}(\Sym(C))$.
  \end{enumerate}
\end{lemma}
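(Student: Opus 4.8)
The plan is to prove the four parts in order, as each builds on the previous ones, culminating in the structural characterization of $\Sym(A)$ in part (d).

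\medskip

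\textbf{Part (a).} I would work in the commutative AW*-algebra $C$, where by Gelfand duality (the equivalence $\cat{cAWstar} \simeq \cat{Stonean}\op$) elements are continuous functions on a Stonean space $X$. Given $b$ self-adjoint with $b^2 = c$ and $c_0$ the positive square root, the idea is that $b$ and $c_0$ agree up to sign on the spectrum: where $c_0 \neq 0$ we have $b/c_0 = \pm 1$, and the locus where $b \geq 0$ is a clopen set because $X$ is Stonean (closures of opens are clopen). Define $u$ to be $+1$ on the clopen set where $b \geq 0$ and $-1$ on its complement; then $u$ is a symmetry (self-adjoint unitary) with $b = u c_0$. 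The key point is that extremal disconnectedness is exactly what makes the sign function continuous, so $u \in \Sym(C) = \Proj(C)$-indexed symmetries by Example~\ref{ex:sym:comm}.

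\medskip

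\textbf{Part (b).} Here I would use the hypothesis $\det(u) = 1$ to realize $u$ as a product of two symmetries. A symmetry $1-2p$ has determinant $\pm 1$, so the product of two symmetries has determinant $1$; the content is the converse. The natural approach is a normal-form argument: reduce $u$ with $\det(u) = 1$ to a product of ``reflections.'' Over a field this is the classical fact that $SL_n$ (or the relevant unitary group) is generated by products of two involutions, or that every determinant-one unitary is a product of two symmetries. I would either adapt such a generation result to the matrix algebra $\M_n(C)$ over the commutative AW*-algebra $C$, using part (a) to extract square roots and signs as needed for the diagonalization/reduction steps, or exploit the Stonean structure of $C$ to patch together local factorizations into global ones. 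This is where part (a) earns its keep: the delicate step of choosing consistent signs across the spectrum is handled by the clopen-set argument.

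\medskip

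\textbf{Part (c) and part (d).} Part (c) follows from (b) by absorbing the remaining determinant: if $\det(u) = 1-2\pi$ is a symmetry in $C$, I would multiply $u$ by a single symmetry $s = 1-2r$ in $A$ whose determinant is also $1-2\pi$ (constructed from $\pi$ via a suitable diagonal projection), so that $us$ has determinant $1$ and hence factors as two symmetries by (b); then $u = (us)s^{-1}$ is a product of three. For part (d), I would show both inclusions. Any symmetry has $\det(1-2p)^2 = 1$, and $\det$ is multiplicative, so $\Sym(A) \subseteq \{u \mid \det(u)^2 = 1\}$; since $\det(u)^2 = 1$ means $\det(u)$ is a symmetry in $C$, i.e. $\det(u) \in \Sym(C)$ by Example~\ref{ex:sym:comm}, this set equals $\det^{-1}(\Sym(C))$. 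The reverse inclusion is exactly parts (b) and (c): if $\det(u)^2 = 1$ then $\det(u) = 1-2\pi$ for some $\pi \in \Proj(C)$, whence $u \in \Sym(A)$ by (c). Normality is then immediate because $\det^{-1}$ of a subgroup is normal (as $\det$ is a homomorphism to the abelian group $U(C)$).

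\medskip

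I expect the main obstacle to be part (b): proving that a determinant-one unitary over the commutative AW*-algebra $C$ factors as a product of exactly two symmetries. The subtlety is global consistency of the factorization across the Stonean spectrum of $C$ — a pointwise factorization over each stalk is classical, but assembling these into honest elements of $\M_n(C)$ requires that the choices vary continuously, which is precisely why part (a)'s clopen sign-selection is isolated first. Getting the reduction to respect the $*$-structure (so the factors are genuinely \emph{self-adjoint} unitaries, not merely involutions) is the technical heart of the argument.
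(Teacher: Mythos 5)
Your parts (a), (c), and (d) essentially reproduce the paper's arguments: (a) is the clopen sign-selection on the Stonean spectrum (though note one imprecision: the set $\{x \mid b(x) \geq 0\}$ is closed but generally \emph{not} clopen; the paper instead takes the complementary clopen pair $\inter(b^{-1}(-\infty,0])$ and $\cl(b^{-1}(0,\infty))$, which works because $u$ only needs to match the sign of $b$ off the zero set of $c_0$); (c) is exactly the paper's absorption trick with $r = \diag(\pi,0)$; and (d) is the same two-inclusion argument with normality from multiplicativity of $\det$ into the abelian group $U(C)$.

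The genuine gap is part (b), which is the technical heart of the lemma, and your proposal does not prove it --- it only names two candidate strategies without executing either. The first strategy (``adapt a classical generation result'') leaves unaddressed precisely the two steps that carry all the content: one needs the Deckard--Pearcy diagonalization theorem to reduce a determinant-one unitary over $C$ to $\diag(\zeta_1,\dots,\zeta_n)$ with $\prod \zeta_i = 1$, and then, after reducing to the $2 \times 2$ block $\diag(\zeta,\zeta^*)$, one needs an actual factorization into two symmetries. The paper does this by a global construction that never argues pointwise: it exhibits the explicit family of projections
\[
  p_\varphi = \frac{1}{1+\varphi^2}\begin{pmatrix} 1 & \varphi \\ \varphi & \varphi^2 \end{pmatrix},
  \qquad \varphi \in C \text{ positive},
\]
so that $v_\varphi = (1-2p_\varphi)(1-2p_0)$ is a product of two symmetries with $\det(v_\varphi)=1$ and $\tr(v_\varphi) = 2\,\frac{1-\varphi^2}{1+\varphi^2}$ sweeping out $\{\gamma \in C \mid -2 < \gamma \leq 2\}$; matching traces realizes $\diag(\zeta_0,\zeta_0^*)$ (with $\zeta_0 = \alpha + i\beta_0$, $\beta_0 = \sqrt{1-\alpha^2}$) as unitarily equivalent to some $v_\varphi$, and only then does part (a) enter, supplying $\sigma \in \Sym(C)$ with $\beta = \sigma \beta_0$ so that the $*$-ring automorphism $\theta$ fixing self-adjoints and sending $i \mapsto i\sigma$ carries this factorization to one of $\diag(\zeta,\zeta^*)$. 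Your second strategy --- patching pointwise factorizations over the Stonean spectrum --- runs into exactly the obstruction you yourself flag and do not resolve: the factorization of a given unitary into two symmetries is highly non-unique and the choice set at each point is not discrete, so clopen patching gives no continuous selection; part (a) alone cannot substitute for the trace-matching construction above. As it stands, the proposal establishes (a), (c), (d) conditionally on (b), but (b) remains unproven.
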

\begin{proof}
  For part (a), observe that the Gelfand spectrum $X$ of $C$ is
  extremally disconnected. So $\inter(b^{-1}(-\infty,0])$ is a clopen
  set, as is its complement $\cl(b^{-1}(0,\infty)))$.
  So the function $u \colon X \to C$ defined by
  \[
  u(x) =
  \begin{cases}
    -1 & \mbox{if $x \in \inter(b^{-1}(-\infty,0])$,} \\
    \phantom{-}1 & \mbox{if $x \in \cl(b^{-1}(0,\infty))$,}
  \end{cases}
  \]
  is continuous. It is clearly a self-adjoint unitary.
  If $x \in \inter(b^{-1}(-\infty,0])$, then $b(x) \leq 0$ and $u(x)=-1$, so
  $b(x)=u(x)c_0(x)$.
  If $x \in \cl(b^{-1}(0,\infty))$, then $b(x) \geq 0$ and $u(x)=1$, so
  $b(x)=u(x)c_0(x)$.
  In either case $b=uc_0$.

  For part (b) we generalize the argument
  of~\cite[page~87]{dye:projections} from matrices with entries in $\C$ to entries in $C$. 
  Let $u \in U(A)$ have determinant 1. Then $u$ is unitarily equivalent to a
  diagonal matrix $\diag(\zeta_1,\ldots,\zeta_n)$ with diagonal
  entries $\zeta_i \in
  U(C)$ satisfying $\prod \zeta_i=1$ \cite{deckardpearcy:diagonal}. Such
  a matrix can be written as $\prod_{i=1}^{n-1}
  \diag(\zeta_{1,i},\ldots,\zeta_{n,i})$, where $\zeta_{i,i} =
  \prod_{k=1}^i \zeta_k$, $\zeta_{i+1,i}=\zeta_{i,i}^*$, and
  $\zeta_{k,i}=1$ otherwise. Therefore, we may assume that
  $u=\diag(\zeta,\zeta^*,1,\ldots,1)$ for fixed $\zeta \in U(C)$.
  Keeping the rest of the matrices involved equal to the identity matrix, we
  may in fact pretend that we are dealing with $n=2$ and
  $u=\diag(\zeta,\zeta^*)$ for fixed $\zeta \in U(C)$.
  We may write $\zeta = \alpha + i\beta$ where
  $\alpha,\beta \in C$ are self-adjoint and satisfy $\alpha^2 + \beta^2 = 1$.

  For each positive $\varphi \in C$, the element $1+\varphi^2$ is invertible in
  $C$, so we can define
  \[
  p_\varphi = \frac{1}{1+\varphi^2} \begin{pmatrix} 1 & \varphi \\ \varphi &
    \varphi^2 \end{pmatrix}.
  \]
  Each $p_\varphi$ is easily seen to be a projection in $A$, so
  $v_\varphi=(1-2p_\varphi)(1-2p_0)$ defines an element of
  $\Sym(A)$. Computing
  \[
    v_\varphi = \frac{1}{1+\varphi^2} \begin{pmatrix} 1-\varphi^2 &
      -2\varphi \\ 2\varphi & 1-\varphi^2 \end{pmatrix}
  \]
  shows that $\det(v_\varphi)=1$ and $\tr(v_\varphi) = 2 \cdot
  \frac{1-\varphi^2}{1+\varphi^2}$. Now, the function $\varphi \mapsto
  \frac{1-\varphi^2}{1+\varphi^2}$ is a composite of an
  order-automorphism $\varphi \mapsto \varphi^2$ of the positive cone
  of $C$ with the Cayley transform $\varphi \mapsto
  \frac{1-\varphi}{1+\varphi}$, which maps  
  the positive cone of $C$ order-anti-isomorphically onto the interval
  $\{ \gamma \in C \mid -1 < \gamma \leq 1 \}$. 
  Hence $\tr(v_\varphi)$ assumes all values in the interval
  $\{ \gamma \in C \mid -2 < \gamma \leq 2 \}$ as $\varphi$ ranges
  over the positive cone of $C$, and actually  
  achieves the value $-2$ by interpreting $p_\infty = \left(\begin{smallmatrix}
  0 & 0 \\ 0 & 1 \end{smallmatrix}\right)$. Diagonalizing $v_\varphi$ to $\diag(\xi,\xi^*)$ 
  with $\xi \in U(C)$, we can therefore make $\tr(v_\varphi) =
  \xi + \xi^* = 2 \Re(\xi)$ assume all values in the positive cone of
  $C$ by varying $\varphi$.

  In particular, for $\zeta = \alpha + i \beta$ as above, there exist positive $\varphi
  \in C$ and $\beta_0 = \sqrt{1 - \alpha^2}$ such that $\zeta_0 = \alpha + i\beta_0 \in
  U(C)$ and $\diag(\zeta_0, \zeta_0^*)$ 
  is unitarily equivalent to $v_\varphi$. Part (a) gives
  $\sigma \in \Sym(C)$ with $\beta = \sigma\beta_0$. The $\R$-linear map
  $\theta$ fixing self-adjoint elements and sending $i$ to $i\sigma$
  defines a $*$-ring automorphism of $C$. 
  Thus $\M_n(\theta)$ is a $*$-ring automorphism of $A$, and 
  $\M_n(\theta)(v_{\varphi})$ is unitarily equivalent to
  $\M_n(\theta)(\diag(\zeta_0, \zeta_0^*)) = \diag(\theta(\zeta_0),
  \theta(\zeta_0)^*) = \diag(\zeta, \zeta^*)$. 
  Because $v_{\varphi}$ is a product of two
  symmetries, the same is true for $\diag(\zeta, \zeta^*)$.


  For part (c), suppose $\det(u)=1-2\pi$. Set $r = \diag(\pi,0) \in
  \Proj(A)$. Notice that $1-2r = \diag(1-2\pi, 1)$ has determinant
  $1-2\pi$. Then $u \cdot (1-2r)$ has determinant~1, so by part~(b) there
  exist $p, q \in \Proj(A)$ such that $u(1-2r) =
  (1-2p)(1-2q)$. Multiplying on the right by $1-2r$, which is its own
  inverse, gives the desired representation of $u$. 

  Finally, part (d) follows from the observation 
  $\Sym(C) = \{ 1-2\pi \mid \pi \in \Proj(C) \}$ and part (c), as follows. Because
  its generators $1-2p$ square to the identity, and the determinant is
  multiplicative, $\Sym(A) \subseteq \{ u \in U(A) \mid \det(u)^2=1
  \}$. Next, if $\det(u)^2=1$, then $\det(u)$ is a symmetry
  in $C$, and hence of the form $1-2\pi$ for some $\pi \in
  \Proj(C)$, so that $\{u \in U(A) \mid \det(u)^2=1 \} \subseteq
  \det^{-1}(\Sym(C))$. Finally, part (c) implies 
  $\det^{-1}(\Sym(C)) \subseteq \Sym(A)$.
\end{proof}

For AW*-algebras of infinite type, it is known that
every unitary is a product of four
symmetries~\cite{thakarebaliga:symmetries}, and therefore the symmetry
group is the full unitary group.

That leaves AW*-algebras of type $\mathrm{II}_1$. For W*-factors of
this type, it is known that $\Sym(A)=U(A)$~\cite{broise:unitaries}.  
If $\Sym(A)$ is closed in $U(A)$, it follows from
from~\cite[Theorem~2]{kadison:unitary}, which holds for AW*-algebras,
that $\Sym(A)=U(A)$. The general question of whether $\Sym(A) = U(A)$
for AW*-algebras $A$ of type $\mathrm{II}_1$  remains open.

\subsection*{Active lattices}

The final piece of structure we will need to be able to recover the
full algebra structure of an AW*-algebra is an action of the
symmetry group.

\begin{definition}
  An \emph{action} of a group $G$ on a piecewise AW*-algebra $A$ is a
  group homomorphism from $G$ to the group of isomorphisms $A \to A$
  in $\PAWstar$.
  Similarly, an action of a group $G$ on a complete orthomodular lattice $P$ is a
  group homomorphism from $G$ to the group of isomorphisms $P \to P$
  in $\cOML$. Explicitly, we can consider a function $G \times P
  \stackrel{\cdot}{\to} P$ satisfying: 
  \begin{itemize}
  \item $1 \cdot p = p$ for all $p \in P$;
  \item $u \cdot (v \cdot p) = (uv) \cdot p$ for all $p \in P$ and $u,v \in G$;
  \item $u \cdot (-) \colon P \to P$ is a morphism of $\cOML$ for each $u \in G$.
  \end{itemize}
  Alternatively, we can speak about a group homomorphism $\alpha \colon G \to \Aut(P)$.
  If the object being acted upon needs to be emphasized, we will speak
  of a \emph{piecewise algebra action} or an \emph{orthomodular action}, respectively.
\end{definition}

If $A$ is an AW*-algebra, then its unitary group $U(A)$ acts on its
projection lattice $\Proj(A)$ by (left) conjugation: if $p$ is a projection
and $u$ is a unitary, then $upu^*$ is again a projection. Moreover,
because conjugation with a unitary is an automorphism of AW*-algebras,
$u(-)u^* \colon \Proj(A) \to \Proj(A)$ is a morphism of complete
orthomodular lattices for each $u \in U(A)$.  
The group $\Sym(A)$ acts on $\Proj(A)$ by restricting the action of $U(A)$. 
This motivates the following definition.

\begin{definition}
  The category $\EAWstar$ of \emph{extended piecewise AW*-algebras} is
  defined as follows. Objects are 4-tuples $(A,P,G,\cdot)$ consisting of:
  \begin{itemize}
  \item a piecewise AW*-algebra $A$;
  \item an object $P$ of $\cOML$ that maps to $\Proj(A)$ under the
    forgetful functor $\cOML \to \PCBoolean$;
  \item a group $G$, that maps to a piecewise subgroup of $U(A)$ under the forgetful functor
    $\Group \to \PGroup$, and that (contains and) is
    generated as a group by the elements $1-2p$ for all $p \in \Proj(A)$;
  \item an action of $G$ on $A$, which restricts to (left) conjugation on $G
    \subseteq A$, that is, $g \cdot h = ghg^{-1}$ for $g \in G$ and $h \in
    G \subseteq A$.
  \end{itemize}
  A morphism $f \colon (A,P,G,\cdot) \to (A',P',G',\cdot')$ is a function $f \colon A \to A'$
  such that:
  \begin{itemize}
  \item $f$ is a morphism of piecewise AW*-algebras;
  \item $f$ restricts to a morphism $P \to P'$ of complete orthomodular lattices;
  \item $f$ restricts to a group homomorphism $G \to G'$;
  \item the equivariance condition $f(u \cdot
    a) = f(u) \cdot' f(a)$ holds for $u \in G$ and $a \in A$.
  \end{itemize}
\end{definition}

In fact, using the equivalence $F \colon \PCBoolean \to \PAWstar$ of
Theorem~\ref{thm:piecewiseequivalence}, we can
whittle the data down further. In particular, if a group $G$ has an
orthomodular action on $P$, there is an induced piecewise algebra
action on $F(P)$ as follows (applying Lemma~\ref{lem:piecewiseorthomodular}
and Theorem~\ref{thm:piecewiseequivalence}):
\[
  G \to \Aut_{\cOML}(P) \subseteq
    \Aut_{\PCBoolean}(P) \cong \Aut_{\PAWstar}(F(P)).
\]
Hence we can reformulate purely in terms of orthomodular
lattices and groups.

\begin{definition}\label{def:activelattive}
  An \emph{active lattice} is a 3-tuple $(P,G,\cdot)$ consisting of:
  \begin{itemize}
  \item a complete orthomodular lattice $P$;
  \item a group $G$, that maps to a piecewise subgroup of $U(F(P))$
    under the forgetful functor $\Group \to \PGroup$, and
    that (contains and) is generated as a group by the elements $1-2p$
    for all $p \in \Proj(F(P)) \cong P$;
  \item an orthomodular action of $G$ on $P$ such that the induced
    piecewise algebra action of $G$ on $F(P)$ restricts to (left)
    conjugation on $G \subseteq F(P)$. 
  \end{itemize}
  A \emph{morphism of active lattices} $(P,G,\cdot) \to
  (P',G',\cdot')$ is a morphism $f \colon P \to P'$ of complete
  orthomodular lattices such that: 
  \begin{itemize}
  \item $Ff$ restricts to a group homomorphism $G \to G'$;
  \item equivariance $f(u \cdot
    p) = Ff(u) \cdot' f(p)$ holds for all $u \in G$ and $p \in P$.
  \end{itemize}
  Active lattices and their morphisms form a category $\AL$.
\end{definition}

\begin{proposition}\label{prop:activelatticesequivalence}
  The categories $\EAWstar$ and $\AL$ are equivalent.
\end{proposition}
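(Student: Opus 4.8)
The plan is to show that the evident forgetful functor $\Phi \colon \EAWstar \to \AL$, which sends $(A,P,G,\cdot)$ to $(P,G,\cdot_P)$ where $\cdot_P$ is the restriction of the piecewise algebra action to $P \cong \Proj(A)$, is an equivalence, with quasi-inverse $\Psi$ that \emph{reconstructs} the algebra as $\Psi(P,G,\cdot) = (F(P),P,G,\tilde\cdot)$. The guiding principle, which makes the definition of active lattice sensible in the first place, is that the object $A$ and the action of $G$ on $A$ are entirely redundant given $P$ and the orthomodular action: since $A$ is a piecewise AW*-algebra whose projections form the piecewise complete Boolean algebra underlying $P$, Theorem~\ref{thm:piecewiseequivalence} gives $A \cong F(\Proj(A)) = F(P)$, and more importantly the equivalence $\Proj \colon \PAWstar \to \PCBoolean$ is fully faithful, so that
\[
  \Aut_{\PAWstar}(A) \cong \Aut_{\PCBoolean}(\Proj(A)) = \Aut_{\cOML}(P),
\]
the last equality holding because the orthomodular structure is a property of the underlying piecewise complete Boolean algebra (as established by the Proposition preceding this one, any $\PCBoolean$-isomorphism of orthomodular lattices preserves the induced order and orthocomplement). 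Thus the action $\cdot$ of $G$ on $A$ and the orthomodular action $\cdot_P$ of $G$ on $P$ carry exactly the same information.

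First I would make $\Psi$ precise on objects. Given an active lattice $(P,G,\cdot)$, set $A = F(P)$, equip it with its projection lattice $\Proj(F(P)) \cong P$, keep $G$ with its given embedding into $U(F(P))$ and its generation by the symmetries $1-2p$, and define the piecewise algebra action $\tilde\cdot$ as the induced action from the displayed chain
\[
  G \to \Aut_{\cOML}(P) \cong \Aut_{\PAWstar}(F(P)),
\]
exactly the map appearing in Definition~\ref{def:activelattive}. All four conditions defining an object of $\EAWstar$ then hold: the conditions on $G$ are part of the active lattice data, and the requirement that $\tilde\cdot$ restrict to conjugation on $G \subseteq F(P)$ is precisely the defining condition of an active lattice. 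Conversely, for $\Phi$ one checks that $G$ maps into $U(F(P))$ under the identification $A \cong F(P)$, and that restricting $\cdot$ to $P = \Proj(A)$ yields an orthomodular action whose induced action on $F(P)$ restricts to conjugation; the latter follows because, under the isomorphism $\Aut_{\PAWstar}(A) \cong \Aut_{\cOML}(P)$, the induced action simply transports $\cdot$.

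Next I would treat morphisms, where the key technical fact is that a morphism of piecewise AW*-algebras is determined by its restriction to projections, again by faithfulness of $\Proj$. On morphisms $\Phi$ restricts $f \colon A \to A'$ to $f|_P \colon P \to P'$, which is a $\cOML$-morphism satisfying the group-homomorphism and equivariance conditions of $\AL$; and $\Psi$ sends an $\AL$-morphism $f \colon P \to P'$ to $Ff \colon F(P) \to F(P')$. The only nonroutine point is that the equivariance condition $f(u \cdot p) = Ff(u) \cdot' f(p)$, which is stated only on projections, extends to the full equivariance $Ff(u \cdot a) = Ff(u) \cdot' Ff(a)$ for all $a \in F(P)$: both $Ff \circ (u\cdot -)$ and $(Ff(u) \cdot' -)\circ Ff$ are morphisms of piecewise AW*-algebras $F(P) \to F(P')$ that agree on $\Proj(F(P)) = P$, hence agree everywhere. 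This yields the bijection of hom-sets
\[
  \Hom_{\EAWstar}((A,P,G,\cdot),(A',P',G',\cdot')) \cong \Hom_{\AL}(\Phi(A,\ldots),\Phi(A',\ldots)),
\]
with $\Phi$ and $\Psi$ mutually inverse on arrows because $\Proj$ and $F$ are.

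Finally I would verify that $\Phi$ and $\Psi$ are quasi-inverse. The composite $\Phi\Psi$ is the identity on $\AL$, since restricting the induced action $\tilde\cdot$ back to $P = \Proj(F(P))$ recovers $\cdot$. For $\Psi\Phi \cong \mathrm{id}_{\EAWstar}$ I would use the natural isomorphism $\epsilon_A \colon F(\Proj(A)) \to A$ of Theorem~\ref{thm:piecewiseequivalence}; it is a piecewise AW*-isomorphism restricting to the identity on $P$, carries the symmetries $1-2p$ to themselves and hence $G$ to $G$, and intertwines the reconstructed action $\widetilde{\cdot_P}$ with the original $\cdot$ precisely because both arise from the same action on projections under $\Aut_{\PAWstar}(A) \cong \Aut_{\cOML}(P)$; naturality is inherited from that of $\epsilon$. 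The main obstacle I anticipate is exactly this compatibility of actions: confirming that the induced piecewise algebra action and the original one match under $\epsilon_A$, which is where the redundancy principle and the fully faithful comparison of automorphism groups do the essential work, the remaining verifications being bookkeeping.
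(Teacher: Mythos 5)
Your proposal is correct and takes essentially the same approach as the paper: both directions transport the group and the action across the equivalence of Theorem~\ref{thm:piecewiseequivalence}, with the counit $\varepsilon_A$ and unit $\eta_P$ furnishing the natural isomorphisms $\Psi\Phi \cong \mathrm{id}_{\EAWstar}$ and $\Phi\Psi \cong \mathrm{id}_{\AL}$ (the paper writes these transports explicitly as $U(\varepsilon_A^{-1})(G)$, $\alpha \circ U(\varepsilon_A)$ and $\Aut(\eta_P^{-1}) \circ \alpha$ where you work up to identification, so strictly $\Phi\Psi$ is naturally isomorphic to the identity rather than equal to it). Your explicit argument that equivariance on projections extends to all of $F(P)$ — two piecewise AW*-morphisms agreeing on projections agree everywhere — is a welcome elaboration of what the paper dismisses with ``follows directly.''
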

\begin{proof}
  We use the unit $\eta_P \colon P \to \Proj(F(P))$ and counit
  $\varepsilon_A \colon F(\Proj(A)) \to A$ isomorphisms of the
  equivalence of Theorem~\ref{thm:piecewiseequivalence} 
  to define appropriate functors.

  Define $G \colon \EAWstar \to \AL$ by
  $G(A,P,G,\alpha)=(P,U(\varepsilon_A^{-1})(G), \alpha \circ
  U(\varepsilon_A))$ and $G(f)=f$. This is well-defined: if $G$ is a
  piecewise subgroup of $U(A)$, then $U(\varepsilon_A^{-1})(G)$ is a
  piecewise subgroup of $U(F(P))$, and precomposing the action
  $\alpha \colon G \to \Aut(P)$ with $U(\varepsilon_A)$ turns it into
  an action of $U(\varepsilon_A^{-1}(G))$ on $P$. The equivariance
  condition on morphisms also follows directly.

  In the reverse direction, define $H \colon \AL \to \EAWstar$ on
  objects by setting 
  \[
      H(P,G,\alpha) = (F(P), \eta_P(P), G, \Aut(\eta_P^{-1}) \circ \alpha)
  \]
  and on morphisms by $H(f)=F(f)$. This is
  well-defined: the structure of $P$ as a 
  complete orthomodular lattice transfers via $\eta_P$ to
  $\eta_P(P)=\Proj(F(P))$, and postcomposing the action $\alpha \colon
  G \to \Aut(P)$ with $\Aut(\eta_P^{-1})$ turns it into an action of $G$ on
  $\Proj(F(P))$. The equivariance condition on morphisms also follows
  directly. 

  Now $\eta_P$ implements a (natural) isomorphism $G \circ H
  (P,G,\cdot) \cong (P,G,\cdot)$, and $\varepsilon_A$ implements a
  (natural) isomorphism $H \circ G(A,P,G,\cdot) \cong
  (A,P,G,\cdot)$. Hence $G$ and $H$ form an equivalence.
\end{proof}

\subsection*{The functor}

We can now define a functor from AW*-algebras to active lattices, and
prove that it is faithful. In Section~\ref{sec:fullness} we will prove that
it is also full. The next proposition tacitly identifies a piecewise AW*-algebra $A$
with $F(\Proj(A))$, as justified by Theorem~\ref{thm:piecewiseequivalence}.

\begin{proposition}\label{prop:thefunctor}
  There is a functor $\ActiveProj \colon \AWstar \to \AL$ acting as
  \[
    \ActiveProj(A) = (\Proj(A), \Sym(A), \cdot),
  \]
  on objects, where $u \cdot p = upu^*$. It sends a morphism $A \to B$ to its restriction $\Proj(A) \to
  \Proj(B)$.  
\end{proposition}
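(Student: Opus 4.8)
The plan is to verify, in sequence, that the stated assignment lands in the category $\AL$ on objects, that it is well-defined on morphisms, and that it respects identities and composition. First I would check that the triple $(\Proj(A), \Sym(A), \cdot)$ is genuinely an active lattice in the sense of Definition~\ref{def:activelattive}. The lattice $\Proj(A)$ is a complete orthomodular lattice via the functor $\Proj \colon \AWstar \to \cOML$ noted after Lemma~\ref{lem:orthocomplete}. For the group component, $\Sym(A) \subseteq U(A)$ is by definition generated by the symmetries $1-2p$, so it automatically contains and is generated by these elements; one must note that under the identification $A \simeq F(\Proj(A))$ of Theorem~\ref{thm:piecewiseequivalence}, $\Sym(A)$ maps to a piecewise subgroup of $U(F(\Proj(A)))$, which is immediate since $\Sym(A)$ is a subgroup of the unitaries and commeasurability is inherited. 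The action $u \cdot p = upu^*$ is an orthomodular action because conjugation by a unitary is a $*$-automorphism of $A$ that preserves $\RP$ and suprema, hence restricts to an automorphism of $\Proj(A)$ in $\cOML$, as already observed in the discussion preceding the definition of $\EAWstar$. The remaining object-level condition—that the induced piecewise algebra action on $F(\Proj(A))$ restricts to conjugation on $\Sym(A)$ itself—holds because conjugation in $A$ restricts to conjugation on the subgroup $\Sym(A) \subseteq A$, and this transports across the equivalence.

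Next I would treat morphisms. Given a morphism $f \colon A \to B$ in $\AWstar$, Lemma~\ref{lem:awstarmorphisms} and~\cite[Proposition~5.7]{berberian} guarantee that $f$ restricts to a morphism $\Proj(f) \colon \Proj(A) \to \Proj(B)$ of complete orthomodular lattices, since $f$ preserves the orthocomplement $p \mapsto 1-p$, preserves binary joins, and preserves suprema of orthogonal families. I then need the two conditions of a morphism of active lattices. For the group condition, $f$ sends symmetries to symmetries, since $f(1-2p) = 1 - 2f(p)$, and $f$ is multiplicative, so it maps the generated subgroup $\Sym(A)$ into $\Sym(B)$; under the identification via $F$ and Theorem~\ref{thm:piecewiseequivalence} this says precisely that $Ff$ restricts to a group homomorphism $\Sym(A) \to \Sym(B)$. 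For equivariance, since $f$ is a $*$-homomorphism it satisfies $f(upu^*) = f(u)f(p)f(u)^*$ for $u \in \Sym(A)$ and $p \in \Proj(A)$, which is exactly the required condition $f(u \cdot p) = Ff(u) \cdot' f(p)$. Functoriality—preservation of identities and composites—is then routine, as the assignment on morphisms is just the restriction $\Proj(f)$ and $\Proj$ is already functorial.

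The main obstacle is bookkeeping around the identifications rather than any deep argument: the codomain $\AL$ is phrased entirely in terms of $F(P)$ via Theorem~\ref{thm:piecewiseequivalence}, whereas the natural computations happen inside the genuine algebra $A$. Thus each condition must be checked by transporting along the counit $\varepsilon_A \colon F(\Proj(A)) \to A$ and confirming compatibility with the equivalence $\EAWstar \simeq \AL$ of Proposition~\ref{prop:activelatticesequivalence}. In practice the cleanest route is to first build the functor into $\EAWstar$—sending $A$ to $(N(A), \Proj(A), \Sym(A), \mathrm{conj})$, which is manifestly an object of $\EAWstar$ with all structure living inside the actual algebra—verify the morphism conditions there where conjugation is literal, and only then compose with the equivalence $G \colon \EAWstar \to \AL$ of Proposition~\ref{prop:activelatticesequivalence} to obtain $\ActiveProj$. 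This factorization isolates all the genuine verification in $\EAWstar$, where no identifications are needed, and relegates the passage to $\AL$ to an already-established equivalence.
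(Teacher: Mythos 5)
Your proposal is correct and takes essentially the same approach as the paper, whose entire proof is ``Follows directly from the definitions''; your write-up simply makes those routine verifications explicit. The factorization through $\EAWstar$ that you suggest is also consistent with the paper, which immediately after the proposition uses Proposition~\ref{prop:activelatticesequivalence} to regard $\ActiveProj$ as a functor into $\EAWstar$ as well.
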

\begin{proof}
  Follows directly from the definitions.
\end{proof}

Via Proposition~\ref{prop:activelatticesequivalence}, we also
write $\ActiveProj$ for the functor $\AWstar \to \EAWstar$.

\begin{lemma}\label{lem:faithful}
  The functor $\ActiveProj$ is faithful.
\end{lemma}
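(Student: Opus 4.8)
The plan is to show that the functor $\ActiveProj$ reflects equality of morphisms. Since $\ActiveProj$ acts on morphisms by restriction to projection lattices, faithfulness amounts to the following concrete statement: if two morphisms $f, g \colon A \to B$ of AW*-algebras agree on all projections of $A$, then $f = g$. So I would take two AW*-morphisms $f, g$ with $\Proj(f) = \Proj(g)$, i.e. $f(p) = g(p)$ for every $p \in \Proj(A)$, and aim to conclude that they coincide as $*$-homomorphisms on all of $A$.

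**First I would** exploit the fact that an AW*-algebra is generated, in an appropriate sense, by its projections. The cleanest route is to reduce to the commutative case: for any single self-adjoint element $a \in A$, choose a maximal commutative AW*-subalgebra $C$ containing $a$ (using Definition~\ref{def:awstar}(d)). By the commutative theory, $C$ is the closed linear span of its projections $\Proj(C) \subseteq \Proj(A)$. Since $f$ and $g$ are $*$-homomorphisms (hence $\mathbb{C}$-linear and continuous, as $*$-homomorphisms between C*-algebras are automatically norm-contractive), and they agree on every projection of $C$, they must agree on the dense linear span of $\Proj(C)$, and therefore on all of $C$ by continuity. In particular $f(a) = g(a)$.

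**Then** since every element $a \in A$ decomposes as $a = \Re(a) + i\,\Im(a)$ with $\Re(a), \Im(a)$ self-adjoint, and $f, g$ are $\mathbb{C}$-linear and $*$-preserving, agreement on self-adjoint elements forces agreement everywhere. This yields $f = g$, establishing faithfulness. I would phrase the whole argument so that it applies verbatim to the functor landing in either $\AL$ or $\EAWstar$, since by Proposition~\ref{prop:activelatticesequivalence} faithfulness is preserved under the equivalence, and on morphisms $\ActiveProj$ is literally the restriction map $f \mapsto \Proj(f)$ in both guises.

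**The one point requiring care** — though it is not really an obstacle — is the automatic continuity used in the density step: one must invoke that a $*$-homomorphism of C*-algebras is norm-decreasing (\cite[Theorem~4.1.8]{kadisonringrose} as already cited in the excerpt) so that agreement on a norm-dense subset of $C$ genuinely propagates to all of $C$. Everything else is routine, so I expect the proof to be short.
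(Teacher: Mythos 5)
Your proof is correct and follows essentially the same route as the paper: two morphisms with the same image under $\ActiveProj$ agree on $\Proj(A)$, and since an AW*-algebra is the closed linear span of its projections, continuity and linearity force them to coincide. The paper states this in two sentences; you merely expand the closed-linear-span fact via Definition~\ref{def:awstar}(d) and the self-adjoint decomposition, which is exactly how that fact is justified.
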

\begin{proof}
  If $\ActiveProj(f)=\ActiveProj(f')$, the continuous linear
  functions $f,f' \colon A \to B$ coincide on $\Proj(A)$. But $A$
  is the closed linear span of $\Proj(A)$.
\end{proof}

The reader might think that Definition~\ref{def:activelattive} could
be reduced further still by considering just complete orthomodular
lattices acted upon by groups generated by them, and letting morphisms
be equivariant pairs of group homomorphisms and morphisms of complete
orthomodular lattices. The following example shows that one cannot
ignore piecewise algebra structure this easily and hope to have a full
and faithful functor out of $\AWstar$. 

\begin{example}
  Consider $\ActiveProj(\M_2(\C)) = (\Proj(\M_2(\C)), \Sym(\M_2(\C)),
  \cdot)$. Define a morphism of complete orthomodular lattices $f
  \colon \Proj(\M_2(\C)) \to \Proj(\M_2(\C))$ by $f(0)=0$, $f(1)=1$,
  and $f(p)=p^\perp$ for $p \neq 0,1$. Recall from
  Lemma~\ref{lem:sym:typeone} that $\Sym(\M_2(\C)) = \{ u \in U_2(\C)
  \mid \det(u)=\pm 1 \}$. Define a group homomorphism $g
  \colon \Sym(\M_2(\C)) \to \Sym(\M_2(\C))$ by $g(u) = \det(u) u$.
  Write $j$ for the injection $\Proj(\M_2(\C)) \to \Sym(\M_2(\C))$
  given by $j(p) = 1-2p$. For $p=0,1$ one easily checks that
  $j(f(p))=g(j(p))$, and for $p \neq 0,1$:
  \[
    j(f(p)) = j(p^\perp) = p - p^\perp = \det(p-p^\perp) \cdot
    (p-p^\perp) = g(p-p^\perp) = g(j(p)).
  \]
  Finally, for $u \in \Sym(\M_2(\C))$ and $p \neq 0,1$:
  \[
    g(u) f(p) g(u)^* = |\det(u)|^2 up^\perp u^* = 1-upu^* =
    f(upu^*),
  \]
  and for $p=0,1$ this formula is also easily seen to hold. Hence $f$
  and $g$ satisfy the equivariance condition. 

  But if there is a linear map $h \colon \M_2(\C) \to \M_2(\C)$ that
  restricts to $f$ on $\Proj(\M_2(\C))$ and to $g$ on $\Sym(\M_2(\C))$,
  then for $\zeta \in U(\C) \backslash \{ \pm 1 \}$, $p \in \Proj(\M_2(\C))
  \backslash\{0,1\}$, and $u=\zeta p+\zeta^*p^\perp \in
  \Sym(\M_2(\C))$, we would have
  \[
    u=g(u)=g(\zeta p+\zeta^*p^\perp)=\zeta
    f(p)+\zeta^*f(p^\perp) = \zeta p^\perp +
    \zeta^*p = u^*,
  \]
  contradicting $\zeta\neq\pm 1$. Therefore it cannot be the case that
  $h$ restricts to $f$. 
\end{example}

In the commutative case, the functor $\ActiveProj$ has nice properties.

\begin{example}
  There is a functor $\CBoolean \to \AL$, that maps a complete Boolean
  algebra $B$ to the active lattice $(B,B_{\text{add}},\cdot)$. Here,
  we identify $B$ with $\Proj(F(B))$ using
  Theorem~\ref{thm:piecewiseequivalence}, and
  $B_{\text{add}}$ is the additive group of $B$ qua Boolean ring,
  which acts trivially on the Boolean algebra $B$ itself. This functor
  is full and faithful. Moreover, it factors through the functor
  $\ActiveProj$. If we restrict to the full subcategory $\cAL$ of
  $\AL$ consisting of the objects $(P,G,\cdot)$ for which $P$ is a
  complete Boolean algebra, then it follows from
  Example~\ref{ex:sym:comm} that the functor $\ActiveProj$ 
  becomes an equivalence of categories. This makes the left triangle
  in the following diagram commute. The other faces obviously commute.
  \[\xymatrix@C-8ex@R+1ex{
    & \cAL \ar@{^{(}->}[rrr] &&& \AL \ar[dl] \\
    \CBoolean \ar@{<-}[ur]^-{\cong} \ar@{^{(}->}|(.475){\hole}[rrr] &&& \cOML \\
    && \cAWstar \ar[uul]_(.2){\ActiveProj} \ar[ull]^-{\Proj}_-{\simeq} \ar@{^{(}->}[rrr] 
    &&& \AWstar \ar[ull]^-{\Proj} \ar[uul]_-{\ActiveProj}
  }\]
\end{example}

\section{Recovering total algebras from piecewise algebras}
\label{sec:fullness}

This section proves that the functor $\ActiveProj$ of
Proposition~\ref{prop:thefunctor} is full. The proof distinguishes two
cases. First, we adapt a theorem of Dye to deal with algebras without
type $\mathrm{I}_2$ summands. Subsequently we deal with algebras of
type $\mathrm{I}_2$ directly. 

\subsection*{Algebras without $\mathrm{I}_2$ summand and a theorem of Dye}

To facilitate the proof of Theorem~\ref{thm:dye} below, we give a
sequence of preparatory lemmas. Several of these are adapted from
Dye's results in~\cite[Section~3]{dye:projections}. Let $A$ be an
AW*-algebra. Any matrix ring $\M_n(A)$ is an AW*-algebra;
see~\cite[Section~62]{berberian}. If $x$ is a row vector in $A^n$
one of whose entries is a projection, then there is a projection in
$\M_n(A)$ whose range is the submodule $Ax \subseteq A^n$ according  
to~\cite[Lemma~2]{dye:projections}. 
We shall refer to these projections in $\M_n(A)$ as \emph{vector projections.}

In particular, given two distinct indices $1\leq i,j \leq n$ and an
element $\alpha \in A$, there is a projection as above where the
vector $x$ is taken to have $1$ in the $i$th entry, $\alpha$ in the 
$j$th entry, and all other entries zero. We denote the corresponding
projection in $\M_n(A)$ by $p_{ij}(\alpha)$. For instance, when $n =
2$, $i = 1$, and $j = 2$, we have 
\[
  p_{12}(\alpha) = \begin{pmatrix}
    (1+\alpha \alpha^*)^{-1} & (1 + \alpha \alpha^*)^{-1}\alpha \\
    \alpha^* (1 + \alpha \alpha^*)^{-1} & \alpha^* (1 + \alpha \alpha^*)^{-1} \alpha
  \end{pmatrix}.
\]
For larger $n$, we follow the convention to only write down the nonzero
2-by-2 parts of such $n$-by-$n$ matrices.
Notice that if $p_{ij}(\alpha) = p_{ij}(\beta)$ for some $\alpha,\beta \in A$,
then $\alpha = \beta$.

\begin{lemma}\label{lem:generatingprojections}
  Let $A$ be an AW*-algebra.
  \begin{enumerate}[(a)]
  \item Sublattices of $\Proj(\M_n(A))$ containing all
    $p_{ij}(\alpha)$ contain all vector projections.
  \item Any projection in $\M_n(A)$ is the supremum of (orthogonal)
    vector projections.
  \end{enumerate}
  Hence the $p_{ij}(\alpha)$ generate $\Proj(\M_n(A))$ as a
    complete orthomodular lattice.
\end{lemma}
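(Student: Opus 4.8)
The plan is to establish parts (a) and (b) separately and then combine them, working throughout with the identification of a projection in $\M_n(A)$ with its range, a submodule of the $A$-module $A^n$. Under this identification meets correspond to intersections, orthocomplements to orthogonal complements for the $A$-valued inner product $\langle y,z\rangle=\sum_k y_k z_k^*$, and joins to closed sums of submodules.

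For part (a), I would first observe that any sublattice containing all $p_{ij}(\alpha)$ already contains convenient auxiliary projections. Since $p_{ij}(0)$ has range $Ae_i$, it is the diagonal matrix unit $e_{ii}$, and computing the intersection $\range p_{ij}(\alpha)\cap Ae_i=\{ae_i : a\alpha=0\}$ gives
\[
  p_{ij}(\alpha)\wedge p_{ij}(0)=\diag(0,\dots,1-\RP(\alpha^*),\dots,0),
\]
the nonzero entry sitting in slot $i$; as $\alpha$ ranges over $A$ this produces every diagonal projection with a single arbitrary projection entry. The core computation is then the orthogonal complement of a cyclic summand: for $x=e_i+\sum_{j\neq i}\alpha_j e_j$ with a $1$ in slot $i$, a direct calculation yields
\[
  (Ax)^\perp=\bigvee_{j\neq i}A(e_j-\alpha_j^*e_i)=\bigvee_{j\neq i}\range p_{ji}(-\alpha_j^*),
\]
so the vector projection with range $Ax$ equals $(\bigvee_{j\neq i}p_{ji}(-\alpha_j^*))^\perp$, a finite join of generators followed by an orthocomplement. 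This settles every vector projection whose distinguished entry is $1$. The general case, where the distinguished entry is an arbitrary projection $q$, I would reduce to this one by passing to the corner determined by $q$ and combining with the diagonal projections just constructed; this bookkeeping is routine but is the first place where care is genuinely needed.

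For part (b), I would show that an arbitrary projection $P\in\M_n(A)$ is a supremum of orthogonal vector projections by an AW*-algebra version of Gram--Schmidt. Starting from a nonzero $v_0\in\range P$, polar decomposition in the AW*-algebra, using the right supporting projections supplied by Definition~\ref{def:awstar}, replaces $v_0$ by a vector $v$ that spans a cyclic direct summand $Av\subseteq\range P$ and has a projection entry, so that $Av$ is a genuine vector projection. Splitting this summand off via the orthomodular law and recursing on its orthogonal complement inside $\range P$ produces an orthogonal family of vector projections whose supremum exists by completeness of $\Proj(\M_n(A))$ and, by construction, equals $P$. I expect this orthogonalization to be the main obstacle, since it is precisely where the full AW*-structure is used: polar decomposition, comparison of projections, and the passage from orthogonal suprema to arbitrary suprema via Lemma~\ref{lem:orthocomplete}.

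Finally I would combine the two parts. By (b) every projection of $\M_n(A)$ is a join of vector projections, and by (a) each vector projection lies in the sub-orthomodular-lattice generated by the $p_{ij}(\alpha)$ under finite joins, meets, and orthocomplements. Since $\Proj(\M_n(A))$ is complete, closing the generators under arbitrary suprema as well therefore yields every projection, which is exactly the claim that the $p_{ij}(\alpha)$ generate $\Proj(\M_n(A))$ as a complete orthomodular lattice.
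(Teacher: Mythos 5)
Your proposal is correct in substance and, at bottom, runs along the same lines as the paper; the difference is mainly one of sourcing and phrasing. The paper disposes of part (a) by citing Dye's Lemma~7 outright, and extracts from the \emph{proof} of that same lemma the one fact needed for part (b): every nonzero projection of $\M_n(A)$ dominates a nonzero vector projection. Part (b) is then a Zorn's lemma argument --- take a maximal orthogonal family $S$ of nonzero vector projections below $p$; if $\bigvee S < p$, the key fact yields a vector projection below $p - \bigvee S$, contradicting maximality. Your ``AW*-Gram--Schmidt'' is this same argument in recursive clothing: to be rigorous it still needs Zorn or transfinite recursion to produce the orthogonal family, and your phrase ``by construction, equals $P$'' is precisely the maximality contradiction, which should be said explicitly. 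Your reconstruction of the part (a) computation (which the paper never writes out) is sound: the meet $p_{ij}(\alpha)\wedge e_{ii}$ does give $\diag(0,\dots,1-\RP(\alpha^*),\dots,0)$, the identity $(Ax)^\perp=\sum_{j\neq i}A(e_j-\alpha_j^*e_i)$ holds as an exact module sum (so there is no closure issue in identifying it with the join), and the reduction from a projection entry $q$ to entry $1$ does work: with $y=e_i+\sum_{j\neq i}\alpha_je_j$ one checks $Ay=Ax\oplus A(1-q)e_i$ orthogonally, whence $e_x=e_y\wedge(\diag(0,\dots,1-q,\dots,0))^{\perp}$ by orthomodularity. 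Two caveats. First, since your expressions invoke the global orthocomplement, what you actually prove is that ortho-sublattices containing the $p_{ij}(\alpha)$ contain all vector projections --- formally weaker than (a) as stated for mere sublattices (Dye's lattice polynomials avoid global complements), though still enough for the final generation claim and for every application in the paper, e.g.\ in Theorem~\ref{thm:dye}, where the maps are $\cOML$-morphisms. Second, in part (b) the passage from a nonzero $v_0\in\range P$ to a vector projection takes more than one application of polar decomposition: one first inverts $v_0v_0^*$ on a spectral corner to replace $v_0$ by $v$ with $vv^*$ a projection, and then uses the polar decomposition of a single entry of $v$, multiplying through by the adjoint of its partial isometry part, to force an entry that is an honest projection and supports the remaining entries. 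You rightly flag this as the crux, and it does go through in any AW*-algebra, but as written it is a placeholder rather than a proof.
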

\begin{proof}
  Part~(a) is proven as in~\cite[Lemma~7]{dye:projections}.
  For~(b), first note that the proof of~\cite[Lemma~7]{dye:projections} illustrates that
  every nonzero element of $\Proj(\M_n(A))$ contains a nonzero vector projection.
  Fix $p \in \Proj(\M_n(A))$. Zorn's lemma gives a maximal set $S$ of orthogonal
  nonzero homogeneous projections below $p$. We claim that $p$ equals
  $p_0=\bigvee S$. Otherwise $p_0 < p$, so that there would be a nonzero
  vector projection $q \leq p - p_0$. 
  Because $p - p_0 \leq p$, transitivity gives $q \leq p$. Combined with $q \leq p - p_0$,
  this implies $q$ is orthogonal to $p_0$. It follows that $q$ is
  orthogonal to $\Proj(S)$, so $S \sqcup \{q\}$ is an orthogonal set
  of projections below $p$, contradicting maximality.
\end{proof}

We denote by $e_{ij} \in \M_n(A)$ the matrix unit whose $i,j$-entry is
1 and every other entry is zero. Note that $e_{ii} = p_{ij}(0)$ for
any $j \neq i$.
For a projection $p$ in an AW*-algebra, we denote by $s_p = 1 - 2p$ the
corresponding symmetry.

\begin{lemma}\label{lem:unitaryimage}
  Let $A$ and $B$ be AW*-algebras. If $f \colon \Proj(\M_n(A)) \to
  \Proj(\M_n(B))$ is a function satisfying $f(e_{ii})=e_{ii}$ and
  $f(s_p q s_p) = s_{f(p)} f(q) s_{f(p)}$, then for each $i,j$
  and each $\zeta \in U(A)$ there is a unique $\xi \in U(B)$ with $f(p_{ij}(\zeta)) = p_{ij}(\xi)$.
\end{lemma}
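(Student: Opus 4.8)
The plan is to prove existence and uniqueness separately, with uniqueness being immediate: the remark preceding the lemma records that $\alpha \mapsto p_{ij}(\alpha)$ is injective, so once we know $f(p_{ij}(\zeta))$ has the form $p_{ij}(\xi)$, the element $\xi$ is determined. Everything therefore reduces to exhibiting such a $\xi \in U(B)$. The central observation is that for \emph{unitary} $\zeta$ the vector projection has a symmetric form: one checks that $p_{ij}(\zeta) = \tfrac12\bigl(e_{ii}+e_{jj}+\zeta e_{ij}+\zeta^* e_{ji}\bigr)$, and hence the associated symmetry
\[
  v_\zeta := s_{p_{ij}(-\zeta)} = \Bigl(\textstyle\sum_{k\neq i,j} e_{kk}\Bigr) + \zeta e_{ij} + \zeta^* e_{ji}
\]
acts as the identity off the $(i,j)$-corner and as the off-diagonal symmetry $\left(\begin{smallmatrix} 0 & \zeta \\ \zeta^* & 0\end{smallmatrix}\right)$ on it. A direct matrix computation then gives the swap relation $v_\zeta\, e_{ii}\, v_\zeta = e_{jj}$, valid precisely because $\zeta\zeta^* = \zeta^*\zeta = 1$.

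Next I would transfer this relation through $f$. Writing the relation as $s_{p_{ij}(-\zeta)}\, e_{ii}\, s_{p_{ij}(-\zeta)} = e_{jj}$ and applying the equivariance hypothesis together with $f(e_{ii}) = e_{ii}$ and $f(e_{jj}) = e_{jj}$ yields
\[
  s_{q}\, e_{ii}\, s_{q} = e_{jj}, \qquad q := f(p_{ij}(-\zeta)) \in \Proj(\M_n(B)).
\]
I then solve this equation for the symmetry $s_q$. The equivalent identity $s_q e_{ii} = e_{jj} s_q$ forces, entry by entry, that column $i$ and row $j$ of $s_q$ are supported only at position $(j,i)$, and by self-adjointness that row $i$ and column $j$ are supported only at $(i,j)$; setting $\xi := (s_q)_{ji}$, the relation $s_q^2 = 1$ makes $\xi$ unitary, with $(s_q)_{ij} = \xi^*$ and $(s_q)_{ii} = (s_q)_{jj} = 0$. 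In the case $n = 2$ there is nothing outside the corner, so $s_q = \left(\begin{smallmatrix} 0 & \xi \\ \xi^* & 0\end{smallmatrix}\right)$ and $q = \tfrac{1-s_q}{2} = p_{ij}(-\xi)$; relabeling $\zeta$ then gives $f(p_{ij}(\zeta)) = p_{ij}(\xi)$ for every unitary $\zeta$, completing the $n=2$ argument cleanly. For general $n$ the same computation pins down the entire $(i,j)$-corner of $q$, and shows $q$ has no support in the rows/columns indexed by $i$ or $j$ outside the corner; the only surviving freedom is a symmetry $w$ on the complementary block $\sum_{k\neq i,j} e_{kk}$.

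The main obstacle is thus to eliminate this complementary residue, i.e.\ to show $f(p_{ij}(\zeta)) \leq e_{ii}+e_{jj}$ so that $q = p_{ij}(-\xi)$ exactly. Here I would exploit that equivariance forces $f$ to preserve commutativity of projections: since $s_p\, r\, s_p = r$ holds if and only if $p$ and $r$ commute, transferring this relation shows that commuting projections have commuting images. Applying it to the orthogonality relations $p_{ij}(\zeta) \perp e_{kk}$ for $k \neq i,j$ shows that $q$ commutes with each such $e_{kk}$, which already constrains $w$ to be diagonal. The delicate point — and the crux of the argument — is to \emph{upgrade} this transferred commutativity to genuine orthogonality, since conjugation by a single symmetry cannot distinguish $r \leq e_{kk}$ from $r \perp e_{kk}$. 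I expect this to require transporting the further relations that $v_\zeta$ fixes the corner projections $p_{kl}(1)$ for $k,l \neq i,j$, combined with a maximal-orthogonal-family argument in the spirit of Lemma~\ref{lem:generatingprojections}(b), or alternatively a reduction of the general-$n$ statement to the fully resolved $n=2$ case by first confining $f$ to the $2$-by-$2$ corner. This confinement step, rather than the swap computation, is where the real work lies.
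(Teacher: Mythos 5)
Your strategy is essentially the paper's: uniqueness from injectivity of $\alpha \mapsto p_{ij}(\alpha)$, then transport of the swap relation $s_{p_{ij}(\zeta)} e_{ii} s_{p_{ij}(\zeta)} = e_{jj}$ through $f$ via equivariance and $f(e_{kk}) = e_{kk}$, then solving entrywise for the symmetry $s_q$, $q = f(p_{ij}(\zeta))$. Your $n=2$ case is complete and correct. But for $n \geq 3$ your proposal stops exactly where you say it does: the residual symmetry $w$ on the block $1 - e_{ii} - e_{jj}$ is never eliminated, and the repairs you gesture at cannot close the gap as stated. Transporting the relations ``$v_\zeta$ fixes $p_{kl}(1)$ for $k,l \neq i,j$'' yields only that $q$ commutes with $f(p_{kl}(1))$, and since each such image carries its own unknown residue, one again faces the problem you already diagnosed: commutation cannot distinguish a piece lying below $e_{kk}$ from one orthogonal to it. And ``confining $f$ to the $2$-by-$2$ corner'' presupposes the confinement $q \leq e_{ii} + e_{jj}$ that is to be proven. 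So as written the proposal does not establish the lemma for $n \geq 3$.

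You should know, however, that you have put your finger on a genuine defect in the paper's own proof of Lemma~\ref{lem:unitaryimage}. The paper asserts that if conjugation by $1-2p$ fixes every $e_{kk}$ with $k \neq i,j$, then $p$ is supported in rows and columns $i$ and $j$, and concludes that the $p_{ij}(\zeta)$ with $\zeta$ unitary are \emph{precisely} the projections whose symmetries swap $e_{ii}, e_{jj}$ and fix the other $e_{kk}$. For $n \geq 3$ this is false: $p = p_{ij}(\zeta) + e_{kk}$ is a projection, $s_p = s_{p_{ij}(\zeta)} s_{e_{kk}}$ is a product of commuting symmetries, and conjugation by it swaps $e_{ii} \leftrightarrow e_{jj}$ while fixing every other diagonal unit, yet $p$ is not of vector form. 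Commutation with all the $e_{kk}$ forces $s_q$ only to be block diagonal, with arbitrary symmetries as its diagonal blocks outside the corner---exactly your $w$. Two ways to repair this. First, in the lemma's sole application (Theorem~\ref{thm:dye}), $f$ is moreover an $\cOML$-morphism, hence preserves orthogonality; from $p_{ij}(\zeta) \perp e_{kk}$ one gets $q \perp e_{kk}$ for all $k \neq i,j$, i.e.\ $q \leq e_{ii}+e_{jj}$, which is your confinement, after which your corner computation finishes. Second, under the lemma's bare hypotheses one can use that $e_{ii}$ and $p_{ij}(\zeta)$ are exchanged by an explicit Householder-type symmetry $s_r$ (with $r$ a projection supported in the $(i,j)$-corner, entries real-linear combinations of $1, \zeta, \zeta^*$); equivariance then gives $s_{f(r)} e_{ii} s_{f(r)} = q$, so $s_{f(r)}$ exchanges $e_{ii} \wedge q^{\perp}$ with $e_{ii}^{\perp} \wedge q$. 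The corner structure you derived shows $e_{ii} \wedge q^{\perp} = 0$, while $e_{ii}^{\perp} \wedge q$ dominates the residue $\tfrac{1}{2}(1-w)$ on the complementary block, forcing $w = 1$. Either argument turns your (and the paper's) incomplete proof into a correct one.
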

\begin{proof}
  Notice that for $\zeta \in U(A)$, we have (in ``2-by-2 shorthand'')
  \[
  p_{ij}(\zeta) = \frac{1}{2}
  \begin{pmatrix}
    1 & \zeta \\
    \zeta^* & 1
  \end{pmatrix}.
  \]
  It is easy to see that
  conjugation by $1-2p_{ij}(\zeta)$ swaps $e_{ii}$ and $e_{jj}$ while
  leaving the remaining diagonal matrix units fixed. Conversely, if $p
  \in \Proj(\M_n(A))$ is such that conjugation by $1-2p$ leaves
  $e_{kk}$ fixed for $k \neq i,j$, then it
  must equal the identity everywhere except in rows and columns $i$
  and $j$. Hence we can write $p=\left(\begin{smallmatrix} \alpha & \beta \\
  \beta^* & \gamma \end{smallmatrix}\right)$ in ``2-by-2 shorthand''. If
  $e_{ii} = (1-2p) e_{jj} (1-2p)$, then $\alpha=\frac{1}{2}$ and
  $\beta^*\beta=\frac{1}{4}$, and it follows from $p=p^2$ that
  $\gamma=\frac{1}{2}$ and $\beta\beta^*=\frac{1}{4}$. Hence
  the projections of the form $p_{ij}(\zeta)$ with $\zeta$ unitary are
  precisely those projections $p$ for which 
  conjugation with $1-2p$ swaps $e_{ii}$ and 
  $e_{jj}$ while leaving the other $e_{kk}$ fixed. 

  Now, because of the assumptions that $f$ sends diagonal matrix
  units to diagonal matrix units, and is equivariant, the same
  statement is true about $f(p_{ij}(\zeta))$. Hence there is some
  unitary $\xi \in U(B)$ such that $f(p_{ij}(\zeta)) =
  p_{ij}(\xi)$; uniqueness follows.
\end{proof}

Recall that a $\C$-linear function $f \colon A \to B$ between
$C^*$-algebras that preserves the involution is a \emph{Jordan
  $*$-homomorphism} if it preserves the Jordan product $a \circ b =
\frac{1}{2}(ab+ba)$; this is readily seen to be equivalent to the
property that $f$ preserves the square of every element.

\begin{lemma}\label{lem:jordan}
  Given a $*$-ring homomorphism $A \to B$ between
  C*-algebras, there is a unique Jordan $*$-homomorphism 
  $A \to B$ that equals it on
  self-adjoint elements.
\end{lemma}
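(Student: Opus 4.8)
The plan is to denote the given $*$-ring homomorphism by $\phi \colon A \to B$ and to analyze first its restriction to self-adjoint elements. Since $\phi$ is multiplicative and preserves the involution, it sends the square of a self-adjoint element to the square of a self-adjoint element; as every positive element of a C*-algebra is such a square, $\phi$ maps positive elements to positive elements. Combined with additivity, this shows that $\phi$ is \emph{monotone} on the self-adjoint part $A_{\mathrm{sa}}$: if $a \leq b$ are self-adjoint, then $\phi(b) - \phi(a) = \phi(b-a) \geq 0$. The key point will be to upgrade the a priori merely additive map $\phi|_{A_{\mathrm{sa}}}$ to an $\R$-linear one, and monotonicity is exactly the extra ingredient that makes this possible.

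First I would observe that $\phi|_{A_{\mathrm{sa}}}$ is $\mathbb{Q}$-linear: additivity gives $\phi(na) = n\phi(a)$ for $n \in \mathbb{Z}$, and since $B$ is a $\C$-vector space (so multiplication by a nonzero integer is injective), the relation $n\phi((m/n)a) = \phi(ma) = m\phi(a)$ forces $\phi((m/n)a) = (m/n)\phi(a)$. To pass from $\mathbb{Q}$ to $\R$, I would exploit the squeeze afforded by monotonicity: for positive $a$ and real $t$, any rationals with $q_1 \leq t \leq q_2$ satisfy $q_1 a \leq ta \leq q_2 a$, hence $q_1\phi(a) \leq \phi(ta) \leq q_2\phi(a)$, so that $0 \leq \phi(ta) - q_1\phi(a) \leq (q_2 - q_1)\phi(a)$ has norm at most $(q_2-q_1)\|\phi(a)\|$. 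Letting $q_1, q_2 \to t$ yields $\phi(ta) = t\phi(a)$. A general self-adjoint element decomposes as $a = a_+ - a_-$ with $a_\pm \geq 0$, and additivity then extends the identity $\phi(ta) = t\phi(a)$ to all self-adjoint $a$ and all $t \in \R$. Thus $\phi|_{A_{\mathrm{sa}}}$ is $\R$-linear.

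Finally I would extend and verify the Jordan property. Writing each $a \in A$ uniquely as $a = x + iy$ with $x = (a+a^*)/2$ and $y = (a - a^*)/(2i)$ self-adjoint, I set $\psi(a) = \phi(x) + i\phi(y)$. By the previous paragraph this is $\C$-linear, it preserves the involution since $\psi(a^*) = \phi(x) - i\phi(y) = \psi(a)^*$, and it agrees with $\phi$ on $A_{\mathrm{sa}}$. To see that $\psi$ is a Jordan $*$-homomorphism it suffices to check that it preserves squares. Expanding $a^2 = (x^2 - y^2) + i(xy + yx)$ into its self-adjoint real and imaginary parts and applying $\phi$ (using multiplicativity $\phi(xy) = \phi(x)\phi(y)$ and $\phi(x^2) = \phi(x)^2$) reproduces exactly $(\phi(x) + i\phi(y))^2 = \psi(a)^2$. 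Hence $\psi$ preserves all squares, and for a linear $*$-map this is equivalent to preserving the Jordan product. Uniqueness is immediate: any Jordan $*$-homomorphism agreeing with $\phi$ on $A_{\mathrm{sa}}$ is forced by $\C$-linearity, since $A = A_{\mathrm{sa}} + iA_{\mathrm{sa}}$. The main obstacle is the single step of promoting additivity to $\R$-linearity on self-adjoint elements; everything else is bookkeeping once positivity-preservation, hence monotonicity, is in hand.
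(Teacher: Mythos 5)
Your proof is correct, but it takes a genuinely different route from the paper's at the key step. The starting point is shared: the paper likewise notes that a $*$-ring homomorphism is $\mathbb{Q}$-linear and that positivity-preservation upgrades this to $\R$-linearity (your monotonicity-plus-squeeze argument, using $0 \leq \phi(ta) - q_1\phi(a) \leq (q_2-q_1)\phi(a)$, supplies details the paper only asserts). The divergence is in how the Jordan map is produced. You define it by fiat, $\psi(x+iy) = \phi(x) + i\phi(y)$, and check square-preservation by expanding $a^2 = (x^2-y^2) + i(xy+yx)$ and using the full ring multiplicativity of $\phi$ on the mixed products $xy$ and $yx$. The paper instead observes that $if(i)$ is a self-adjoint unitary commuting with the image of $f$, forms the complementary projections $q_\pm = \tfrac{1}{2}(1 \mp if(i))$, splits $f = f_+ + f_-$ into a $\C$-linear and a $\C$-anti-linear $*$-ring homomorphism landing in the orthogonal corners $q_\pm B q_\pm$, and sets $g = f_+ + (f_-)^*$; square-preservation then follows from orthogonality of the two ranges rather than from a direct computation. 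The two constructions necessarily produce the same map (indeed $f_\pm(a) = q_\pm(f(x) \pm if(y))$, so $g(a) = f(x)+if(y) = \psi(a)$), but they buy different things: your route is shorter and more elementary, needing no projections or decomposition, while the paper's route exposes the structure behind the statement --- the failure of linearity of $f$ is concentrated in the single central symmetry $if(i)$, and the Jordan map is exhibited as a homomorphism on one corner glued to an (involution-twisted) anti-homomorphism on the other, which is precisely the general form a Jordan $*$-homomorphism takes. Either argument is a complete proof of the lemma.
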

\begin{proof}
  Let $f \colon A \to B$ be a $*$-ring homomorphism.
  As it preserves $1$ it is $\mathbb{Q}$-linear, and it
  follows from preserving positivity that it is in fact $\R$-linear.
  Define complementary projections $q_- = \frac{1}{2}(1+if(i))$ and
  $q_+ = \frac{1}{2}(1-if(i))$ in $B$. Setting
  \begin{align*}
    f_- & \colon A \to q_- B q_- & f_-(a) & =
    \tfrac{1}{2}(f(a) + if(ia)) \\
    f_+ & \colon A \to q_+ B q_+ & f_+(a) & =
    \tfrac{1}{2}(f(a) - if(ia))
  \end{align*}
  gives $*$-ring homomorphisms, where $f_-$ is $\mathbb{C}$-anti-linear and
  $f_+$ is $\mathbb{C}$-linear. Clearly $f=f_+ +f_-$.
  Now define $g \colon A \to B$ by
  \[
  g(a) = f_+(a) + (f_-(a))^*.
  \]
  This $\C$-linear function preserves the involution
  and agrees with $f$ on self-adjoint elements. It is easy to verify
  that it preserves the operation of squaring because the images of
  $f_+$ and $f_-$ are orthogonal in $B$. 
  Uniqueness is straightforward.
\end{proof}

The following lemma records some results of
Dye~\cite{dye:projections} about properties of the ``coordinate
assignment'' from Lemma~\ref{lem:unitaryimage}. 
Basically, it expresses algebraic operations on the coordinates in
lattice-theoretic terms. The subsequent lemma will use these properties to
establish a $*$-ring homomorphism, following~\cite[Lemmas~6 and~8]{dye:projections}.
Recall that a \emph{lattice polynomial} is an expression 
combining a finite number of variables using $\wedge$ and $\vee$;
these are preserved by morphisms in $\cOML$.

\begin{lemma}\label{lem:latticepolynomials}
  There exist lattice polynomials $P$, $Q$, and $R$ such that, for any
  elements $\alpha,\beta,\gamma$ of a C*-algebra $A$ with $\gamma$
  invertible, any integer $n \geq 3$, and any distinct indices
  $1 \leq i,j,k \leq n$, the following hold:
  \begin{enumerate}[\quad (a)]
  \item $p_{ij}(\alpha+\beta) = P\big( p_{ij}(\alpha), p_{ij}(\beta), p_{ik}(\gamma), e_{ii},
    e_{jj}, e_{kk} \big)$;
  \item $p_{ij}(-\alpha\beta) = Q\big(p_{ik}(\alpha), p_{kj}(\beta), e_{ii}, e_{jj}\big)$;
  \item $p_{ij}(-\alpha^*) = R\big (p_{ji}(\alpha), e_{ii}, e_{jj} \big)$.
  \end{enumerate}
\end{lemma}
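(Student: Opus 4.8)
The plan is to produce each of the three lattice polynomials $P$, $Q$, $R$ explicitly by computing, in the concrete matrix algebra $\M_n(A)$, what the meets and joins of the relevant vector projections actually are, and then to recognize the algebraic coordinate operation (addition, negated product, negated adjoint) in the resulting formula. Since morphisms of $\cOML$ preserve lattice polynomials, it suffices to verify the identities as genuine equalities of projections in $\M_n(A)$; the point of the lemma is that these particular coordinate operations can be expressed \emph{uniformly} (independent of $A$ and of the entries) by fixed combinations of $\wedge$ and $\vee$. I would follow Dye~\cite[Section~3]{dye:projections} closely here, since the analogous identities over $\C$ are exactly what Dye establishes, and the arguments are algebraic enough to transfer to a general AW*-algebra $A$.

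First I would recall the geometric meaning of the projections involved: $p_{ij}(\alpha)$ is the projection onto the cyclic submodule $A\cdot(e_i + \alpha e_j)$, while $e_{ii}$ projects onto $Ae_i$. The key computational device is that, for vector projections onto ``graph-like'' submodules spanned by rows with a single leading $1$, intersections and joins of such submodules are again of the same form, and their spanning vectors can be read off by elementary linear algebra over $A$. For part~(c), the adjoint identity $p_{ij}(-\alpha^*) = R(p_{ji}(\alpha), e_{ii}, e_{jj})$ should come from the fact that conjugating $p_{ji}(\alpha)$ by the symmetry interchanging rows $i$ and $j$ (which is itself a lattice-theoretic operation once $e_{ii}, e_{jj}$ are available, via the swap realized earlier in Lemma~\ref{lem:unitaryimage}) produces exactly $p_{ij}(-\alpha^*)$; I would aim to express this swap-and-project purely through joins and meets with $e_{ii}$ and $e_{jj}$. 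For part~(b), the multiplicative identity is the heart of the matter: one composes the submodules spanned by $e_i + \alpha e_k$ and $e_k + \beta e_j$ through the shared index $k$, so that intersecting the join of appropriate vector projections with the $(i,j)$-plane $e_{ii}\vee e_{jj}$ yields the projection onto $A(e_i - \alpha\beta\, e_j)$, i.e.\ $p_{ij}(-\alpha\beta)$; this is why a third index $k$ and the hypothesis $n \geq 3$ are needed. For part~(a), addition is obtained from multiplication and a fixed invertible ``translation'' coordinate $\gamma$ along index $k$, combining $p_{ij}(\alpha)$, $p_{ij}(\beta)$, and $p_{ik}(\gamma)$ in the manner of Dye's additivity argument.

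I would then check that each displayed formula holds as an identity of projections for \emph{all} admissible inputs — in particular that the invertibility of $\gamma$ in~(a) is genuinely used to make the translation trick work, and that the polynomials $P$, $Q$, $R$ depend only on the combinatorial data (the indices and the join/meet pattern) and not on $A$, $\alpha$, $\beta$, $\gamma$. The cleanest way to organize this is to prove~(b) and~(c) first by direct submodule computations, and then derive~(a) from them.

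The main obstacle I anticipate is establishing the multiplicative identity~(b) at the level of lattice operations rather than just algebraically: verifying that the specific join-then-meet combination of $p_{ik}(\alpha)$ and $p_{kj}(\beta)$ really lands on $p_{ij}(-\alpha\beta)$ requires computing the intersection of submodules over the noncommutative ring $A$ and controlling it uniformly. Over $\C$ this is linear algebra, but over an AW*-algebra one must ensure the relevant submodules are still \emph{closed} and are ranges of projections (which is where the AW*-structure and the theory of vector projections from~\cite[Lemma~2]{dye:projections} enter), and that the resulting spanning vector is exactly $e_i - \alpha\beta\, e_j$ with no correction terms. Once~(b) is secured, the minus signs and the role of $\gamma$ in~(a) fall into place by the same bookkeeping Dye uses, so I expect the multiplicative step to absorb essentially all of the difficulty.
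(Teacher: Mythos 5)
Your outlines for parts (a) and (b) are essentially right, and they match what the paper relies on: the paper's own ``proof'' is nothing more than a citation of \cite[Lemmas~3--5]{dye:projections}, whose content is exactly the submodule computations you describe. For instance, for (b) one checks that $\big(p_{ik}(\alpha) \vee p_{kj}(\beta)\big) \wedge (e_{ii} \vee e_{jj}) = p_{ij}(-\alpha\beta)$ by intersecting the submodule $\{\, x e_i + (x\alpha + y)e_k + y\beta e_j \mid x,y \in A \,\}$ with the $(i,j)$-plane, and these computations transfer from $\C$ to a general C*-algebra $A$ because all the projections involved are given by explicit matrix formulas (so the relevant meets and joins exist and can be exhibited by hand).

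Part (c) of your plan, however, contains a genuine gap. First, the factual claim is wrong: conjugating by the swap symmetry $u = e_{ij} + e_{ji} + \sum_{k \neq i,j} e_{kk}$ carries the range $A(e_j + \alpha e_i)$ of $p_{ji}(\alpha)$ to $A(e_i + \alpha e_j)$, so it produces $p_{ij}(\alpha)$, \emph{not} $p_{ij}(-\alpha^*)$. Second, and more fundamentally, your stated aim of expressing the result ``purely through joins and meets with $e_{ii}$ and $e_{jj}$'' is provably impossible. Take $A = \C$ and $\alpha = 1$: then $p_{ji}(1) \wedge e_{ii} = p_{ji}(1) \wedge e_{jj} = e_{ii} \wedge e_{jj} = 0$, while all pairwise joins of the three generators equal $e_{ii} \vee e_{jj}$, so the sublattice they generate under $\wedge$ and $\vee$ alone is the five-element lattice $\{0,\, p_{ji}(1),\, e_{ii},\, e_{jj},\, e_{ii} \vee e_{jj}\}$, which does not contain $p_{ij}(-1)$ (the projection onto $\C(e_i - e_j)$). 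The involution and the minus sign in (c) enter precisely through the \emph{orthocomplement}: the orthogonal complement of the graph of $\alpha$ is the graph of $-\alpha^*$, which is Dye's Lemma~3(i), namely $p_{ij}(-\alpha^*) = p_{ji}(\alpha)^\perp \wedge (e_{ii} \vee e_{jj})$. So $R$ is not a pure $\{\wedge,\vee\}$-polynomial but requires $\perp$; this is harmless for the paper's use of the lemma in Lemma~\ref{lem:coordinatefunction}, because morphisms of $\cOML$ preserve orthocomplements, but the route you propose for (c) cannot be completed as stated, and any correct proof must bring the orthocomplementation into the polynomial.
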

\begin{proof}
  See~\cite[Lemma~5]{dye:projections},
  \cite[Lemma~4]{dye:projections},
  and~\cite[Lemma~3(i)]{dye:projections}, respectively.
\end{proof}


\begin{lemma}\label{lem:coordinatefunction}
  Let $f \colon \Proj(\M_n(A)) \to \Proj(\M_n(B))$ be a morphism of
  $\cOML$ for AW*-algebras $A,B$, and $n\geq 3$. Suppose 
  $f(e_{ii})=e_{ii}$ for all $i$, and that for any distinct $i,j$ and
  any $\zeta \in U(A)$ there is $\xi \in U(B)$ with $f(p_{ij}(\zeta)) = p_{ij}(\xi)$.
  Then there is a diagonal $W \in U(\M_n(B))$ such that:
  \begin{enumerate}[\quad (a)]
  \item there is a function $\varphi \colon U(A) \to U(B)$ satisfying
    the ``coordinate condition''
    \begin{equation*}
      f(p_{ij}(\alpha)) = W^*p_{ij}(\varphi(\alpha))W 
    \end{equation*}
    for all $\alpha \in U(A)$ and distinct indices $i,j$;
  \item $\varphi$ extends to a $*$-ring homomorphism $A \to B$
    satisfying the coordinate condition for all $\alpha \in A$ and 
    distinct $i,j$. 
  \end{enumerate}
\end{lemma}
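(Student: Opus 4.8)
The plan is to follow Dye's arguments in \cite[Lemmas~6 and~8]{dye:projections}, splitting the work according to the two parts of the statement. Throughout I will use that, by the hypothesis together with the injectivity remark preceding Lemma~\ref{lem:generatingprojections} (that $p_{ij}(\alpha) = p_{ij}(\beta)$ forces $\alpha = \beta$), each pair of distinct indices $i,j$ determines a well-defined \emph{coordinate function} $\varphi_{ij} \colon U(A) \to U(B)$ by the requirement $f(p_{ij}(\zeta)) = p_{ij}(\varphi_{ij}(\zeta))$. The entire content of part (a) is that, after conjugating by a single diagonal unitary, all of these functions collapse to one function $\varphi$.

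For part (a): since $f$ is a morphism of $\cOML$, it commutes with the lattice polynomials $P,Q,R$ of Lemma~\ref{lem:latticepolynomials}; applying $f$ to those identities and reading off coordinates (using $f(e_{ii}) = e_{ii}$ and uniqueness) yields relations among the $\varphi_{ij}$. The multiplication polynomial $Q$ in particular gives, for distinct $i,j,k$ and unitary $\alpha,\beta$, a cocycle-type identity $\varphi_{ij}(-\alpha\beta) = -\varphi_{ik}(\alpha)\varphi_{kj}(\beta)$; the availability of a third index $k$ is exactly where the hypothesis $n \geq 3$ enters, and the adjoint polynomial $R$ similarly gives $\varphi_{ij}(-\alpha^*) = -\varphi_{ji}(\alpha)^*$. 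I would then fix a reference pair, set $\varphi := \varphi_{12}$, and use these identities to manufacture unitaries $w_1,\dots,w_n \in U(B)$ (built from values such as $\varphi_{1k}(1)$) and put $W = \diag(w_1,\dots,w_n)$. Since for unitary $\xi$ one computes $W^* p_{ij}(\xi) W = p_{ij}(w_i^* \xi w_j)$, the desired coordinate condition $f(p_{ij}(\alpha)) = W^* p_{ij}(\varphi(\alpha)) W$ is equivalent to $\varphi_{ij}(\alpha) = w_i^* \varphi(\alpha) w_j$, and the cocycle relation is precisely what guarantees that one choice of $\varphi$ and of the $w_i$ makes this hold simultaneously for every pair.

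For part (b): I would first reduce to the case $W = 1$ by replacing $f$ with $\mathrm{Ad}_W \circ f$, which still fixes each $e_{ii}$ (as $W$ is diagonal), is still a $\cOML$-morphism, and whose coordinate functions are now literally $\varphi$; a $*$-ring homomorphism obtained for this map transports back by conjugation. I then extend $\varphi$ to $A$ by declaring $\varphi(\alpha)$ to be the unique element with $f(p_{ij}(\alpha)) = p_{ij}(\varphi(\alpha))$, and show that the set of $\alpha$ for which such an element exists is a $*$-subring. Indeed, applying $f$ to the three identities of Lemma~\ref{lem:latticepolynomials} — which hold for arbitrary $\alpha,\beta$ and invertible $\gamma$ (take $\gamma = 1$) — shows this set is closed under $\alpha \mapsto \alpha^*$, under $(\alpha,\beta)\mapsto -\alpha\beta$, and under $(\alpha,\beta)\mapsto \alpha+\beta$, and that on it $\varphi$ is $*$-preserving, additive, and sends $-\alpha\beta$ to $-\varphi(\alpha)\varphi(\beta)$. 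Since this set contains $U(A)$ and every element of a unital C*-algebra lies in the $*$-subring generated by its unitaries (self-adjoint elements of norm at most one are averages $\tfrac12(u+u^*)$, general self-adjoint elements are finite sums of these, and $i\cdot 1$ is itself unitary), the set is all of $A$. A normalisation ensuring $\varphi(1)=1$ then upgrades the product relation to genuine multiplicativity, exhibiting $\varphi$ as the required $*$-ring homomorphism satisfying the coordinate condition for all $\alpha$.

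The step I expect to be the main obstacle is the global coherence in part (a): the functions $\varphi_{ij}$ are only local data attached to each index pair, and assembling them into one function conjugated by a single diagonal $W$ requires the cocycle identity from the multiplication polynomial to be genuinely consistent across all pairs, while interacting correctly with the adjoint relation (which constrains the $w_i$). Managing the signs and the normalisation of $W$ so that everything matches for every pair $(i,j)$ at once is the delicate part; by comparison, once part (a) is in hand, part (b) is a fairly mechanical propagation of the coordinate condition from unitaries to the whole algebra along the three polynomial identities.
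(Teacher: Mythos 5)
Your proposal is correct and follows the paper's own argument in all essentials: both define $\varphi$ as the coordinate function of the reference pair $(1,2)$, build the diagonal $W$ from the coordinates of the $f(p_{1j}(1))$, propagate the coordinate condition across index pairs via the polynomials $P,Q,R$ of Lemma~\ref{lem:latticepolynomials} (your ``cocycle identities'' are exactly the relations the paper derives this way), and then extend from $U(A)$ to all of $A$ using the fact that unitaries generate the algebra together with the same three polynomial identities for additivity, multiplicativity, and the involution. The only slip is harmless: in a subring argument you cannot use averages $\tfrac12(u+u^*)$, since halving is not a ring operation, so decompose self-adjoint elements of norm at most $2$ as genuine sums $u+u^*$ (precisely the paper's device $\zeta_k+\zeta_k^*=\alpha_k/n$), which shows the additive group generated by the unitaries is already all of $A$.
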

\begin{proof}
  Abbreviate the coordinate condition as ($*$).
  By hypothesis, for all indices $j > 1$ there exist $\beta_j \in U(B)$
  such that $f(p_{1j}(1)) = p_{1j}(\beta_j)$. Define
  $W = \diag(1,\beta_2, \dots, \beta_n) \in U(B)$. Then
  $p_{1j}(\beta_j) = W^* p_{1j}(1) W$ for all $j$. Notice
  that conjugation by a diagonal unitary fixes all $e_{ii}$,
  and leaves the set $\{p_{ij}(\alpha)\}$ invariant as $\alpha$
  ranges over $U(A)$ (respectively, over $A$). Thus, replacing
  $f$ with the morphism $p \mapsto Wf(p)W^*$, we may assume
  that $f(p_{1j}(1)) = p_{1j}(1)$ for all $j > 1$, and prove
  that ($*$) holds in both~(a) and~(b) with $W = 1$.
  
  Towards (a), define $\varphi \colon U(A) \to U(B)$ by the condition
  $f(p_{12}(\alpha)) = p_{12}(\varphi(\alpha))$.
  In case $f(p_{1j}(\alpha)) = p_{1j}(\varphi(\alpha))$, for some
  $\alpha \in U(A)$ and distinct $i,j > 1$, it follows
  by applying Lemma~\ref{lem:latticepolynomials} that
  \begin{align*}
    f(p_{ij}(\alpha))
    & = f\big(Q\big( p_{i1}(-1), p_{1j}(\alpha), e_{ii}, e_{jj}\big)\big) \\
    & = f\big(Q\big( R\big( p_{1i}(1), e_{ii}, e_{11} \big),
    p_{1j}(\alpha), e_{ii}, e_{jj} \big) \\
    & = Q\big( R\big( p_{1i}(1), e_{ii}, e_{11} \big),
    p_{1j}(\varphi(\alpha)), e_{ii}, e_{jj} \big) 
    = p_{ij}(\varphi(\alpha)).
  \end{align*}
  In particular, because ($*$) is known to hold in case $i = 1$ and $\alpha = 1$,
  this shows that ($*$) in fact holds for $\alpha = 1$ and any distinct
  $i,j > 1$ (and, of course, when $i = 1$ and $j = 2$).
  Now since ($*$) for the case $\alpha=1$ and $j = 2$, and it holds
  by assumption for $i=1$, $j=2$ and all $\alpha \in U(A)$, then for
  $j > 2$ we find: 
  \begin{align*}
    f(p_{1j}(\alpha))
    & = f\big( Q\big( p_{12}(\alpha), R\big( p_{j2}(1), e_{22}, e_{jj}
    \big), e_{11}, e_{jj} \big) \big) \\
    & = Q\big( p_{12}(\varphi(\alpha)), R\big( p_{j2}(1), e_{22},
    e_{jj} \big), e_{11}, e_{jj} \big) \big) 
      = p_{1j}(\varphi(\alpha)).
  \end{align*}
  Thus the above shows that ($*$) holds for all $\alpha \in U(A)$
  and any $j  \geq 2$. 
  For the remaining case where $i > 1$ and $j = 1$, simply note
  that
  \begin{align*}
    f(p_{i1}(\alpha)) &= f(R\big (p_{1i}(-\alpha^*), e_{ii}, e_{11} \big)) \\
    &= R\big (p_{1i}(-\alpha^*), e_{ii}, e_{11} \big) = p_{i1}(\alpha).
  \end{align*}

  To prove part (b), we start by defining a function $\psi \colon A \to B$.
  Write $\alpha = \alpha_1 + i \alpha_2$ where each $\alpha_k$ is self-adjoint.
  Set
  $\zeta_k=\frac{\alpha_k}{2n} + i\sqrt{1-(\frac{\alpha_k}{2n})^2}$, 
  where $n$ is an integer satisfying $\|\alpha_k\| \leq 2n$ for $k = 1,2$. 
  Then $\zeta_k  \in U(A)$ satisfy $\zeta_k+\zeta_k^*=\frac{\alpha_k}{n}$. 
  Now, an application of Lemma~\ref{lem:latticepolynomials}(a) with $\gamma=1$
  shows $f(p_{ij}(\lambda_1+\lambda_2))=p_{ij}(\mu_1+\mu_2)$ if
  $f(p_{ij}(\lambda_l))=p_{ij}(\mu_l)$, and similarly for sums with more terms. 
  Therefore, in particular, 
  \begin{align*}
    f(p_{ij}(\alpha/n)) &= f(p_{ij}(\zeta_1+\zeta_1^*+i\zeta_2+i\zeta_2^*)) \\
    &= p_{ij}(\varphi(\zeta_1)+\varphi(\zeta_1^*)+\varphi(i\zeta_2)+\varphi(i\zeta_2^*)).
  \end{align*}
  Taking $\beta$ to be $n$ times the argument of $p_{ij}$ in the previous line, we have
  $\beta \in B$ with $f(p_{ij}(\alpha))=p_{ij}(\beta)$. Setting $\psi(\alpha)=\beta$
  yields $f(p_{ij}(\alpha))=p_{ij}(\psi(\alpha))$ for all $\alpha \in A$. 
  It follows that $p_{ij}(\psi(\alpha))=p_{ij}(\varphi(\alpha))$ for
  unitary $\alpha$, whence $\psi$ extends $\varphi$.

  Next we prove that $\psi$ is a $*$-ring homomorphism. First apply 
  Lemma~\ref{lem:latticepolynomials}(a) with $\gamma=1$ and use part (a)
  to deduce 
  \begin{align*}
    p_{ij}\big(\psi(\alpha)+\psi(\beta)\big) 
    & = P\big( p_{ij}(\psi(\alpha)), p_{ij}(\psi(\beta)), p_{ik}(\psi(1)), e_{ii}, e_{jj}, e_{kk} \big)\\
    & = P\big( f(p_{ij}(\alpha)), f(p_{ij}(\beta)), f(p_{ik}(1)), f(e_{ii}), f(e_{jj}), f(e_{kk}) \big)\\
    & = f\big(P\big(p_{ij}(\alpha), p_{ij}(\beta), p_{ik}(1), e_{ii}, e_{jj}, e_{kk}\big)\big) \\
    & = f(p_{ij}(\alpha+\beta)) = p_{ij}\big(\psi(\alpha+\beta)\big),
  \end{align*}
  and conclude that $\psi$ is additive. Hence also
  $\psi(0)=\psi(0+0)-\psi(0)=0$. 
  It similarly follows from Lemma~\ref{lem:latticepolynomials}(b) 
  that $\psi$ is multiplicative. 
  Finally, Lemma~\ref{lem:latticepolynomials}(c) shows that $\psi$
  preserves the involution. 
\end{proof}
 
The assumption that each $\zeta \in U(A)$ allows $\xi \in U(A)$
such that $f(p_{ij}(\zeta)) = p_{ij}(\xi)$ is slightly stronger than necessary
and is only used to shorten the proof above. With more work,
one may simply assume that this is the case when $i = 1$ and $j = 2$.

We are now ready to prove an AW*-analogue of Dye's
theorem~\cite[Theorem~1]{dye:projections}.

\begin{theorem}\label{thm:dye}
  Let $A$ be an AW*-algebra without type $\mathrm{I}_2$ summands,
  and $B$ any AW*-algebra. A $\cOML$-morphism $f
  \colon \Proj(A) \to \Proj(B)$ extends to a Jordan $*$-homomorphism 
  $A \to B$ if and only if $f(s_p q s_p) = s_{f(p)} f(q) s_{f(p)}$.
\end{theorem}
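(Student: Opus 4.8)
The plan is to prove the easy implication directly and to reduce the converse to the coordinate machinery developed above (Lemmas~\ref{lem:unitaryimage} and~\ref{lem:coordinatefunction}) after installing matrix coordinates on both $A$ and $B$. For the forward direction, suppose $f$ is the restriction to $\Proj(A)$ of a Jordan $*$-homomorphism $g \colon A \to B$. Such a $g$ preserves the triple product $g(aba)=g(a)g(b)g(a)$ and is unital and self-adjoint, so $g(s_p)=g(1-2p)=1-2f(p)=s_{f(p)}$. Since $s_pqs_p$ is again a projection, applying $g$ yields $f(s_pqs_p)=g(s_p)g(q)g(s_p)=s_{f(p)}f(q)s_{f(p)}$, as required. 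This direction uses neither completeness of the lattice nor the hypothesis on $\mathrm{I}_2$ summands.

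For the converse I would first decompose $A$ by central projections $z_\lambda$ according to type. The commutative ($\mathrm{I}_1$) summand is set aside for the end; every remaining summand is free of $\mathrm{I}_1$ and $\mathrm{I}_2$ parts and therefore admits, for some $n\geq 3$, a full system of matrix units $e_{ij}$ with $\sum_i e_{ii}=1$ (the finite type $\mathrm{I}$ part splits as a product of homogeneous algebras $\M_n(C)$ with $n\geq 3$, while the continuous and properly infinite parts allow $1$ to be split into three equivalent projections). Write such a summand as $A=\M_n(A')$, with $A'=e_{11}Ae_{11}$ an AW*-algebra. The task is now to transport this matrix structure across $f$. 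As a $\cOML$-morphism, $f$ sends the orthogonal family $\{e_{ii}\}$ to an orthogonal family of projections in $B$ summing to $f(1)=1$. Because conjugation by the symmetry $s_{p_{1j}(1)}$ swaps $e_{11}$ and $e_{jj}$ (as in the proof of Lemma~\ref{lem:unitaryimage}), the equivariance hypothesis forces $f(e_{jj})=s_{f(p_{1j}(1))}f(e_{11})s_{f(p_{1j}(1))}$, so the projections $f(e_{ii})$ are pairwise exchanged by symmetries and hence mutually equivalent in $B$. Out of these symmetries I would fabricate a genuine system of $n\times n$ matrix units in $B$ with diagonal $f(e_{ii})$, yielding an identification $B\cong\M_n(B')$ with $B'$ an AW*-algebra, under which $f(e_{ii})=e_{ii}$.

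With $f(e_{ii})=e_{ii}$ in place, Lemma~\ref{lem:unitaryimage} supplies for each $\zeta\in U(A')$ a unitary $\xi$ with $f(p_{ij}(\zeta))=p_{ij}(\xi)$, and Lemma~\ref{lem:coordinatefunction} then produces a diagonal unitary $W\in U(B)$ and a $*$-ring homomorphism $\psi\colon A'\to B'$ with $f(p_{ij}(\alpha))=W^*p_{ij}(\psi(\alpha))W$ for all $\alpha\in A'$. Thus $g_0=\mathrm{Ad}(W^*)\circ\M_n(\psi)\colon A\to B$ is a $*$-ring homomorphism agreeing with $f$ on the generators $p_{ij}(\alpha)$ of $\Proj(A)$. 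Passing to the Jordan $*$-homomorphism of Lemma~\ref{lem:jordan} (which coincides with $g_0$ on self-adjoints, hence on projections) gives a candidate extension $g$. To upgrade agreement on generators to $g|_{\Proj(A)}=f$, note that $g_0$ restricts to a lattice homomorphism on projections, so $g$ and $f$ agree on all vector projections, which by Lemma~\ref{lem:generatingprojections}(a) are finite lattice polynomials in the $p_{ij}(\alpha)$. Since every projection is an orthogonal supremum of vector projections (Lemma~\ref{lem:generatingprojections}(b)) and $f$ preserves such suprema, comparing $f$ and $g$ on the vector projections $\pi e_{11}$ for $\pi\in\Proj(A')$ shows that $\psi$ preserves orthogonal suprema; hence so does $g$, and therefore $g=f$ throughout $\Proj(A)$.

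The commutative summand is handled by Stone--Gelfand duality: there $f$ is a morphism of complete Boolean algebras into the (commutative, by preservation of commeasurability) subalgebra of $B$ generated by its image, which extends uniquely to a $*$-homomorphism via the equivalence between $\Proj$ and $\Func$. Finally I would glue the summandwise extensions $g_\lambda$: their ranges lie in the mutually orthogonal corners $f(z_\lambda)Bf(z_\lambda)$, so the orthogonal sum $g(a)=\sum_\lambda g_\lambda(z_\lambda a)$ converges in $B$ and defines a single Jordan $*$-homomorphism restricting to $f$. I expect the main obstacle to be the middle step, namely fabricating the matrix units in $B$ and verifying the reduction to $f(e_{ii})=e_{ii}$, since this is exactly where the equivariance hypothesis must be converted from a statement about symmetries into honest algebraic coordinates on which the coordinate lemmas can act; by contrast the convergence bookkeeping in the gluing and the supremum-preservation argument are comparatively routine.
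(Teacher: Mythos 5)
Your proposal follows the paper's proof in all essentials: the same easy direction (the paper writes $s_p q s_p$ explicitly in Jordan operations, you use the Jordan triple product, which amounts to the same), the same type decomposition into a commutative summand, homogeneous summands $\M_n(C)$ with $n \geq 3$, and a summand of the form $\M_3(D)$, the same transfer of the diagonal matrix units across $f$, and the same final appeal to Lemmas~\ref{lem:unitaryimage}, \ref{lem:coordinatefunction} and~\ref{lem:jordan}. Two local remarks: your derivation of the mutual equivalence of the $f(e_{ii})$ from the equivariance hypothesis (symmetries exchanging them) is a legitimate and arguably more direct alternative to the paper's common-complement argument via \cite[Theorem~6.6]{kaplansky:awstar}; and the ``fabrication'' of matrix units that you flag as the main obstacle is in fact routine, being exactly \cite[Proposition~16.1]{berberian}.

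The genuine gap is at the step you treat as easy. You assert that $g_0 = \mathrm{Ad}(W^*)\circ\M_n(\psi)$ ``restricts to a lattice homomorphism on projections'' and use this to conclude that $g_0$ and $f$ agree on all vector projections, from which you then deduce that $\psi$ preserves suprema. But at that stage $\psi$ is only known to be a $*$-ring homomorphism, and $*$-ring homomorphisms (even $*$-homomorphisms) between AW*-algebras need \emph{not} preserve joins of non-commuting projections: since $p \vee q = p + \RP[q(1-p)]$, preserving binary joins is essentially preserving $\RP$, which by Lemma~\ref{lem:awstarmorphisms} is tied to preserving suprema --- precisely the property of $\psi$ you are in the middle of establishing. (Concretely, an ultrafilter limit $\M_2(\ell^\infty) \to \M_2(\C)$ is a unital $*$-homomorphism between AW*-algebras that fails to preserve the join of two non-commuting projections.) Your argument is thus circular: the lattice-homomorphism property of $g_0$ is what you use to prove sup-preservation of $\psi$, yet it only becomes available once sup-preservation is known. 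The paper's proof is arranged exactly to avoid this: it never invokes any lattice-theoretic property of $\M_n(\varphi)$ at the outset, but instead evaluates $f$ on auxiliary projections built from $p_{12}(\pi)$ and $e_{22}$ by lattice operations, where the value of $f$ is forced by the coordinate condition together with the fact that $f$ \emph{alone} is a $\cOML$-morphism; this yields sup-preservation of $\varphi$, then of $\M_n(\varphi)$ --- itself a nontrivial inference, cited to \cite[Theorem~8.2]{heunenreyes:diagonal}, which you also assert without argument --- and only then the conclusion that $W^*\M_n(\varphi)W$ agrees with $f$ on the generators and hence on all of $\Proj(\M_n(A))$. Your proof is repairable by exactly this maneuver, but as written the bootstrap from agreement on the $p_{ij}(\alpha)$ to agreement on all projections does not go through.
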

\begin{proof}
  The ``only if'' direction follows because the expression to be
  preserved can be written in terms of Jordan operations:
  \begin{align*}
    s_p q s_p = (1-2p) q (1-2p) 
    &= q - 2 (pq+qp) + 4 pqp \\
    &= q - 2 (p \circ q) + 4 (pqp + p^{\perp} q p^{\perp}) \circ p \\
    &= q - 2 (p \circ q) + 4 (p+q-1)^2 \circ p.
  \end{align*}
  For the converse we first reduce the problem to the case 
  $A=\M_n(D)$ for $n \geq 3$ and AW*-algebra $D$. Indeed,
  \cite[Theorem~15.3]{berberian} and~\cite[Theorem~18.4]{berberian}
  provide unique orthogonal central projections
  $p_1,p_2,\ldots,p_\infty$ with sum $1$ such that $p_nA$ is of
  type~$\mathrm{I}_n$ for $n < \infty$ and $p_\infty A$ has no finite
  type~$\mathrm{I}$ summands. Then $A$ is the C*-sum of
  $p_iA$~\cite[Proposition~10.2]{berberian}, 
  and it suffices to consider one summand at a
  time because Jordan $*$-homomorphisms are closed under direct sums.
  By~\cite[Exercise~19.2]{berberian}, $p_\infty A \cong \M_3(D)$
  for some AW*-algebra $D$. For each finite $n$,
  by~\cite[Proposition~18.2]{berberian}, $p_nA \cong \M_n(C)$ for some
  commutative AW*-algebra $C$.
  By assumption $p_2=0$, leaving us with commutative AW*-algebras
  $p_1A$. But this case is taken care of by the
  duality~\eqref{eq:commutativeduality}, since morphisms in $\cAWstar$
  are definitely Jordan $*$-homomorphisms.
  Thus we may replace $A$ with $\M_n(A)$ for $n \geq 3$.

  Next, we make another reduction (replicating the proof
  of~\cite[Theorem~1]{dye:projections}) to show that we may also
  replace $B$ with $\M_n(B)$. Any two distinct diagonal matrix units
  $e_{ii}$ in $\M_n(A)$ have a common complement, so the same is true
  for their images under
  $f$. By~\cite[Theorem~6.6]{kaplansky:awstar} this means 
  that their images $f(e_{ii})$ are equivalent
  projections. These $n$ orthogonal equivalent projections sum
  to $1$, so by~\cite[Proposition~16.1]{berberian} they form the diagonal
  projections of a set of $n$-by-$n$ matrix units in $B$. Thus we may
  replace $B$ by $\M_n(B)$ and assume that $f(e_{ii}) = e_{ii}$. 

  So we are assuming that $A$ and $B$ are AW*-algebras with an
  $\cOML$-morphism $f \colon \Proj(\M_n(A)) \to \Proj(\M_n(B))$ for
  $n \geq 3$. 
  Combining Lemmas~\ref{lem:unitaryimage}
  and~\ref{lem:coordinatefunction} produces a $*$-ring homomorphism
  $\varphi \colon A \to B$ and a diagonal $W \in U(M_n(B))$ such that 
  $f(p_{ij}(\alpha))=W^* p_{ij}(\varphi(\alpha)) W$ for all $\alpha
  \in A$ and all distinct $i,j$.
  It follows from the definition of $p_{ij}$ that
  $f(p_{ij}(\alpha))=W^* \big(\M_n\varphi(p_{ij}(\alpha))\big) W$ for all
  $i,j$ and $\alpha \in A$.

  Next we show that $\varphi$ preserves suprema of projections, using an
  auxiliary function $\pi \mapsto p_{12}(\pi) \wedge e_{22}$.  
  It is a well-defined morphism $j_A \colon \Proj(A) \to \Proj(M_n(A))$ of
  complete orthomodular lattices that is injective. Hence
  \begin{align*}
    j_B(\bigvee_i \varphi(\pi_i))
    & = \bigvee_i p_{12}(\varphi(\pi_i)) \wedge e_{22}  
    = \bigvee_i W f(j_A(\pi_i)) W^* \\
    & = W f(j_A(\bigvee_i \pi_i)) W^* 
    = p_{12}(\varphi(\bigvee_i \pi_i)) \wedge e_{22} 
    = j_B(\varphi(\bigvee_i \pi_i)),
  \end{align*}
  and so $\bigvee_i \varphi(\pi_i) = \varphi(\bigvee_i \pi_i)$ by
  injectivity of $j_B$.

  Consequently, the $*$-ring homomorphism $\M_n(\varphi) \colon
  \M_n(A) \to \M_n(B)$ also preserves suprema of projections
  by~\cite[Theorem~8.2 and 
  Remark~8.3]{heunenreyes:diagonal}.  
  Hence so does its conjugation with $W$.
  Now Lemma~\ref{lem:generatingprojections} guarantees that
  $W^* \M_n(\varphi) W$ equals $f$ on all of $\Proj(\M_n(A))$. 
  The proof is concluded by an application of Lemma~\ref{lem:jordan}.
\end{proof}

\begin{remark}
  It remains an open question whether every morphism of complete orthomodular
  lattices $\Proj(A) \to \Proj(B)$ extends to a Jordan $*$-homomorphism
  $A \to B$ when $A$ and $B$ are AW*-algebras and $A$ has no type 
  $\mathrm{I}_2$ summands. This is known to be the case when $A$ and $B$
  are W*-algebras~\cite[Corollary~1]{buncewright:jordan}. Our proof
  suffices to answer this question for AW*-algebras if
  Lemma~\ref{lem:unitaryimage} holds more generally without the
  equivariance assumption. 

  The analogous generalization of Lemma~\ref{lem:unitaryimage} is known
  to hold over a von Neumann regular ring $R$, i.e.\  a ring such that every
  $a \in R$ admits $b \in R$ with $a=aba$. In this setting, denote by
  $q_{ij}(\alpha)$ the idempotent in $\M_n(R)$ whose row range is the
  submodule of $R^n$ generated by the row vector with $i$th entry $1$ and
  $j$th entry $\alpha$. Then the $q_{ij}(\alpha)$ for invertible $\alpha$
  are characterised in lattice-theoretic terms as those projections $p$ that
  complement both $e_{ii}$ and $e_{jj}$, i.e.\ $p \wedge e_{ii}=0
  =p \wedge e_{jj}$ and $p \vee e_{ii} = e_{ii}+e_{jj} = p\vee e_{jj}$
  (see Part~II, Chapter~III, Lemma~3.4 of~\cite{vonneumann:continuous}).

  Unfortunately, this characterisation does not hold for a general AW*-algebra $A$.
  To see the difficulty, let $\alpha \in A$ be neither a left nor a right zerodivisor, but also not invertible. 
  Considering $A^2$ as a left $\M_2(A)$-module, $p=p_{21}(\alpha)$ is a projection 
  with range $A \begin{pmatrix} \alpha & 1 \end{pmatrix}$. Since $\alpha$ is not a left 
  zerodivisor, $A \begin{pmatrix} \alpha & 1 \end{pmatrix} \cap A \begin{pmatrix} 0 & 1 \end{pmatrix} = 0$,
  whence $\range(p \wedge e_{22})=\range(p) \cap \range(e_{22})=0$, so 
  $p \wedge e_{22}=0$. Similarly $p \wedge e_{11}=0$. Furthermore, $p^\perp$ has range 
  $A \begin{pmatrix} 1 & -\alpha^* \end{pmatrix}$, which has zero intersection with
  $A \begin{pmatrix} 1 & 0 \end{pmatrix}$ because $\alpha$ is not a right zerodivisor, 
  so that $p^\perp \wedge e_{11}=0$, which $(-)^\perp$ sends to $p \vee e_{22} = 1$. 
  Also $p \vee e_{11}=1$, so $p$ complements both $e_{11}$ and $e_{22}$.
  However, because $\alpha$ is not invertible, it cannot be of the form 
  $p=p_{12}(\beta)$~\cite[Lemma~3(ii)]{dye:projections}. 
\end{remark}

\begin{lemma}\label{lem:fullness:typeone}
  Let $A$ and $B$ be AW*-algebras.  If $f \colon \ActiveProj(A) \to
  \ActiveProj(B)$ is a morphism in $\EAWstar$ such that $f$ extends to a
  continuous $\C$-linear function $g \colon A \to B$, then 
  $g$ is a morphism of AW*-algebras satisfying $\ActiveProj(g)=f$.
\end{lemma}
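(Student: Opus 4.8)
The plan is to check that $g$ is a $*$-homomorphism preserving suprema of projections, \ie\ a morphism of $\AWstar$; the identity $\ActiveProj(g)=f$ is then immediate, since $\ActiveProj$ sends a morphism to its restriction to projections and $g$ extends $f$. Preservation of suprema of projections comes for free as well: the restriction $g|_{\Proj(A)}$ equals $f|_{\Proj(A)}$, which by hypothesis is a morphism of $\cOML$ and hence preserves arbitrary suprema (recall that, by the remark after Lemma~\ref{lem:awstarmorphisms}, this suffices once $g$ is a $*$-ring homomorphism). So the only substance is to show that $g$ preserves the involution and is multiplicative, $\C$-linearity being assumed.

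For the involution I would argue as follows. Since $f$ is a morphism of piecewise AW*-algebras it satisfies $f(a^*)=f(a)^*$, so for self-adjoint $a$ we get $g(a)=f(a)=f(a)^*=g(a)^*$; that is, $g$ carries self-adjoint elements to self-adjoint elements. Writing an arbitrary $x\in A$ as $x=a+ib$ with $a,b$ self-adjoint and using $\C$-linearity of $g$ then yields $g(x)^*=g(a)-ig(b)=g(x^*)$, as required.

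The heart of the matter—and the step I expect to be the main obstacle—is multiplicativity. The key observation is that the group-homomorphism clause in the definition of an $\EAWstar$-morphism already forces it, by a soft bilinearity argument that avoids any appeal to the structure theory of Jordan $*$-homomorphisms. Consider $B(x,y)=g(xy)-g(x)g(y)$. As $g$ is $\C$-linear and multiplication is norm-continuous, $B$ is bilinear and separately continuous. Because $\Sym(A)\subseteq U(A)\subseteq N(A)$ and $g|_{N(A)}=f$ restricts to a group homomorphism $\Sym(A)\to\Sym(B)$, we have $g(uv)=f(uv)=f(u)f(v)=g(u)g(v)$ for all $u,v\in\Sym(A)$, so $B$ vanishes on $\Sym(A)\times\Sym(A)$. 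Now $\Sym(A)$ contains the unit and every symmetry $1-2p$, hence the linear span of $\Sym(A)$ contains every projection $p=\tfrac12\bigl(1-(1-2p)\bigr)$; since $A$ is the closed linear span of its projections (Definition~\ref{def:awstar}(d), as already used in Lemma~\ref{lem:faithful}), the closed linear span of $\Sym(A)$ is all of $A$. Fixing $v\in\Sym(A)$, the continuous linear map $x\mapsto B(x,v)$ vanishes on $\Sym(A)$, hence on its dense linear span, hence on $A$; fixing then $x\in A$, the continuous linear map $y\mapsto B(x,y)$ vanishes on $\Sym(A)$ and hence on $A$ for the same reason. Therefore $B\equiv 0$ and $g$ is multiplicative.

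The real delicacy, then, is not any hard computation but recognizing that the orthomodular action and the Dye-type extension results have done their work \emph{upstream}, in producing the linear map $g$ in the first place; within this lemma nothing remains but the formal propagation of multiplicativity from the generating group $\Sym(A)$ to all of $A$, together with the elementary checks for the involution and for suprema. The one hypothesis to handle with care is the density of the span of $\Sym(A)$, which rests squarely on the defining AW*-property that (maximal) commutative subalgebras are closed linear spans of their projections.
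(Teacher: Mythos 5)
Your proposal is correct and follows essentially the same route as the paper: both arguments derive multiplicativity of $g$ from the group-homomorphism clause on $\Sym(A)$ combined with bilinearity, continuity, and the fact that $A$ is the closed linear span of its projections, and then handle the involution and suprema of projections by elementary checks. The only cosmetic difference is that the paper first reduces the bilinear identity to pairs of projections and then expands $f((1-2p)(1-2q))$, whereas you let the bilinear form $B$ vanish on $\Sym(A)\times\Sym(A)$ directly and invoke density of the span of $\Sym(A)$.
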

\begin{proof}
  We first show that $g(a)g(b) = g(ab)$ for all $a, b \in A$. 
  Because the functions $A \times A \to A$ on each side of the
  equation above are continuous and bilinear, and 
  because $A$ is the closed linear span of its projections,     
  it suffices to consider the case where $a$ and $b$ are
  projections. 
  Now, for $p,q \in \Proj(A)$,
  \begin{align*}
    1-2g(p)-2g(q)+4g(pq)
    & = g(1-2p-2q+4pq) \\
    & = f((1-2p)(1-2q)) \\
    & = f(1-2p) f(1-2q) \\
    & = (1-2f(p))(1-2f(q)) \\
    & = 1-2g(p)-2g(q)+4g(p)g(q),
  \end{align*}
  and therefore $g(pq)=g(p)g(q)$ as desired. The above equations used,
  respectively: linearity of $g$; $g$ extends $f$; $f$ is a group
  homomorphism on $\Sym(A)$; $f$ is a piecewise algebra morphism; $g$
  extends $f$.

  So $g$ is an algebra homomorphism, and it is readily seen to be a
  $*$-homomorphism using linearity and the fact that it equals $f$ on
  normal elements. Because $f$ preserves suprema of
  projections and $g$ extends it, we see that $g$ is a morphism in
  \AWstar, which obviously satisfies $\ActiveProj(g) = f$.
\end{proof}

\begin{corollary}\label{cor:fullness:typeone}
  Let $A$ and $B$ be AW*-algebras, and $f \colon
  \ActiveProj(A) \to \ActiveProj(B)$ a morphism of $\EAWstar$. If $A$
  has no type $\mathrm{I}_2$ summand, $f$ is in the
  image of $\ActiveProj$. 
\end{corollary}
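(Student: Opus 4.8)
The plan is to reduce the statement to the AW*-version of Dye's theorem (Theorem~\ref{thm:dye}) together with Lemma~\ref{lem:fullness:typeone}, which between them have already done all the analytic work. By definition of $\EAWstar$, the morphism $f \colon \ActiveProj(A) \to \ActiveProj(B)$ restricts to a $\cOML$-morphism $\Proj(A) \to \Proj(B)$, which I would denote again by $f$. The first task is to extract from the $\EAWstar$-structure of $f$ precisely the equivariance hypothesis $f(s_p q s_p) = s_{f(p)} f(q) s_{f(p)}$ demanded by Theorem~\ref{thm:dye}.

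To this end, recall that the action carried by $\ActiveProj(A)$ is conjugation, so for a projection $q$ and a symmetry $s_p = 1-2p$ one has $s_p \cdot q = s_p q s_p$ (using $s_p^* = s_p = s_p^{-1}$), which is again a projection. Since $f$ is a morphism of piecewise AW*-algebras and $1$ is commeasurable with $p$, it is linear on this commeasurable pair, giving $f(s_p) = f(1-2p) = 1 - 2f(p) = s_{f(p)}$. The equivariance condition $f(s_p \cdot q) = f(s_p) \cdot' f(q)$ then reads exactly $f(s_p q s_p) = s_{f(p)} f(q) s_{f(p)}$, as required.

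Because $A$ has no type $\mathrm{I}_2$ summand, Theorem~\ref{thm:dye} now produces a Jordan $*$-homomorphism $g \colon A \to B$ agreeing with $f$ on $\Proj(A)$; as a Jordan $*$-homomorphism between C*-algebras, $g$ is automatically continuous and $\C$-linear. I would then upgrade this agreement from projections to all normal elements: any normal $a$ lies in a commutative AW*-subalgebra $C$, on which both $f$ and $g$ are continuous and $\C$-linear and coincide on $\Proj(C)$. Since $C$ is the closed linear span of $\Proj(C)$, they coincide on all of $C$, hence on $N(A)$. Thus $g$ is a continuous $\C$-linear function extending $f$.

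The proof then concludes by invoking Lemma~\ref{lem:fullness:typeone}, which turns this extension $g$ into a genuine morphism of $\AWstar$ satisfying $\ActiveProj(g) = f$, so that $f$ lies in the image of $\ActiveProj$. The only genuinely delicate point, and where I expect to take the most care, is the passage from agreement on projections to agreement on all of $N(A)$, since it is precisely this that allows $g$ to be fed into Lemma~\ref{lem:fullness:typeone}; everything else is a direct assembly of the preceding results.
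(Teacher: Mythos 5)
Your proof is correct and follows essentially the same route as the paper: apply Theorem~\ref{thm:dye} to obtain a Jordan $*$-homomorphism $g$, show that $g$ agrees with $f$ on $N(A)$, and conclude with Lemma~\ref{lem:fullness:typeone}. The only difference is that you spell out two steps the paper leaves implicit --- the extraction of the equivariance hypothesis $f(s_p q s_p) = s_{f(p)} f(q) s_{f(p)}$ from the $\EAWstar$-morphism structure, and the passage from agreement on $\Proj(A)$ to agreement on all of $N(A)$ by restricting to commutative AW*-subalgebras --- and both are handled correctly.
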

\begin{proof}
  Theorem~\ref{thm:dye} extends $f \colon \Proj(A) \to \Proj(B)$ 
  to a Jordan $*$-homomorphism $g \colon A \to B$, which is
  continuous~\cite[Page~439]{stoermer:jordan}. Because $A$ is the
  closed linear span of $\Proj(A)$, 
  in fact $f$ and $g$ coincide as functions $N(A) \to N(B)$.
  Hence the result follows from Lemma~\ref{lem:fullness:typeone}.
\end{proof}

\subsection*{Type $I_2$ algebras}

Next we focus on algebras of type $\mathrm{I}_2$. As in
Lemma~\ref{lem:sym:typeone}, we will use the fact that determinants
and traces are well-defined for matrices with entries in a commutative ring.

\begin{proposition}\label{prop:fullness:typeonetwo}
  Let $A$ and $B$ be AW*-algebras, and $f \colon \ActiveProj(A) \to
  \ActiveProj(B)$ a morphism of $\EAWstar$. If $A$ is type
  $\mathrm{I}_2$, then $f$ is in the image of
  $\ActiveProj$.
\end{proposition}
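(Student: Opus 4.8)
The plan is to handle the type $\mathrm{I}_2$ case by explicit coordinates, since here $A = \M_2(C)$ for a commutative AW*-algebra $C$, and Theorem~\ref{thm:dye} is unavailable (its proof needed $n \geq 3$ to invoke the lattice polynomials of Lemma~\ref{lem:latticepolynomials}). By Lemma~\ref{lem:sym:typeone}(d), $\Sym(A) = \{u \in U(A) \mid \det(u)^2 = 1\}$, so the symmetry group carries genuine information beyond the projection lattice: the preceding example shows that an equivariant pair $(f,g)$ of an $\cOML$-morphism and a group homomorphism need \emph{not} come from an algebra map, precisely because of the determinant. The key point is therefore that in $\EAWstar$ we are handed a \emph{piecewise algebra} morphism, so $f$ is already defined and additive/multiplicative on all commuting elements of $N(A)$, not merely on projections; the determinantal pathology of the example is ruled out by the piecewise-linearity that an $\EAWstar$-morphism must respect.

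First I would establish that $B$ may be taken of type $\mathrm{I}_2$ as well. The images $f(e_{11}), f(e_{22})$ of the diagonal matrix units are orthogonal projections summing to $f(1) = 1$; as in the proof of Theorem~\ref{thm:dye}, since $e_{11}$ and $e_{22}$ have a common complement (namely $p_{12}(1)$), their images are equivalent projections via~\cite[Theorem~6.6]{kaplansky:awstar}, and by~\cite[Proposition~16.1]{berberian} they furnish a system of $2$-by-$2$ matrix units in the corner $f(1)Bf(1)$. Cutting down to this corner and conjugating, I may assume $f(e_{ii}) = e_{ii}$ and write $B = \M_2(D)$ for a commutative AW*-algebra $D = e_{11}Be_{11}$. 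Lemma~\ref{lem:unitaryimage} applies verbatim (it does not require $n \geq 3$) to produce, for each unitary $\zeta \in U(C)$, a unique $\xi \in U(D)$ with $f(p_{12}(\zeta)) = p_{12}(\xi)$, giving a coordinate map $\varphi_0 \colon U(C) \to U(D)$.

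The heart of the matter is to upgrade this coordinate assignment to a genuine $*$-ring homomorphism $\varphi \colon C \to D$ \emph{without} the lattice polynomials $P,Q,R$ of Lemma~\ref{lem:latticepolynomials}, which are unavailable for $n = 2$. Here I would exploit the extra structure: $f$ is an $\EAWstar$-morphism, hence is the restriction of a piecewise AW*-algebra morphism $N(A) \to N(B)$, and in particular restricts to a morphism of the commutative AW*-algebras sitting diagonally, $C \cong e_{11}Ae_{11} \to e_{11}Be_{11} \cong D$; call this $\varphi$. By the commutative duality~\eqref{eq:commutativeduality} (equivalently Theorem~\ref{thm:piecewiseequivalence} in the Boolean case), $\varphi$ is automatically a $*$-homomorphism. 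The crucial compatibility to verify is that this $\varphi$ reproduces the coordinate map, i.e.\ $f(p_{12}(\alpha)) = p_{12}(\varphi(\alpha))$ for all $\alpha \in C$, and that $f$ agrees with $\M_2(\varphi)$ on \emph{all} vector projections. For this I would use Lemma~\ref{lem:generatingprojections}: the $p_{ij}(\alpha)$ together generate $\Proj(\M_2(C))$ as a complete orthomodular lattice, and since $f$ preserves suprema and orthocomplements, it suffices to pin down $f$ on the generators $p_{12}(\alpha)$ and $p_{21}(\alpha)$. The matching of coordinates on these generators is forced by the equivariance condition together with the piecewise-multiplicativity of $f$ on the commutative subalgebra generated by a single $p_{12}(\alpha)$ and the diagonal units.

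Once $f = W^*\M_2(\varphi)(-)W$ on all of $\Proj(\M_2(C))$ is established for the $*$-ring homomorphism $\varphi$ and a diagonal unitary $W$ (with $W = 1$ after the normalization above), the conclusion follows exactly as in Corollary~\ref{cor:fullness:typeone}: $\M_2(\varphi)$ preserves suprema of projections by~\cite[Theorem~8.2 and Remark~8.3]{heunenreyes:diagonal}, so $g = \M_2(\varphi)$ is a morphism of AW*-algebras extending $f$ on projections, and Lemma~\ref{lem:fullness:typeone} then promotes the equivariant agreement on $\Sym(A)$ to the equality $\ActiveProj(g) = f$, placing $f$ in the image of $\ActiveProj$. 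The main obstacle I anticipate is the coordinate-matching step: without Lemma~\ref{lem:latticepolynomials} one cannot transport additivity and multiplicativity of $\varphi$ through purely lattice-theoretic identities, so the argument must instead extract the ring structure of $\varphi$ from the diagonal commutative subalgebra via duality and then check—by hand, using equivariance under the symmetries $1 - 2p_{12}(\alpha)$ and the piecewise operations—that this externally-defined $\varphi$ is the \emph{same} map that governs $f$ on the off-diagonal generators.
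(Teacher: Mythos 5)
Your high-level strategy is genuinely close to the paper's proof in its key insight: an $\EAWstar$-morphism is already a $*$-homomorphism on commutative subalgebras, so the coordinate map $\varphi$ on $C$ comes for free (no lattice polynomials needed), and the matrix units in $B$ come from equivariance applied to $f(e_{11})$ and $f(e_{12}+e_{21})$; after that, Lemma~\ref{lem:fullness:typeone} finishes. Two corrections to your set-up, though: the paper never decomposes $B$ at all---it builds a self-adjoint system of $2\times 2$ matrix units $e'_{ij}$ inside $B$ directly and works in their $f(C)$-span---and your claim that $D = e_{11}Be_{11}$ is a \emph{commutative} AW*-algebra is false in general, since $B$ is an arbitrary AW*-algebra; fortunately nothing essential depends on it.

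The genuine gap is exactly the step you defer to the end: matching coordinates on $p_{12}(\alpha)$ for \emph{arbitrary} $\alpha \in C$, equivalently showing that $f$ agrees with the constructed homomorphism on all projections. Your stated justification appeals to ``the commutative subalgebra generated by a single $p_{12}(\alpha)$ and the diagonal units'', but no such subalgebra exists: $p_{12}(\alpha)$ commutes with neither $e_{11}$ nor $e_{22}$ unless $\alpha = 0$. For \emph{unitary} $\zeta$ the matching is indeed forced---write $1-2p_{12}(\zeta) = \diag(-\zeta,-\zeta^*)\,(e_{12}+e_{21})$, a product of two elements of $\Sym(A)$ by Lemma~\ref{lem:sym:typeone}(d), and use the group-homomorphism property together with the known values of $f$ on the diagonal copy of $C \oplus C$---but unitary coordinates do not generate $\Proj(\M_2(C))$, and the device that would transport sums of coordinates, Lemma~\ref{lem:latticepolynomials}(a), requires a third index and hence $n \geq 3$; this is precisely why type $\mathrm{I}_2$ needs its own proof. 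What is missing is the paper's central computation: reduce to projections $q$ with $\det(q)=0$ via the adjugate (the complementary central part $\det(q)1$ is handled by $\varphi$ directly); write such $q$ in terms of its trace $\tau$ and entries $\alpha,\beta,\zeta$, replace $\zeta$ by a unitary and choose $\xi \in U(C)$ with $i\xi^2 = \zeta$ using square roots~\cite[Corollary~2.3]{deckardpearcy:diagonal}; and factor $1-2q$ explicitly as a product of \emph{four} elements of $\Sym(A)$, each of which is a sum of commuting terms whose $f$-images are determined by piecewise linearity. Only then does multiplicativity of $f$ on $\Sym(A)$ force $f(q)=g(q)$. Without this factorization (or a substitute for it), nothing in your argument pins down $f$ on the non-unitary generators, so the proposal as written does not close.
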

\begin{proof}
  Let $C$ be a commutative AW*-algebra with
  $A=\M_2(C)$; this exists by~\cite[Proposition~18.2]{berberian}.   
  The algebra $C$ is embedded in $A=\M_2(C)$ by $\gamma \mapsto \diag(\gamma,\gamma)$.
  Fix $p = e_{11} \in \Proj(A)$ and $u = e_{12} + e_{21} \in \Sym(A)$. 
  Since $upu=p^{\perp}$, we deduce
  \begin{align*}
    f(u) f(p) f(u) &= f(p)^{\perp}, \\
    f(u) f(p) &= f(p)^{\perp} f(u), \\
    f(p) f(u) &= f(u) f(p)^{\perp}. 
  \end{align*}
  It follows that $e'_{11} = f(p)$, $e'_{12} = f(p) f(u)$, $e'_{21} = f(u) f(p)$,
  and $e'_{22} = f(p)^{\perp}$ form a self-adjoint set of 2-by-2
  matrix units in $B$ (see~\cite[Page~429]{kadisonringrose}).
  The image $D = f(C) \subseteq B$ is a commutative $*$-subalgebra
  centralizing all $e'_{ij}$. Letting $V \subseteq B$ be the $D$-span
  of the $e'_{ij}$, it follows that $V$ is a $*$-subalgebra of $B$ isomorphic to $\M_2(D)$.
  Define a $C$-linear function $g \colon A \to V \subseteq B$ by $g(e_{ij}) = e'_{ij}$
  and $g(\gamma)=f(\gamma)$ for $\gamma \in C$; it is a $*$-homomorphism.

  Next we will prove that $g$ equals $f$ on all $q \in \Proj(A)$. Notice
  that $\det(q)$ is a projection in $C$. 
  Using properties of the adjugate matrix~\cite[XIII.4.16]{Lang} we find
  $\det(q) 1_A = \adj(q) q = \adj(q) q^2 = \det(q) q$, and so
  $\det(q) 1_A \leq q$ in $\Proj(A)$. 
  Because $\adj \colon \M_2(C) \to \M_2(C)$
  is $C$-linear, the projection $q' = q - \det(q)1_A$ has
  determinant
  \[
    \det(q') = \adj(q') q' = (\adj(q) - \det(q)1_A)(q - \det(q)1_A) = 0.
  \]
  So without loss of generality we may assume $\det(q) = 0$.
  In this case one can compute that $\tau = \tr(q)$ is a projection of $C$.
  As any projection in $A$ with trace $\tau$ and determinant
  zero, $q$ can be written (in standard matrix units $e_{ij}$) in the form
  \[
  q=\frac{1}{2}
  \begin{pmatrix}
    \tau + \alpha & \zeta \beta \\
    \zeta^* \beta & \tau - \alpha
  \end{pmatrix}
  \]
  where $\alpha, \beta \in C$ are self-adjoint, satisfy $\alpha^2 + \beta^2 = \tau$, and
  $\zeta \in C$ is a partial isometry with $\zeta \zeta^* \beta = \beta$.
  Replacing $\zeta$ with $\zeta + (1- \zeta \zeta^*)$ if necessary,
  we may in fact assume $\zeta \in U(C)$.
  Because the algebra $C$ has square
  roots~\cite[Corollary~2.3]{deckardpearcy:diagonal}, 
  there exists $\xi \in
  U(C)$ such that $i\xi^2 = \zeta$.
  From $\alpha^2 + \beta^2 = \tau$ one deduces that $\tau$ supports
  $\alpha$ and $\beta$, 
  so $\tau^{\perp}$ annihilates $\alpha$
  and $\beta$. Then
  \begin{align}
    1 - 2q &= 
    \begin{pmatrix}
      \tau^{\perp}-\alpha & -\zeta \beta \\
      -\zeta^* \beta & \tau^{\perp}+\alpha
    \end{pmatrix} \tag{$*$} \\
    &=
    \begin{pmatrix}
      \tau - \tau^{\perp} & 0 \\
      0 & 1
    \end{pmatrix}
    \begin{pmatrix}
      -\tau^{\perp}-\alpha & -i \xi^2 \beta \\
      i(\xi^*)^2 \beta & \tau^{\perp}+\alpha
    \end{pmatrix} \notag \\
    &=
    \begin{pmatrix}
      \tau - \tau^{\perp} & 0 \\
      0 & 1
    \end{pmatrix}
    \begin{pmatrix}
      -\xi & 0 \\
      0 & \xi^*
    \end{pmatrix}
    \begin{pmatrix}
      \tau^{\perp} + \alpha & i\beta \\
      i\beta & \tau^{\perp} + \alpha
    \end{pmatrix}
    \begin{pmatrix}
      \xi^* & 0 \\
      0 & \xi
    \end{pmatrix} \notag \\ \notag
    &= \big( (\tau - \tau^{\perp})p + p^{\perp} \big) 
          \big( -\xi p + \xi^* p^{\perp} \big) 
          \big( (\tau^{\perp}+\alpha)1 + i\beta u \big)
          \big( \xi^* p + \xi p^{\perp} \big). 
  \end{align}
  The four factors in the right hand side are elements of $\Sym(A)$ by
  Lemma~\ref{lem:sym:typeone}(d). Because $f$ is piecewise linear and is multiplicative
  when restricted to $\Sym(\M_2(C))$, applying $f$ to~($*$) and invoking
  piecewise linearity gives
  \begin{align*}
    1-2f(q)
    & = f\big( (\tau - \tau^{\perp})p + p^{\perp} \big)
           f\big( -\xi p + \xi^* p^\perp \big) 
           f\big( (\tau^{\perp}+\alpha)1+i\beta u \big) 
           f\big( \xi^* p + \xi p^\perp \big) \\
    & = (\tau^{\perp}-\alpha) f(p)  
           - \zeta \beta f(p)f(u)  
           - \zeta^* \beta f(u)f(p) 
           + (\tau^{\perp}+\alpha) f(p)^\perp \\
    & = (\tau^{\perp}-\alpha) g(e_{11})  
           - \zeta \beta g(e_{12})  
           - \zeta^* \beta g(e_{21}) 
           + (\tau^{\perp}+\alpha) g(e_{22}) \\
    & = g(1-2q) 
       = 1 - 2g(q),
  \end{align*}
  whence $f(q) = g(q)$. 
  Finally, because $*$-homomorphisms are continuous, an application
  of Lemma~\ref{lem:fullness:typeone} finishes the proof.
\end{proof}

\subsection*{Fullness of the functor and some open problems}

We summarize by showing that $\ActiveProj
\colon \AWstar \to \AL$ is a full functor.

\begin{theorem}\label{thm:fullness}
  If $A$ and $B$ are AW*-algebras, and $f \colon \ActiveProj(A) \to
  \ActiveProj(B)$ is a morphism in $\AL$, then $f=\ActiveProj(g)$ for
  some $g \colon A \to B$ in $\AWstar$.
\end{theorem}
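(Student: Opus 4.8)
The plan is to split $A$ into its type $\mathrm{I}_2$ part and the rest, handle each with the two results already proved, and glue. By the type decomposition of AW*-algebras there is a unique central projection $z \in \Proj(A)$ such that $zA$ is of type $\mathrm{I}_2$ (possibly zero) and $(1-z)A$ has no type $\mathrm{I}_2$ summand. First I would pass through Proposition~\ref{prop:activelatticesequivalence} to regard $f$ as a morphism in $\EAWstar$, that is, as an honest function $N(A) \to N(B)$ that is a morphism of piecewise AW*-algebras, restricts to a $\cOML$-morphism on $\Proj(A)$ and a group homomorphism on $\Sym(A)$, and is equivariant. Throughout I write $e = f(z)$, so that $e^\perp = f(1-z)$ since $f$ preserves orthocomplements.

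The central step is to restrict $f$ to the two corners $eBe$ and $e^\perp B e^\perp$, which are themselves AW*-algebras with $\Proj(eBe) = [0,e]$ and relative orthocomplement $q \mapsto e - q$. Because $z$ is central, every $a \in zA$ satisfies $a = zaz$, so $f(a) = f(z)f(a)f(z) \in eBe$; thus $f$ carries $N(zA)$ into $N(eBe)$, and it respects the relative orthocomplement since $f(z-p) = e - f(p)$ for $p \leq z$. I would therefore define $f_z$ as the restriction of $f$ to $N(zA)$ and claim it is a morphism $\ActiveProj(zA) \to \ActiveProj(eBe)$ in $\EAWstar$, with the symmetric construction giving $f_{1-z} \colon \ActiveProj((1-z)A) \to \ActiveProj(e^\perp B e^\perp)$.

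The main obstacle is that the symmetry structure of a corner is not obviously respected: a relative symmetry $z - 2p$ with $p \leq z$ is only a partial isometry in $A$, so the group-homomorphism property of $f_z$ on $\Sym(zA)$ cannot be read off from piecewise multiplicativity, as elements of $\Sym(zA)$ need not commute. I would resolve this via the group homomorphism $\iota \colon \Sym(zA) \to \Sym(A)$, $u \mapsto u + (1-z)$, which sends the relative symmetry $z-2p$ to the genuine symmetry $1-2p$. Since $\iota(u)$ agrees with $u$ on the $z$-corner and is the identity off it, piecewise additivity gives $f(\iota(u)) = f(u) + e^\perp$, so the $e$-corner of $f(\iota(u))$ is $f(u)$; and as $f(\iota(u))$ is a symmetry of $B$, its corner satisfies $f(u)^2 = e$ and $f(u)^* = f(u)$, whence $f(u) \in \Sym(eBe)$. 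The group-homomorphism property on $\Sym(zA)$ then descends from the global one: applying $f$ to $\iota(uu') = \iota(u)\iota(u')$ and compressing to the $e$-corner yields $f(uu') = f(u)f(u')$. Equivariance compresses likewise, using $\iota(u) \cdot a = u a u^*$ for $a \in zA$.

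With $f_z$ and $f_{1-z}$ established as morphisms in $\EAWstar$, Proposition~\ref{prop:fullness:typeonetwo} applies to the type $\mathrm{I}_2$ algebra $zA$ to produce $g_z \colon zA \to eBe$ in $\AWstar$ with $\ActiveProj(g_z) = f_z$, and Corollary~\ref{cor:fullness:typeone} applies to $(1-z)A$ to produce $g_{1-z} \colon (1-z)A \to e^\perp B e^\perp$ with $\ActiveProj(g_{1-z}) = f_{1-z}$. As $e$ and $e^\perp$ are orthogonal, the images of $g_z$ and $g_{1-z}$ are orthogonal, so $g(a) := g_z(za) + g_{1-z}((1-z)a)$ defines a $*$-homomorphism $A \to B$ that preserves suprema of projections (each summand does, and they are orthogonal), hence a morphism in $\AWstar$. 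Finally $\ActiveProj(g) = f$, since every projection factors as $p = zp \vee (1-z)p$ with $zp \leq z$ and $(1-z)p \leq 1-z$, giving $g(p) = f(zp) \vee f((1-z)p) = f(p)$ by preservation of this supremum. This exhibits $f$ as $\ActiveProj(g)$, completing the proof.
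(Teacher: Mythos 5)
Your proof is correct, and its skeleton coincides with the paper's: pass to $\EAWstar$ via Proposition~\ref{prop:activelatticesequivalence}, split $A$ by the central projection $z$ isolating the type $\mathrm{I}_2$ summand, apply Proposition~\ref{prop:fullness:typeonetwo} and Corollary~\ref{cor:fullness:typeone} to the two pieces, and glue by an orthogonal sum. The genuine difference is how the restriction of $f$ to the two summands is justified. The paper asserts that $q_i = f(p_i)$ is central in $B$, on the grounds that the symmetry $1-2q_i$ is central in $\Sym(B)$; this is delicate for non-surjective $f$, since equivariance only forces $q_i$ to commute with the image $f(\Sym(A))$, not with all of $\Sym(B)$ (for instance, under the block-diagonal embedding $\M_2(\C) \oplus \M_3(\C) \hookrightarrow \M_5(\C)$, the image of $(1,0)$ is $\diag(1,1,0,0,0)$, which is not central in $\M_5(\C)$, nor is the associated symmetry central in $\Sym(\M_5(\C))$). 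Your corner formulation --- targeting $eBe$ and $e^\perp B e^\perp$, which are AW*-algebras whether or not $e$ is central --- removes this issue entirely, at the price of the work you correctly identify: relative symmetries $z-2p$ of $zA$ are not unitaries of $A$, so the group-homomorphism and equivariance properties must be transported through the embedding $\iota(u)=u+(1-z)$ and then compressed to the $e$-corner. That transport is exactly the content the paper's one-line restriction step leaves implicit, so your route is arguably the more robust of the two; both proofs then conclude with the same gluing $g(a) = g_z(za) + g_{1-z}((1-z)a)$.

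One local repair is needed: for $u \in \Sym(zA)$, the element $f(\iota(u))$ lies in the group $\Sym(B)$, i.e., it is a \emph{product} of symmetries, but it need not be a symmetry itself, so your intermediate claims $f(u)^*=f(u)$ and $f(u)^2=e$ fail for general $u$. They are also unnecessary: the compression argument for multiplicativity only needs $f(u) \in eBe$, which you already have from $u=zuz$, normality of $u$, and piecewise multiplicativity; and once $f$ is known to be multiplicative on $\Sym(zA)$, membership $f(u) \in \Sym(eBe)$ follows because the generators $z-2p$ (with $p \leq z$) are sent by piecewise linearity to the relative symmetries $e-2f(p)$ of $eBe$. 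With the steps reordered this way, your argument goes through verbatim.
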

\begin{proof}
  Proposition~\ref{prop:activelatticesequivalence} allows us to replace
  $\AL$ by $\EAWstar$. As any AW*-algebra, $A$ is a direct sum $A = p_1A \oplus p_2A$
  for central projections $p_1$ and $p_2 = 1-p_1$, where $p_1 A$ is
  a type $\mathrm{I}_2$ AW*-algebra and $p_2 A$ is an AW*-algebra
  without type $\mathrm{I}_2$ summands~\cite[Section~15]{berberian}.
  Because $p_i$ are central in $A$, the symmetries $1-2p_i$ are central in $\Sym(A)$.
  So the projections $q_i = f(p_i)$ are central in $B$, as the symmetries $1-2q_i$ are central in $\Sym(B)$ because $f$ is a morphism in $\AL$.
  Thus $f$ restricts to two morphisms $f_i
  \colon \ActiveProj(p_i A) \to \ActiveProj(q_i B)$ of $\EAWstar$.
  Corollary~\ref{cor:fullness:typeone} provides a  
  morphism $g_1 \colon p_1 A \to q_1 B$ of $\AWstar$ with $\ActiveProj(g_1)=f_1$, and 
  Proposition~\ref{prop:fullness:typeonetwo} provides a morphism $g_2 \colon
  p_2 A \to q_2 B$ of $\AWstar$ with $\ActiveProj(g_2)=f_2$. Their sum
  $g \colon A \to B$, defined by $g(a) = g_1(p_1 a) + g_2(p_2 a)$, is a
  morphism of $\AWstar$ satisfying $\ActiveProj(g)=f$.
\end{proof}

\begin{corollary}
  $\AWstar$ is equivalent to a full subcategory of $\AL$.
\end{corollary}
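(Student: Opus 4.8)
The plan is to deduce this purely formally from the two established properties of the functor $\ActiveProj \colon \AWstar \to \AL$: it is faithful by Lemma~\ref{lem:faithful} and full by Theorem~\ref{thm:fullness}. Since a full and faithful functor is always an equivalence onto its essential image, only a routine packaging of these facts is required.

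First I would let $\mathcal{A} \subseteq \AL$ be the full subcategory whose objects are exactly the active lattices $\ActiveProj(A)$ for $A$ an AW*-algebra. By construction the corestriction of $\ActiveProj$ to a functor $\AWstar \to \mathcal{A}$ is surjective on objects, hence essentially surjective, and it remains faithful by Lemma~\ref{lem:faithful}. For fullness of the corestriction, note that any morphism $\ActiveProj(A) \to \ActiveProj(B)$ in $\mathcal{A}$ is in particular a morphism in $\AL$, and is therefore of the form $\ActiveProj(g)$ by Theorem~\ref{thm:fullness}; thus the corestricted functor is full.

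Finally I would invoke the standard categorical fact that any functor which is full, faithful, and essentially surjective is an equivalence of categories. Applied to $\AWstar \to \mathcal{A}$, this exhibits $\AWstar$ as equivalent to the full subcategory $\mathcal{A}$ of $\AL$, which is precisely the claim. No genuine obstacle remains at this stage: all of the substantive work was carried out in establishing fullness in Theorem~\ref{thm:fullness}, which in turn rested on the structural equivalence of piecewise algebras (Theorem~\ref{thm:piecewiseequivalence}) and the AW*-analogue of Dye's theorem (Theorem~\ref{thm:dye}).
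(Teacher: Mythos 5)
Your proposal is correct and matches the paper's intended argument: the paper's proof is the one-line observation that the claim ``follows directly from Lemma~\ref{lem:faithful} and Theorem~\ref{thm:fullness},'' and your write-up simply spells out the standard packaging (corestrict to the full subcategory on the image objects, then apply full + faithful + essentially surjective $\Rightarrow$ equivalence) that the paper leaves implicit.
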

\begin{proof}
  Follows directly from Lemma~\ref{lem:faithful} and Theorem~\ref{thm:fullness}.
\end{proof}

This corollary immediately presents the problem of characterizing those
active lattices in the essential image of $\ActiveProj$. That is, for which
active lattices $(P,G,\cdot)$ does there exist an AW*-algebra $A$
such that $(P,G,\cdot) \cong \ActiveProj(A)$ as active lattices?
This is a \emph{coordinatization problem,} reminiscent of von Neumann's
coordinatization of continuous geometries by continuous regular
rings~\cite{vonneumann:continuous}. The authors are currently
unaware of any active lattices that are not in the essential image
of $\ActiveProj$. A solution to this problem should provide deeper
insight into how exactly the active lattice $\ActiveProj(A)$ ``encodes''
the ring structure of an AW*-algebra $A$.

We incorporated the symmetry group into $\ActiveProj(A)$ to
circumvent the problem that the product $pq$ of two projections in an
AW*-algebra $A$ is only a projection if $p$ and $q$ commute.
Another way to bypass this shortcoming would be to consider the submonoid
$P(A) \subseteq A$ generated by $\Proj(A)$.
The involution of $A$ restricts to $P(A)$, and this makes $A$ into a
\emph{Baer $*$-semigroup} in the sense of Foulis~\cite{foulis:baer}
(that is, a $*$-semigroup in which the right annihilator of any subset is generated
as a right ideal by a projection).
The assignment $A \mapsto P(A)$ is a functor from $\AWstar$ to the category of
Baer $*$-semigroups with morphisms given by $*$-homomorphisms that preserve
annihilating projections.
This functor is faithful for the same reason given in the proof of Lemma~\ref{lem:faithful}.
Theorem~\ref{thm:fullness} now suggests the natural question: is this functor also full?

In conclusion, our results also suggest the following natural question for
general C*-algebras: can one reconstruct a C*-algebra $A$ from the
piecewise C*-algebra $N(A)$, the unitary group $U(A)$, and the action
by conjugation of the latter on the former?
The following proposition shows that this comes down to a Mackey--Gleason type
problem once again.

\begin{proposition}
  Let $A,B$ be C*-algebras, and $f \colon N(A) \to N(B)$ 
  a morphism of piecewise C*-algebras that restricts to a group homomorphism
  $U(A) \to U(B)$. Then $f$ extends to a $*$-homomorphism $A \to B$
  if and only if it is additive on self-adjoints.
\end{proposition}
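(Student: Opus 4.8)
The two implications are of very different character, and I would treat them separately. The forward implication is immediate: if $g \colon A \to B$ is a $*$-homomorphism extending $f$, then $g$ is additive on all of $A$, and since self-adjoint elements are normal, $g$ agrees with $f$ on them, so $f$ is additive on self-adjoints. All the content lies in the converse, for which the plan is to build the extension explicitly from the Cartesian decomposition and verify its properties one by one.

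Assume $f$ is additive on self-adjoints. Writing each $a \in A$ as $a = h + ik$ with $h = \frac{1}{2}(a + a^*)$ and $k = \frac{1}{2i}(a - a^*)$ self-adjoint, I would define $g \colon A \to B$ by $g(a) = f(h) + i f(k)$. The first thing to check is that $g$ extends $f$: when $a$ is normal, $h$ and $k$ commute, so $h$ and $ik$ are commeasurable normal elements, and piecewise additivity together with the $\C$-linearity of $f$ give $f(a) = f(h) + f(ik) = f(h) + i f(k) = g(a)$. In particular $g$ and $f$ agree on $U(A)$, which will be the pivot of the argument.

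The linear and involutive structure of $g$ is then routine. Additivity of $g$ reduces to additivity of $f$ on self-adjoints applied to real and imaginary parts separately; real homogeneity and the identity $g(ia) = ig(a)$ follow from the $\C$-linearity of $f$ on normal elements, and together these yield full $\C$-linearity of $g$. Since $f$ preserves the involution, $f(h)$ and $f(k)$ are self-adjoint, whence $g(a)^* = f(h) - i f(k) = g(a^*)$.

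The crux is multiplicativity, and I expect this to be the main obstacle: the piecewise structure only governs products of commuting elements, so the genuinely noncommutative information is carried solely by the group-homomorphism hypothesis. The plan is to establish multiplicativity first on unitaries and then bootstrap. For $u, v \in U(A)$ the product $uv$ is again unitary, so $g(uv) = f(uv) = f(u) f(v) = g(u) g(v)$, where the middle equality is exactly that $f$ restricts to a group homomorphism. To pass to arbitrary elements, recall that in a unital C*-algebra every element is a finite linear combination of unitaries: a self-adjoint contraction $h$ equals $\frac{1}{2}(u + u^*)$ for the unitary $u = h + i\sqrt{1 - h^2}$, and a general element splits into its self-adjoint parts and rescales. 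Writing $a = \sum_i \lambda_i u_i$ and $b = \sum_j \mu_j v_j$, the bilinearity of multiplication, the $\C$-linearity of $g$, and multiplicativity on unitaries combine to give $g(ab) = \sum_{i,j} \lambda_i \mu_j\, g(u_i v_j) = \sum_{i,j} \lambda_i \mu_j\, g(u_i) g(v_j) = g(a) g(b)$. Hence $g$ is a $\C$-linear, $*$-preserving, multiplicative map extending $f$, completing the proof.
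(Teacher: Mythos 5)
Your proof is correct and follows essentially the same route as the paper's: defining the extension via the Cartesian decomposition $a = a_1 + ia_2$, deducing $\C$-linearity from additivity on self-adjoints, and obtaining multiplicativity by writing every element as a linear combination of unitaries $h \pm i\sqrt{1-h^2}$ and invoking the group-homomorphism hypothesis. Your write-up is in fact slightly more explicit than the paper's at the final multiplicativity step, which the paper leaves as a one-line assertion.
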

\begin{proof}
  One direction is obvious. For the other, assume that $f$ is
  additive on self-adjoints. Since any $a \in A$ can be written as
  $a=a_1+a_2$ for self-adjoint $a_1=\tfrac{1}{2}(a+a^*)$ and
  $a_2=\tfrac{1}{2i}(a-a^*)$, if $f$ extends to a linear function $f
  \colon A \to B$, then it does so uniquely, by
  $f(a)=f(a_1)+if(a_2)$. First notice that this is well-defined and
  coincides with the given values for $a \in N(A)$, since in that case $a_1
  \commeas a_2$. 

  As $(a+b)_1 = a_1+b_1$ and $(a+b)_2 = a_2+b_2$,
  the assumption makes the extension $f \colon A \to B$
  additive. Next, for $z \in \mathbb{C}$, say $z=x+iy$ for real $x$
  and $y$, compute
  \begin{align*}
    f(za) & = f(xa_1 - ya_2) + if(xa_2 + ya_1), \\
    zf(a) & = f(xa_1) - f(ya_2) + if(xa_2) + if(ya_1).
  \end{align*}
  So the assumption in fact makes the extension $f \colon A \to B$ a
  $\mathbb{C}$-linear function. It also clearly satisfies
  $f(a^*)=f(a)^*$ and $f(1)=1$. 

  Finally, any self-adjoint $a \in A$
  can be written as $a=\tfrac{1}{2}a_++\tfrac{1}{2}a_-$ for unitaries
  $a_\pm = a \pm i\sqrt{1-a^*a}$ that commute with each
  other and $a$. Therefore any element of $A$ is a linear combination
  of four unitaries. Because $f$ restricts to a homomorphism $U(A) \to
  U(B)$, it therefore preserves multiplication on all of $A$.
\end{proof}

One could take into account the action by conjugation of $U(A)$ on
$N(A)$, but it is not clear at all how additionally assuming that
$f(uau^*)=f(u)f(a)f(u)^*$ should guarantee that $f$ is additive on
self-adjoints.

\bibliographystyle{amsplain}
\bibliography{awstar}

\end{document}